\pgfplotsset{compat=1.15}         
\newcolumntype{C}{>{$}c<{$}} 
\renewcommand{\arraystretch}{1.02} 
\newtheorem{theorem}{Theorem}[section]
\theoremstyle{plain}
\newtheorem{corollary}[theorem]{Corollary}
\numberwithin{equation}{section}
\theoremstyle{definition}
\newtheorem{lemma}{Lemma}
\theoremstyle{definition}
\newtheorem{definition}[theorem]{Definition}
\newtheorem{remark}[theorem]{Remark}
\begin{document}
	\title{Representations of Code Loops by Binary Codes}

	\author[Miguel Pires]{Rosemary Miguel Pires}
	\author[Grishkov]{Alexandre Grishkov}
	\author[Rasskazova]{Marina Rasskazova}
	\address[Miguel Pires]{Instituto de Ci\^encias Exatas, Universidade Federal Fluminense, Rua Desembargador Ellis Hermydio Figueira, Aterrado, Volta Redonda, RJ, Brazil, CEP 27213-145 } 
	\email[Pires]{rosemarypires@id.uff.br}
	\address[Grishkov]{Instituto de Matem\'atica e Estat\'istica, Universidade de S\~ao Paulo, Rua do Mat\~ao, 1010 Butant\~a, S\~ao Paulo, SP, Brazil, CEP 05508-090  and Sobolev Institute of Mathematics, Omsk, Russia}
	\email[Grishkov]{shuragri@gmail.com}
	
	\address[Rasskazova]{Centro de Matem\'atica, Computa\c c\~ao  e Cogni\c c\~ao, Universidade Federal do ABC, Avenida dos Estados, 5001, Bairro Bangu, Santo Andr\'e, SP, Brasil,
		CEP 09280-560; and Siberian State Automobile and Highway University, Prospekt Mira, 5, Omsk, Omsk Oblast, Rússia, 644080}
	\email[Rasskazova]{marina.rasskazova@ufabc.edu.br}
	
	\begin{abstract}
		Code loops are Moufang loops constructed from doubly even binary codes. Then, given a code loop $L$, we ask which doubly even binary code $V$ produces $L$. In this sense, $V$ is called a representation of $L$. In this article we define and show how to determine all minimal and reduced representations of nonassociative code loops of rank $3$ and $4$.
	\end{abstract}

	\keywords{code loops, characteristic vectors, representations}
	\subjclass[2010]{Primary: 20N05.}

	\maketitle
	\section{Introduction}
	
	Code loops were introduced by Robert Griess \cite{2} from doubly even binary codes as follows.

	Let $K=\mathbb{F}_{2}=\{0,1\}$ be a field with two elements and $K^n$ be an $n$-dimensional vector space over $K$. Let $u=(u_1,\dots,u_n)$ and $v=(v_1,\dots,v_n)$ in $K^n$ and define $|v|=|\{i\; |\;v_i=1\}|$ (the Hamming weight)  and  $|u \cap v|=|\{i\; |\;u_i=v_i=1\}|.$ A \textit{doubly even binary code} is a subspace $V\subseteq{K^{n}}$ such that $|v|\equiv{0\;}(\mbox{mod}{\;4})$.
	%
	
	Let $V$ be a doubly even binary code and the function $\phi:V\times V\to \{1,-1\}$, called \textit{factor set},  defined by: 
	
	\begin{align}
		\phi(v,v)&=(-1)^{\frac{|v|}{4}}, \nonumber \\
		\phi(v,w)&=(-1)^{\frac{|v{\cap}w|}{2}}\phi(w,v), \nonumber \\
		\phi(0,v)&=\phi(v,0)=1,   \nonumber \\
		\phi(v+w,u)&=\phi(v,w+u)\phi(v,w)\phi(w,u)(-1)^{|v{\cap}w{\cap}u|}. \nonumber
	\end{align}

	In order to define a code loop, let $V$ be a doubly even binary code and \linebreak $\phi:V\times V\to \{1,-1\}$ be a factor set. Consider the set $L(V)= \{1,-1\} \times V $ and define a product `$\cdot $' in $L(V)$ in the following way
	\begin{align}
		&v.w = \phi(v,w)(v + w), \nonumber\\
		&v.(-w) = (-v).w = -(v.w). \nonumber
	\end{align} 
	

	With this product, we obtain a Moufang loop called \textit{code loop}. 
	Equivalently, a Moufang loop $L$ such that $L/A$ is an elementary abelian $2$-group for some central subgroup $A$ of order $2$ is called a code loop. 
	We say that $L(V)$ has \textit{rank} $m$ if the dimension of the $K$-vector space $V$ is equal to $m$. 
	Note that $v \in L(V)$ and $-v \in L(V)$ mean the elements $(1,v)$ and $(-1,v)$, respectively.
	
	It's important to recall that code loops have some useful properties related to the commutator, associator, and square associated with their elements that we will use to get the main results presented in this article. Chein and Goodaire \cite{4} proved that  code loops have a unique nonidentity square, a unique nonidentity commutator, and a unique nonidentity associator. In other words, for any $u,v,w \in V$: 
	\begin{align}
		v^2 &= (-1)^{\frac{|v|}{4}}0, \label{eq1}\\ 
		\left[u,v\right] &= u^{-1}v^{-1}uv = (-1)^{\frac{|u \cap v|}{2}}0,\label{eq2} \\
		(u,v,w)&=((uv)w)( (u(vw))^{-1})=(-1)^{|u\cap v\cap w|}0. \label{eq3}
	\end{align}
	
	Let $V$ be a doubly even binary code and $v \in V$. Note that, if $v^{2}=-1$, then $|v|\equiv{4\;}(\mbox{mod}{\;8})$. Otherwise, we have $|v|\equiv{0\;}(\mbox{mod}{\;8}).$
	
	Now let $u,v \in V$. As a consequence of the definition of doubly even binary code, we obtain $|u\cap v|\equiv{0\;}(\mbox{mod}{\;2})$. Then in the case $[u,v]=-1$, we obtain $|u\cap v|\equiv{2\;}(\mbox{mod}{\;4})$ and in the other case, $|u\cap v|\equiv{0\;}(\mbox{mod}{\;4}).$

	So far, we've shown how to construct code loops from doubly even binary codes, but we would like to investigate the other direction. In other words, given a certain code loop $L$, we would like to determine the doubly even codes $V$ such that $L\simeq L(V)$. 
	
	A {\textit{representation}} of a given code loop $L$ is a doubly even binary code $V \subseteq {K^{m}}$ such that $L \simeq L(V)$ and the number $m$ is called the \textit{degree of the representation}. So the degree of the representation is the length of the code. We notice that there are many different representations for the same code loop. When necessary, we denote the degree of $V$ by $\mbox{deg}(V)$.

	In this article we determine all minimal representations of nonassociative code loops of rank $3$ and $4$ (Sections 3 and 4). We will also show how to determine reduced representations. In order to do this, first, it is necessary to know all these code loops, up to isomorphism. In \cite{AR}, these code loops were classified using the concept of  characteristic vector associated with them. We recall the main theorems of classification of nonassociative code loops of rank $3$ and $4$ in Section 2.
	
	\section{Nonassociative Code Loops of Rank 3 and 4}\label{section2}
	
	Let $L$ be a code loop with generator set $X=\left\{x_{1},\dots,x_{n}\right\}$ and center $\left\{1,-1\right\}$. Then we define the characteristic vector of $L$, denoted by $\lambda_{X}(L)$ or $\lambda(L)$, by \[\lambda(L)=(\lambda_{1},\dots,\lambda_{n};\lambda_{12},\dots,\lambda_{1n},\dots,\lambda_{(n-1)n};\lambda_{123},\dots,\lambda_{12n},\dots,\lambda_{(n-2)(n-1)n}),\] where $\lambda_{i}, \lambda_{ij},\lambda_{ijk} \in \mathbb{F}_{2}$, $(-1)^{\lambda_{i}}=x_{i}^{2},\; (-1)^{\lambda_{ij}}=[x_{i},x_{j}]\; \mbox{and}\;  (-1)^{\lambda_{ijk}}=(x_{i},x_{j},x_{k})$.
	
	
	Here, $[x,y]$ denotes the commutator of $x$ and $y$, and $(x,y,z)$ denotes the 
	associator of $x,y$ and $z$. The center of a loop $L$, denoted by $\mathcal{Z}(L)$, is the set 
	\[
	\mathcal{Z}(L) = \{x \in L \mid [x,y] = (x,y,z) = 1, \ \forall y,z \in L\}.
	\]
	
	Once a generating set $X$ is fixed, we obtain a unique characteristic vector $\lambda_X(L)$, which depends on the choice of the elements in $X$ and on the order of its elements. Different minimal generating sets may produce different characteristic vectors for the same code loop, as illustrated by the following example: 
	
	Let $L = \mathbb{Z}_2 \times \mathbb{Z}_4 = \{0,1\} \times \{0,1,2,3\}$ and let $Z = \{(0,0), (0,2)\}$.
	Then $Z \leq \mathcal{Z}(L)$ (since $L$ is an abelian group),  and $L/Z$ is the Klein group, so $L$ is a code loop of rank $2$.
	We can consider various (ordered) minimal generating subsets of $L$.
	With $X = (a,b) = ((0,1), (1,0))$ we obtain $a+a = (0,2)$ and $b+b = (0,0)$,
	while with $X' = (a', b') = ((0,1), (1,1))$ we obtain $a'+a' = (0,2)$ and $b'+b' = (0,2)$. Note that $b$ has order $2$, while $b'$ has order $4$, so the characteristic vectors associated with each generating set are different.
	
	The fact that the characteristic vector uniquely determines a code loop was established in \cite{AR}. By definition of a characteristic vector, the number of possible such vectors is $2^m$, where $m = n + \frac{n(n-1)}{2} + \frac{n(n-1)(n-2)}{6}$. In \cite{AR}, the authors constructed  a free loop $\mathcal{F}_n$ in the variety generated by code loops and showed how to establish a correspondence between a vector with coordinates in $\mathbb{F}_2$ and a nonassociative code loop.  However, different vectors may determine the same nonassociative code loop up to isomorphism. Moreover, using the natural action of $GL_n(2)$ on the set of characteristic vectors, it was proved that the orbits of this action determine the isomorphism classes of nonassociative code loops. Therefore, there is a bijective correspondence between the set of  isomorphism classes of nonassociative code loops of rank $n$ and the set of $GL_n(2)$-orbits of characteristic vectors of such loops (see \cite[Corollary~2.11, p.~172]{AR} and \cite[Proposition~2.12, p.~172]{AR}).
	

	We can determine a representation $V$ of $L$ just by knowing the characteristic vector $\lambda(L)$ associated with $L$. Indeed, the method consists of finding a basis of $V$ with vectors $v_{1},\dots,v_{n}$ corresponding to $x_{1},\dots,x_{n}$, respectively. The squares, commutators and associators of the generators of $L$ will determine the respective squares, commutators and associators of the basis elements of $V$ and, from Equations (\ref{eq1}), (\ref{eq2}) and (\ref{eq3}), we get the weights of $v_{i}$, $v_{i}\cap v_{j}$ and $v_{i} \cap v_{j} \cap v_{k}$,  that is, $|v_{i}|$, $|v_{i}\cap v_{j}|$ and $|v_{i} \cap v_{j} \cap v_{k}|$, for each $i,j,k \in \left\lbrace 1,\dots,n\right\rbrace $. To determine the possible vectors of a basis of $V$, it is enough to analyze these weights, as we will see in the next section. We usually use the following notation: $t_i=|v_i|$, $t_{ij}=|v_i \cap v_j|$ and $t_{ijk}=|v_{i}\cap v_{j}\cap v_{k}|$.
	
	
	
	Now, we assume that $L$ is a nonassociative code loop of rank $3$ with generators $a,b,c$. Then the associator $(a,b,c)$ is equal to $-1$, otherwise, by Moufang Theorem, $L$ would be a group. The characteristic  vector associated to $L$ is given by $\lambda(L) = (\lambda_1,...,\lambda_6)$ (or simply $\lambda(L) = (\lambda_1...\lambda_6)$) where $a^{2}=(-1)^{\lambda_1}$, $b^{2}=(-1)^{\lambda_2}$, $c^{2}=(-1)^{\lambda_3}$, $\left[a,b\right]=(-1)^{\lambda_4}$, $\left[a,c\right]=(-1)^{\lambda_5}$ and $\left[b,c\right]=(-1)^{\lambda_6}.$ The omitted seventh coordinate is equal to one. 
	
	As discussed above, we know that two code loops with the same characteristic vector are isomorphic. This follows directly from \cite[Corollary 2.11, p.172]{AR} and \cite[Proposition 2.12, p.172]{AR}. To illustrate and justify this in a concrete case, consider the nonassociative code loops of rank~$3$. 
	By definition of the characteristic vector, and since we are considering nonassociative loops of this rank, there are exactly $64$ such vectors corresponding to these loops. According to \cite[Proposition 3.2, p.175]{AR}, these vectors are divided into $5$ equivalence classes (orbits) under the natural action of $GL_3(2)$. Each orbit corresponds to a unique nonassociative code loop given by Theorem~\ref{th2.1} below, where each listed characteristic vector is a representative of the corresponding orbit.
	

	

	\begin{theorem}[\cite{AR}]\label{th2.1}
		Consider $C_{1}^{3},...,C_{5}^{3}$ the code loops with the following characteristic vectors:
		\begin{eqnarray*}
			\begin{array}{lllll}
				\lambda(C_{1}^{3})=(111111), &\,&\lambda(C_{2}^{3})=(000000),&\,&\lambda(C_{3}^{3})=(000111),\\
				&&&&\\
				\lambda(C_{4}^{3})=(110000),&\,&\lambda(C_{5}^{3})=(100000).&\,&\\
			\end{array}
		\end{eqnarray*}
		
		Then any two loops from the list $\left\{C_{1}^{3},...,C_{5}^{3}\right\}$  are not isomorphic and every nonassociative
		code loop of rank $3$ is isomorphic to one of this list.
	\end{theorem}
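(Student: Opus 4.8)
The plan is to translate the statement into a computation of $GL_3(2)$-orbits on characteristic vectors and then to carry out that computation. First, if a code loop of rank $3$ with generating triple $(a,b,c)$ is nonassociative, Moufang's theorem forces its associator $(a,b,c)$ to be $-1$, so the omitted seventh coordinate $\lambda_{123}$ is identically $1$; hence such a loop is recorded by a vector $(\lambda_1,\dots,\lambda_6)\in\mathbb{F}_2^6$, giving at most $64$ isomorphism classes. By the bijection between isomorphism classes of nonassociative code loops of rank $n$ and $GL_n(2)$-orbits of their characteristic vectors (\cite[Corollary~2.11, Proposition~2.12]{AR}), it then suffices to show that $GL_3(2)$ has exactly five orbits on this set of $64$ vectors and that $\lambda(C_1^3),\dots,\lambda(C_5^3)$ are representatives.

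Next I would make the action explicit. Writing $x^2=(-1)^{\sigma(x)}$, $[x,y]=(-1)^{\beta(x,y)}$ and $(x,y,z)=(-1)^{\tau(x,y,z)}$ for $x,y,z$ in $V\cong L/\{\pm1\}$, Equations (\ref{eq1})--(\ref{eq3}), together with $|v+w|=|v|+|w|-2|v\cap w|$ and $|(u+v)\cap w|=|u\cap w|+|v\cap w|-2|u\cap v\cap w|$, give the linearization identities
\begin{align*}
\sigma(x+y)&=\sigma(x)+\sigma(y)+\beta(x,y),\\
\beta(x+y,z)&=\beta(x,z)+\beta(y,z)+\tau(x,y,z),\\
\tau(x+y,z,w)&=\tau(x,z,w)+\tau(y,z,w).
\end{align*}
Thus $\tau$ is an alternating trilinear form on the $3$-dimensional space $V$, hence a multiple of the determinant; since $GL_3(2)=SL_3(2)$, $\tau$ is $GL_3(2)$-invariant, which is exactly why changing the generating triple by $g\in GL_3(2)$ preserves $\lambda_{123}=1$. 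The same identities exhibit $\beta$ as bilinear up to the fixed correction $\tau$ and $\sigma$ as a quadratic form polarizing to $\beta$, and they show that the transformed coordinates $(\lambda_1,\dots,\lambda_6)$ depend on the old ones by an affine map over $\mathbb{F}_2$ whose linear part and translation part are read off from $g$ and from the fixed values of $\tau$.

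With the explicit affine action in hand, I would enumerate the orbits by a complete set of invariants and a counting check. The two basic invariants are the number of elements of order $4$, i.e. $\#\{x\in L:x^2=-1\}=2\cdot\#\{v\in V\setminus\{0\}:\sigma(v)=1\}$, and the order of the subloop these elements generate; one computes the pairs $(14,16),(2,4),(6,8),(10,16),(6,16)$ for $C_1^3,\dots,C_5^3$, so the five representatives are pairwise inequivalent (and hence, by the first paragraph, the loops are pairwise non-isomorphic). It then remains to show these five orbits exhaust the $64$ vectors: one splits according to whether the orbit contains a representative with all commutators trivial — the ``commutative generators'' case producing the classes of $(000000)$, $(100000)$, $(110000)$ according to the $GL_3(2)$-type of $\sigma$, and the complementary case, after a Witt-type normalization of $(\sigma,\beta)$ relative to the fixed $\tau$, producing the classes of $(000111)$ and $(111111)$ — and verifies that the resulting orbit sizes sum to $64$.

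The genuinely technical part is the second and third steps: deriving the linearization identities with the correct $\tau$-corrections (the term that keeps the commutator from being bilinear is precisely what makes this richer than the $\mathbb{F}_2$-theory of quadratic forms), working out the induced affine action explicitly, and then proving completeness of the invariants — that two characteristic vectors with the same order-$4$ data lie in the same $GL_3(2)$-orbit. Once completeness is established, the remaining verification that $\lambda(C_1^3),\dots,\lambda(C_5^3)$ are representatives and that nothing is missed is routine bookkeeping.
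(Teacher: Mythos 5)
Your proposal is essentially correct, but note first that the paper does not prove Theorem~\ref{th2.1} at all: it imports the statement from \cite{AR}, justifying only the general principle that isomorphism classes of nonassociative code loops of rank $n$ correspond to $GL_n(2)$-orbits of characteristic vectors (\cite[Cor.~2.11, Prop.~2.12]{AR}) and citing \cite[Prop.~3.2]{AR} for the fact that the $64$ vectors form exactly five orbits. So your first paragraph reproduces the paper's framing, and the rest is your own reconstruction of the argument of \cite{AR}. That reconstruction checks out where you carry it out: the three linearization identities are correct, $\tau$ is indeed forced to be the determinant form (hence $GL_3(2)$-invariant), and I verified your invariant pairs $(14,16),(2,4),(6,8),(10,16),(6,16)$ for $C_1^3,\dots,C_5^3$; moreover, since the number of order-$4$ elements and the order of the subloop they generate are intrinsic loop invariants, their pairwise distinctness already proves non-isomorphism without appealing to the orbit correspondence, which is cleaner than what you state. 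The step you defer as ``genuinely technical'' --- completeness --- does go through, and more easily than your Witt-type normalization suggests: because $\beta$ is recovered from $\sigma$ by polarization, $\beta(x,y)=\sigma(x+y)+\sigma(x)+\sigma(y)$, a characteristic vector is the same datum as the function $\sigma$ on the seven nonzero vectors of $V$, subject only to the condition that its values sum to $1$ (equivalently, that its third derivative is $\det$); that is, an odd-cardinality subset of the Fano plane. The $GL_3(2)$-action is then the natural action of the collineation group on point sets, whose orbits on odd subsets are: the $7$ singletons, the $7$ lines, the $28$ non-collinear triples, the $21$ five-point sets (complements of pairs, on which the group acts transitively by $2$-transitivity), and the full plane, giving $7+7+28+21+1=64$ and exactly five orbits. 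This both closes your deferred step and confirms that your two invariants form a complete set, with the size-$3$ case (line versus triangle) accounting for the pair $C_3^3$ versus $C_5^3$ that the order-$4$ count alone cannot separate.
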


	From the above classification, we can now describe the congruence classes of 
	$t_i$, $t_{ij}$, and $t_{ijk}$ for each loop, which encode the intersection 
	properties of the basis vectors in a representation. 
	The values listed in the next remark correspond to the case of code loops of rank~$3$.

	\begin{remark}\label{rem2.2}
		According to Theorem \ref{th2.1} and Equations (\ref{eq1}), (\ref{eq2}) and (\ref{eq3}), any representation of the loops  $C_{1}^{3},...,C_{5}^{3}$ has a basis $\left\lbrace v_{1},v_{2},v_{3}\right\rbrace $ that satisfies the following properties, respectively:
		
		\begin{enumerate}
			\item[1.] $t_1\equiv t_2\equiv t_3\equiv4\;(\mbox{mod} \; 8)$, $t_{12}\equiv t_{13}\equiv t_{23}\equiv2\;(\mbox{mod}\;4)$;
			\item[2.] $t_{1}\equiv t_{2}\equiv t_{3}\equiv0\;(\mbox{mod} \; 8)$, $t_{12}\equiv t_{13}\equiv t_{23}\equiv0\;(\mbox{mod}\;4)$;
			\item[3.] $t_{1}\equiv t_{2}\equiv t_{3}\equiv0\;(\mbox{mod} \; 8)$, $t_{12}\equiv t_{13}\equiv t_{13}\equiv2\;(\mbox{mod}\;\;4)$;
			\item[4.] $t_{1}\equiv t_{2}\equiv4\;(\mbox{mod}\; 8)$, $t_{3}\equiv0\;(\mbox{mod}\;8)$, $t_{12}\equiv t_{13}\equiv t_{23}\equiv0\;(\mbox{mod}\;4)$;
			\item[5.] $t_{1}\equiv4\;(\mbox{mod}\; 8)$, $t_{2}\equiv t_{3}\equiv0\;(\mbox{mod}\;8)$, $t_{12}\equiv t_{13}\equiv t_{23}\equiv0\;(\mbox{mod}\;4)$.
		\end{enumerate}
		For any representation, since $(v_{1},v_{2},v_{3})=-1$, we also have that $t_{123}\equiv{1\;}(\mbox{mod}{\;2})$. 
		Therefore it doesn't happen $v_{j}\subset v_{k}$, $j\neq k$ or $v_{j}\cap v_{k} = \emptyset$. If $v_{1}\subset v_{2}$, for example, we would have $t_{123}\equiv t_{13}\equiv {0\;}(\mbox{mod}{\;2}),$ which would be absurd because $t_{123}$ is odd. If $v_{2}\cap v_{3} = \emptyset$, for example, we would have $t_{123}\equiv t_{1}\equiv {0\;}(\mbox{mod}{\;4})$, which also can not happen.

	\end{remark}

	Let $L$ be a nonassociative code loop of rank $4$. By Lemma 3.4 of \cite{AR}, we can assume that $L$ has a generating set $X=\left\{a,b,c,d\right\}$  such that the nucleus of $L$ is generated by $d$, that is,  $N(L)=\mathbb{F}_{2}d$. Since $L$ is  nonassociative, we have $(a,b,c)=-1$.  By definition, the characteristic vector associated to $L$ has 14 coordinates. However, since  $(a,b,d)=(a,c,d)=(b,c,d)=1$ and $(a,b,c)=-1$, 
	we can omit the coordinates corresponding to the values of these associators and thus consider the characteristic vector of $L$  of the form $\lambda_{X}(L)=(\lambda_{1},\dots,\lambda_{10})$ (or simply $\lambda(L)=(\lambda_{1}\dots\lambda_{10})$), where
	
	
	\[
	\begin{aligned}
		a^{2} &= (-1)^{\lambda_1}, & b^{2} &= (-1)^{\lambda_2}, & c^{2} &= (-1)^{\lambda_3}, & d^{2} &= (-1)^{\lambda_4}, \\
		\left[a,b\right] &= (-1)^{\lambda_5}, & \left[a,c\right] &= (-1)^{\lambda_6}, & \left[a,d\right] &= (-1)^{\lambda_7}, \\
		\left[b,c\right] &= (-1)^{\lambda_8}, & \left[b,d\right] &= (-1)^{\lambda_9}, & \left[c,d\right] &= (-1)^{\lambda_{10}}.
	\end{aligned}
	\]


	Similarly to the rank~$3$ case, the set of characteristic vectors of nonassociative code loops of rank~$4$ can be partitioned into orbits under the natural action of $GL_4(2)$, each corresponding to a unique loop up to isomorphism~\cite{8}.
	Theorem~\ref{th2.3} below lists one representative from each orbit and thus provides the classification of code loops of rank~$4$.
	
	\begin{theorem}[\cite{AR}]\label{th2.3}
		Consider $C_{1}^{4},...,C_{16}^{4}$ the code loops with the following characteristic vectors.
		Every nonassociative code loop of rank $4$ is isomorphic to one from the list. Moreover, no two of those loops are isomorphic to each other.
		\begin{table}[H]
			\centering
			{\begin{tabular}{llll|lllll|l}
					&  &  & $L$ & $\lambda(L)$ &  &  &  & $L$ & $\lambda(L)$ \\
					\cline{4-5}\cline{9-10}
					&  &  & $C^{4}_{1}$ & $(1110110100)$ &  &  &  & $C^{4}_{9}$ & $(0100001000)$ \\
					&  &  & $C^{4}_{2}$ & $(0000000000)$ &  &  &  & $C^{4}_{10}$ & $(0001111000)$ \\
					&  &  & $C^{4}_{3}$ & $(0000110100)$ &  &  &  & $C^{4}_{11}$ & $(0001001000)$ \\
					&  &  & $C^{4}_{4}$ & $(0010100000)$ &  &  &  & $C^{4}_{12}$ & $(0000001100)$ \\
					&  &  & $C^{4}_{5}$ & $(0000010100)$ &  &  &  & $C^{4}_{13}$ & $(0110111100)$ \\
					&  &  & $C^{4}_{6}$ & $(1111110100)$ &  &  &  & $C^{4}_{14}$ & $(0001001100)$ \\
					&  &  & $C^{4}_{7}$ & $(0001000000)$ &  &  &  & $C^{4}_{15}$ & $(1001001100)$ \\
					&  &  & $C^{4}_{8}$ & $(0000001000)$ &  &  &  & $C^{4}_{16}$ & $(0001111100)$ \\
			\end{tabular}}
			\label{tabvc4}
		\end{table}
		
	\end{theorem}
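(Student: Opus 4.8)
Since the statement is a classification up to isomorphism, the correspondence recalled above (\cite[Corollary~2.11]{AR} and \cite[Proposition~2.12]{AR}) reduces it to a finite problem: it suffices to show that the natural action of $GL_4(2)$ on the set of characteristic vectors of nonassociative rank-$4$ code loops has exactly $16$ orbits, with the tabulated vectors as representatives. My plan has three steps: normalize the search space, make the relevant group action explicit, and then enumerate the orbits while certifying that the representatives are pairwise inequivalent.

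First I would normalize. A $14$-coordinate vector always determines a code loop via the free-loop construction of \cite{AR}, and that loop is nonassociative exactly when its associator trilinear form is nonzero, i.e. when the four associator coordinates are not all $0$ --- this uses Moufang's theorem together with the additivity of the associator in a code loop (a consequence of the factor-set identity for $\phi(v+w,u)$ stated in the Introduction). By Lemma~3.4 of \cite{AR} every nonassociative rank-$4$ code loop admits a generating set $\{a,b,c,d\}$ with $N(L)=\mathbb{F}_2 d$, for which $(a,b,d)=(a,c,d)=(b,c,d)=1$ and $(a,b,c)=-1$. Hence every $GL_4(2)$-orbit meets the set $S_0$ of vectors in the normalized form $\lambda=(\lambda_1\dots\lambda_{10})$ fixed just before the theorem, parametrized by at most $2^{10}$ bit strings.

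Next I would make the action explicit. Two normalized vectors are $GL_4(2)$-equivalent if and only if they lie in one orbit of the subgroup $G\le GL_4(2)$ preserving the normalization, since any isomorphism between loops in normal form must carry one nucleus onto the other; concretely $G$ is (up to the extra condition that keeps $(a,b,c)=-1$ normalized) the stabilizer of the nucleus vector $\bar d$, a maximal parabolic of $GL_4(2)$. The technical core is to describe how a basis change $x_i\mapsto\sum_j c_{ij}x_j$ in $G$ acts on the ten coordinates. Because $(x+y)^2$, $[x+y,z]$, and $(x+y,z,w)$ each acquire cross-terms controlled by the factor-set relations, the induced map on $\mathbb{F}_2^{10}$ is affine-quadratic rather than linear; equivalently, one must record how the weights $t_i$, $t_{ij}$, $t_{ijk}$ of a basis transform under $G$, in the spirit of Remark~\ref{rem2.2}.

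Finally, with $G$ and its action in hand, the remaining step is a finite computation: enumerate the $G$-orbits on $S_0$, confirm there are exactly $16$, and check that the $16$ listed vectors land in pairwise distinct orbits and exhaust them. I would certify non-equivalence by computing $GL_4(2)$-invariants directly from each $\lambda$ --- for instance the isomorphism type and exponent of the associated loop, the multiset of element orders, the dimensions of the centre and of the nucleus, the rank of the commutator bilinear form, and the rank of the squaring map. I expect the main obstacle to be organizational rather than conceptual: deriving the transformation formulas in the previous step without error, and then, among the $16$ classes, separating any two that agree on all the coarse invariants, which may force a finer invariant or a direct verification. In effect, this is the computation carried out in \cite{AR} and \cite{8}.
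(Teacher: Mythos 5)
The paper itself offers no proof of Theorem~\ref{th2.3}: it is imported verbatim from \cite{AR} and \cite{8}, with the surrounding text only explaining that isomorphism classes correspond to $GL_4(2)$-orbits of characteristic vectors (via \cite[Corollary~2.11]{AR} and \cite[Proposition~2.12]{AR}) and that the table lists one representative per orbit. Your framework matches that exactly --- the reduction to orbits, the normalization via Lemma~3.4 of \cite{AR} so that $N(L)=\mathbb{F}_2 d$ and the four associator coordinates are fixed, and the passage to the parabolic stabilizer of $\bar d$. So you have reconstructed the right strategy.

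Two concrete criticisms. First, as written this is a programme, not a proof: the entire content of the theorem is the assertion that the orbit count is exactly $16$ and that the sixteen listed vectors are pairwise inequivalent, and both of these are only promised (``enumerate\dots confirm there are exactly $16$\dots check''). Nothing in your text would let a reader verify even one instance, e.g.\ that $(0001001100)$ and $(1001001100)$ are inequivalent, so the load-bearing step is missing. Second, your one technical claim about the action is wrong: the induced action of $GL_4(2)$ on the characteristic coordinates is \emph{linear}, not ``affine-quadratic''. This follows from the inclusion--exclusion identities $|u+v|=|u|+|v|-2|u\cap v|$, $|(u+v)\cap w|=|u\cap w|+|v\cap w|-2|u\cap v\cap w|$ and $|(u+v)\cap w\cap z|=|u\cap w\cap z|+|v\cap w\cap z|-2|u\cap v\cap w\cap z|$, which upon dividing by $4$, $2$, $1$ and reducing mod $2$ give $\lambda'_{i}=\lambda_{j}+\lambda_{k}+\lambda_{jk}$, $\lambda'_{ij}=\lambda_{ik}+\lambda_{jk}+\lambda_{ijk}$, and an analogous linear rule for associator coordinates; equivalently, in the loop, $(xy)^2=x^2y^2[x,y]$, $[xy,z]=[x,z][y,z](x,y,z)$ and $(xy,z,w)=(x,z,w)(y,z,w)$ with all values central. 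This actually makes the orbit computation easier than you anticipate (it is a genuine linear representation of $GL_4(2)$ on $\mathbb{F}_2^{14}$), but the error signals that the transformation formulas --- the part you yourself flag as the main source of risk --- have not been derived. To turn this into a proof you would need either to carry out the enumeration or to exhibit, for each pair of listed vectors, an explicit $GL_4(2)$-invariant separating them, and neither is present.
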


	The next remark lists the congruence classes of $t_i$, $t_{ij}$, and $t_{ijk}$ that will be used to calculate representations of code loops of rank~$4$. 
	
	\begin{remark}\label{rem2.4}
		As a consequence of the Theorem \ref{th2.3} we will obtain, for each code loop, representations with basis $\left\lbrace v_{1},v_{2},v_{3},v_{4}\right\rbrace $ that satisfies the following properties:
		
		\begin{itemize}
			\item If $\lambda_{i}=0$, then  $t_i\equiv 0\;(\mbox{mod} \; 8)$. Otherwise, we have $t_i\equiv 4\;(\mbox{mod} \; 8)$.
			\item If $\lambda_{ij}=0$, then  $t_{ij}\equiv 0\;(\mbox{mod} \; 4)$. Otherwise, we have $t_{ij}\equiv 2\;(\mbox{mod} \; 4)$.
			\item $t_{123}\equiv{1\;}(\mbox{mod}{\;2})$ and $t_{ij4}\equiv{0\;}(\mbox{mod}{\;2}) .$
		\end{itemize} 
		
	\end{remark}
	
		
	
	\begin{remark}\label{rem2.final}
		From the characteristic vector we do not obtain the absolute values of the weights 
		$t_i$ and $t_{ij}$, but only their congruence classes: 
		$t_i \pmod{8}$ from~(\ref{eq1}) and $t_{ij} \pmod{4}$ from~(\ref{eq2}). 
		These congruences provide information about the vectors in a representation $V$ and 
		the properties of their intersections. The possible congruence classes for 
		$t_i$, $t_{ij}$ and $t_{ijk}$ for the code loops of rank $3$ and $4$ are listed in 
		Remarks~\ref{rem2.2} and~\ref{rem2.4}. 
		The exact values of these weights are analyzed in the next sections, where we 
		consider all possibilities (up to a fixed bound) compatible with the above 
		congruences, in order to determine the minimal representations.
	\end{remark}
	
	\section{Representations of code loops}

	
	Let $L$ be a code loop of rank $n$ with generators $B=\left\lbrace x_1,\cdots, x_n\right\rbrace $ and $V= \break Span_{\mathbb{F}_{2}}\left\lbrace v_{1},\cdots, v_{n}\right\rbrace $ a representation of $L$ with degree $m$. As in \cite{AR} we identify the $\mathbb{F}_{2}-$space  $\mathbb{F}_{2}^m$ with $P_{m}=\left\lbrace  \sigma \subseteq I_m \right\rbrace  $, then $v_i \subseteq I_m$. Let suppose that $\displaystyle\bigcup_{i=1}^{n}v_{i}=I_m$. Then we have an equivalence on $I_{m}: i \sim j \iff \left\lbrace i \in v_{p} \iff j \in v_{p} \right\rbrace $. Consider the corresponding partition of $I_m$ in equivalence classes: $I_m= \displaystyle\bigcup_{k=1}^{t}I_{m}^{k}$.

	The same partition we can obtain in the following way. Let $\sigma \subseteq I_n$ and 
	\begin{equation}\label{lemmaequivalenceclasses}
		v^{\sigma}=\left( \displaystyle\bigcap_{i \in \sigma}v_{i}\right)\setminus \left(\displaystyle\bigcup_{j \notin \sigma}v_{j}\right).
	\end{equation}

	\begin{lemma}\label{lemma3.1}
		In notation above, we have:
		
		
		\begin{enumerate}[label=\roman*)]
			\item  $v^\sigma \cap v^{\tau}=\emptyset$, \, $\sigma \neq \tau $;
			\item  $\displaystyle\bigcup_{\sigma \subseteq I_{n}}v^{\sigma}=I_{m}$;
			\item  $\left\lbrace v^{\sigma} \neq \emptyset \, | \,\sigma \subseteq I_{n} \right\rbrace = \left\lbrace I_{m}^{k}\,|\,k=1,\cdots,t\right\rbrace  $.
		\end{enumerate}
	\end{lemma}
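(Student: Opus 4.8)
The plan is to prove the three statements in order, since (iii) follows easily once (i) and (ii) are established. Throughout, recall that each $v^{\sigma}$ is defined by equation~(\ref{lemmaequivalenceclasses}) as the set of indices belonging to $v_i$ for every $i \in \sigma$ and to $v_j$ for no $j \notin \sigma$.

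For (i), I would argue by contrapositive. Suppose $x \in v^{\sigma} \cap v^{\tau}$ with $\sigma \neq \tau$; without loss of generality pick an index $p$ with $p \in \sigma$ but $p \notin \tau$. From $x \in v^{\sigma}$ and $p \in \sigma$ we get $x \in v_p$, while from $x \in v^{\tau}$ and $p \notin \tau$ we get $x \notin v_p$, a contradiction. Hence distinct $\sigma, \tau$ give disjoint $v^{\sigma}, v^{\tau}$.

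For (ii), the inclusion $\bigcup_{\sigma \subseteq I_n} v^{\sigma} \subseteq I_m$ is immediate since each $v^{\sigma} \subseteq \bigcap_{i \in \sigma} v_i \subseteq I_m$ (and $v^{\emptyset} \subseteq I_m$ trivially). For the reverse inclusion, take any $x \in I_m = \bigcup_{i=1}^{n} v_i$ and set $\sigma_x = \{\, i \in I_n \mid x \in v_i \,\}$. Then by construction $x \in v_i$ for all $i \in \sigma_x$ and $x \notin v_j$ for all $j \notin \sigma_x$, so $x \in v^{\sigma_x}$. Thus every element of $I_m$ lies in some $v^{\sigma}$, and (ii) follows. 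Note $\sigma_x$ is exactly the "pattern" that defines the equivalence $\sim$ on $I_m$: two indices are $\sim$-equivalent iff they have the same pattern $\sigma_x$.

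For (iii), observe that by (i) and (ii) the nonempty sets among $\{v^{\sigma} \mid \sigma \subseteq I_n\}$ form a partition of $I_m$. On the other hand, the sets $I_m^k$ also form a partition of $I_m$, namely the one induced by the equivalence $\sim$. It suffices to check the two partitions coincide blockwise: $x \sim y$ iff $x$ and $y$ lie in exactly the same $v_p$'s, i.e. iff $\sigma_x = \sigma_y$, i.e. (by the computation in (ii)) iff $x$ and $y$ belong to the same set $v^{\sigma}$. So the $\sim$-classes are precisely the nonempty $v^{\sigma}$, which is the asserted equality of sets. The main (and only mild) subtlety is bookkeeping the empty cases correctly—ensuring $v^{\emptyset}$ and empty intersections are handled—but no real obstacle arises; the argument is a direct unwinding of the definitions.
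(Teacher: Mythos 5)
Your proposal is correct and follows essentially the same route as the paper: part (i) by picking $p\in\sigma\setminus\tau$ and deriving $x\in v_p$ and $x\notin v_p$, part (ii) by assigning to each $x\in I_m$ the set $\sigma_x=\{i\mid x\in v_i\}$, and part (iii) by identifying the nonempty $v^{\sigma}$ with the $\sim$-classes. Your phrasing of (iii) as a comparison of two partitions, using (i) and (ii) explicitly, is a slightly cleaner packaging of the paper's two-inclusion argument, but the underlying content is identical.
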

	\begin{proof}
		\par \noindent
		\begin{enumerate}
			\item [i)] Let $i \in v^{\sigma} \cap v^{\tau}$, and $j\in \sigma \setminus \tau$. Hence, by (\ref{lemmaequivalenceclasses}), $i \in v_{k}$, for all $k \in \sigma$. Since $j \neq \tau$, then $i \notin v_{j}$ by (\ref{lemmaequivalenceclasses}). As $j \in \sigma$ we get a contradiction.
			
			\item [ii)]  Let $i \in I_{n}$, and define $\sigma=\left\lbrace k \in I_{n}\,|\,i \in v_{k} \right\rbrace $. Then for $q \notin \sigma$ we have $i \notin v_{q}$, and $i \in v^{\sigma}$.
			
			\item [iii)] Let $i \sim j$ and $i \in v^{\sigma}$, then $i \in v_{k}$, $k \in \sigma$, hence $j \in v_{k}$. Since $i \in v^{\sigma}$, $i \notin v_{p}$, $p \notin \sigma$. As $i \sim j$, $j \notin v_{p}$. Hence $j \in v^{\sigma}$. It means that $I_{m}^{k} \subseteq v^{\sigma}$, if $i,j \in I_{m}^{k}$. Let $k \notin I_{m}^{k}$, $k \in v^{\sigma}$, then there exists $v_{q}$ such that $i \notin v_{q}$, and $j \notin v_{q}$, $k \in v_{q}$. By $k \in v^{\sigma}$, $k \in v_{q}$ and (\ref{lemmaequivalenceclasses}) we get that $q \in \sigma$. On the other hand, since $i \notin v_{q}$ and $i \in v^{\sigma}$ hence $q \notin \sigma$. Contradiction.

		\end{enumerate}
	\end{proof}

	\begin{lemma}\label{ind.generators}The partition $I_{m}=\displaystyle\bigcup_{k=1}^{t}I_{m}^{k}$ does not dependent on the choise of the system of generators $v_{1},\cdots,v_{n}$.
	\end{lemma}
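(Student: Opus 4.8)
The plan is to show that the equivalence relation $\sim$ on $I_m$ — and hence the partition it induces — can be described purely in terms of the subspace $V$, with no reference to any chosen spanning set. Concretely, I would prove that for $i,j \in I_m$ one has $i \sim j$ (with respect to $v_1,\dots,v_n$) if and only if, for every $w \in V$, $i \in w \iff j \in w$. Once this intrinsic characterization is in hand, the statement is immediate: any second generating system $v_1',\dots,v_n'$ of $V$ produces an equivalence relation admitting the very same description, so the two relations coincide and so do the associated partitions $\{I_m^k\}$.

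First I would record the easy inclusion: if $i$ and $j$ lie in the same member of every $w \in V$, then taking $w = v_p$ (which lies in $V$) shows they lie in the same $v_p$'s, i.e. $i \sim j$. For the converse, suppose $i \sim j$ and take an arbitrary $w \in V$. Since $\{v_1,\dots,v_n\}$ spans $V$, write $w = \sum_{p \in \sigma} v_p$ for some $\sigma \subseteq I_n$; under the identification of $\mathbb{F}_2^m$ with $P_m$, addition is symmetric difference, so a point $k$ lies in $w$ precisely when $k$ belongs to an odd number of the sets $v_p$, $p \in \sigma$. Since $i \sim j$ means $\{p : i \in v_p\} = \{p : j \in v_p\}$, the sets $\{p \in \sigma : i \in v_p\}$ and $\{p \in \sigma : j \in v_p\}$ coincide, so they have the same parity, and therefore $i \in w \iff j \in w$. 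This establishes the characterization.

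Finally I would complete the argument: given any other generating system $v_1',\dots,v_n'$ of $V$ with corresponding equivalence relation $\sim'$, the characterization just proved shows that both $\sim$ and $\sim'$ equal the relation ``$i$ and $j$ belong to the same members of $V$'', hence $\sim\, =\, \sim'$ and the partitions agree. For consistency I would also note that the standing hypothesis $\bigcup_{i} v_i = I_m$ is itself independent of the generating system: if $k \in w$ for some $w = \sum_{p \in \sigma} v_p \in V$, then $k$ lies in at least one $v_p$, so $\bigcup_{w \in V} w = \bigcup_{i} v_i$ for every spanning set of $V$. There is no real obstacle here; the only point requiring a moment's care is the parity computation showing that membership of a point in $w = \sum_{p \in \sigma} v_p$ depends solely on which $v_p$'s contain it — which is precisely what forces $\sim$ to depend on $V$ alone.
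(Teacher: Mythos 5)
Your proposal is correct and follows essentially the same route as the paper: both reduce the claim to showing that the equivalence relation determined by the generators coincides with the intrinsic relation ``$i$ and $j$ lie in the same members of $V$,'' the nontrivial inclusion being that membership of a point in $w=\sum_{p\in\sigma}v_p$ depends only on which $v_p$ contain it. The only cosmetic difference is that you establish this via a direct parity count for the symmetric difference, whereas the paper runs an induction on $|\sigma|$; the two arguments are interchangeable.
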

	\begin{proof} We have $L=V\cup -V$, where $V=Span_{\mathbb{F}_{2}}\left\lbrace v_{1},\cdots,v_{n}\right\rbrace $. Recall that in $V$ the operation is $v+w = \left(v\setminus w \right) \cup \left(w\setminus v \right) $. Let $v \in V$ and $i \in v\cap I_{m}^{k}$. We need to prove that $I_{m}^{k}\subseteq v$. Suppose that $j \in I_{m}^{k}\setminus v$. Let $v=\displaystyle\sum_{k \in \sigma \subseteq I_{k}}v_{k}$. If $|\sigma|=1$, then $v=v_{k}$, hence $i \in v$, $j \notin v$ and $i\not\sim j$. Contradiction with $i,j \in I_{m}^{k}$.
		
		Let $|\sigma|=s$ and for all $|\tau|< s$ we have $i \in v_{\tau}=\displaystyle\sum_{k \in \tau}v_{k}$ if and only if $j \in v_{\tau}$.
		
		We have $v_{\sigma}=v_{\tau}+v_{l}$, $\tau = \sigma \setminus l$, $|\tau|<s$. Let $i \in v_{\sigma}$ and $i\in v_{\tau}$, then $i \notin v_{l}$, since $v_{\sigma}=\left( v_{\tau}\setminus v_{l} \right) \cup \left(v_{l}\setminus v_{\tau} \right) $. By $i \sim j$ we get $j \notin v_{l}$ and $j \in v_{\tau}$, hence $j \in v_{\sigma}$. Let $i \in v_{\sigma}$, $i \notin v_{\tau}$, then $i \in v_{l}$. By induction $j \in v_{l}$ and $j \notin v_{\tau}$. Hence $j \in v_{\sigma}$.
	\end{proof}

	We will consider only representations such that, for any equivalence class $X$, we have $|X| < 8$. The choice is arbitrary and it only serves the purpose of making the enumeration of all reduced representations tractable. We call these representations by \textbf{reduced representations}. 
	
	\begin{definition} A reduced representation $V$ is \textbf{minimal} if whenever $L(V)$ is isomorphic to $L(W)$ then $\deg(V) \leq \deg(W)$, where $W$ is another representation.
	\end{definition}
	
	By (\ref{lemmaequivalenceclasses}), we will get all the equivalence classes on $I_{m}$ and we will show that it's possible to find all reduced and minimal representations of degree $m$ for code loops of rank $3$ and $4$ using this classes. To do this, it is necessary to find the cardinality of this equivalence classes and show how this values can be obtained just by knowing the weights, $t_{i}, t_{ij}$ and $t_{ijk}$, with $i,j,k \in \left\lbrace 1,2,3,4\right\rbrace $.  For each choice of these weights, there is a different representation. This choice must be in accordance with the properties mentioned in the  Remark \ref{rem2.2} and \ref{rem2.4}.

	\begin{definition}
		Let $V$ be a $m$-degree representation of a code loop $L$ and  $X_{1},...,X_{r}$ the equivalence classes on $I_m$. We define the \textbf{type} of $V$ as the vector $(|X_{1}|,...,|X_{r}|)$ such that $|X_{1}| \leq |X_{2}| \leq ...\leq |X_{r}|$. When necessary, we can write $(|X_{1}|...|X_{r}|)$.
	\end{definition} 
	
	\begin{definition}
		Let $V_{1}$ and $V_{2}$ be doubly even binary codes of ${\mathbb F}_{2}^{m}$. We say that $V_{1}$ and $V_{2}$ are  equivalent codes if and only if there is a bijection $\varphi \in S_{m}$ such that $V_{1}^{\varphi}=V_{2}$.
	\end{definition}
	
	\subsection{Code Loops of rank $3$}\label{sec3.1}
	
	Let $C_{i}^{3}$, $i=1,\dots,5,$ be the nonassociative code loops of rank $3$ and $\lambda(C_{i}^{3})$ the associated characteristic vector  as shown in Section \ref{section2}. We also denote the corresponding representations by $V_{i}^{3}$ and assume that $V_{i}^{3} = Span_{\mathbb{F}_{2}}\left\lbrace v_{1},v_{2},v_{3}\right\rbrace$.

	Let $m$ be the degree of $V_{i}^{3}$ and $I_{m}=\{1,2,\dots,m\}$, such that $v_1, v_2, v_3 \subset I_{m}$ and $v_{1}\cup v_{2}\cup v_{3} = I_m$. According to Lemma \ref{lemma3.1}, the sets bellow determine all equivalence classes on $I_{m}$ (see Fig. \ref{figura1}):
	\[X_{123}=v_{1}\cap v_{2}\cap v_{3},\,\, X_{12}=(v_{1}\cap v_{2})\setminus v_{3}, \,\,X_{13}=(v_{1}\cap v_{3})\setminus v_{2},\,\,X_{23}=(v_{2}\cap v_{3})\setminus v_{1},\] \[X_{1}=v_{1}\setminus(v_{2}\cup v_{3}),\,\, X_{2}=v_{2}\setminus(v_{1}\cup v_{3}),\,\,  X_{3}=v_{3}\setminus(v_{1}\cup v_{2}).\]

	
	\begin{figure}[h!]
		\centering
		\includegraphics[width=0.4\linewidth]{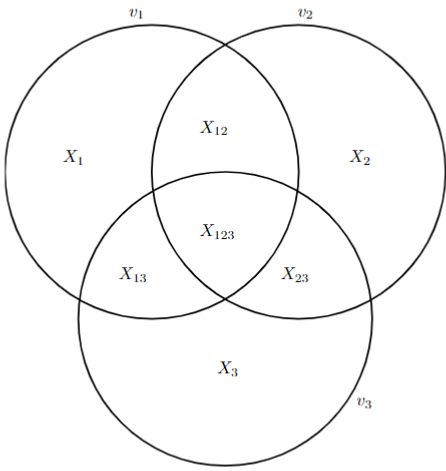}
		\caption{Equivalence classes on $I_{m}$ for code loops of rank $3$}
		\label{figura1}
	\end{figure}

	Now, note that the generators of $V_{i}^{3}$ depend on the equivalence classes since we can write them as follows:
	\[
	v_{1}=X_{123}\cup X_{12}\cup X_{13} \cup X_{1},
	v_{2}=X_{123}\cup X_{12}\cup X_{23} \cup X_{2}, 
	v_{3}=X_{123}\cup X_{13}\cup X_{23} \cup X_{3}.
	\]

	So if we find the equivalence classes on $I_m$, we will determine the generators of the representation. As the elements of the representation depend on the weights of $v_i$, $v_{i}\cap v_{j}$ $(i,j \in \left\lbrace 1,2,3 \right\rbrace )$ and $v_{1}\cap v_{2}\cap v_{3}$, we can establish a relationship between the cardinalities of the classes and these weights.

	Denote the cardinalities of the sets $X_{123}$, $X_{ij}$ and $X_{i}$ by $x_{123}$, $x_{ij}$ and $x_{i}$,  respectively; $i,j=1,2,3$, and assume that $x_{123}=t_{123}$. These cardinalities and the weights are related through the following system in the variables $x_{12}$, $x_{13}$, $x_{23}$, $x_{1}$, $x_{2}$ and $x_{3}$:

	\begin{equation}\label{eq.3.2}\left\lbrace 
		\begin{array}{l}
			t_{123}+x_{12}=t_{12}\\
			t_{123}+x_{13}=t_{13}\\
			t_{123}+x_{23}=t_{23}\\
			t_{123}+x_{12}+x_{13}+x_1=t_{1}\\
			t_{123}+x_{12}+x_{23}+x_2=t_{2}\\
			t_{123}+x_{13}+x_{23}+x_3=t_{3}\\
		\end{array}
		\right.
	\end{equation}
	
	Then the solutions are given by:

	\begin{equation}\label{}\left\lbrace
		\begin{array}{l}
			x_{12}=t_{12} - t_{123}\\
			x_{13}=t_{13} - t_{123}\\
			x_{23}=t_{23} - t_{123}\\
			x_{1}=t_{1} - t_{12} - t_{13} + t_{123}\\
			x_{2}=t_{2} - t_{12} - t_{23} + t_{123}\\
			x_{3}=t_{3} - t_{13} - t_{23} + t_{123}\\
		\end{array}
		\right.
	\end{equation}

	Therefore, to determine a representation of a code loop of rank 3, first  choose values for $t_{123}$, $t_{ij}$ and $t_{i}$, $i,j=1,2,3$; after look for the solution for the Linear System (\ref{eq.3.2}). Since we want equivalence classes whose cardinality is strictly less than 8, we consider  solutions $v=(x_{12}, x_{13}, x_{23}, x_1, x_2, x_3)$ such that $0 \leq x_{i} \leq 7$ and $0 \leq x_{ij} \leq 7$, with $i,j=1,2, 3$,

	\begin{remark} It follows from 7 equivalence classes of at most 7 elements that:
		\begin{itemize}
			\item $t_{123}=1,3,5$ or $7$;
			\item $t_{ij}=2,4,6,8,10,12$ or $14$; 
			\item $t_{i}=4,8,12,16,20,24$ or $28$;
			\item $7\leq m \leq 49$.
		\end{itemize}
		
	\end{remark}

	Note that the degree $m$ and the type of $V_{i}^{3}$ are obtained from these soluções.  In fact, $m=x_{123}+x_{12}+x_{13}+x_{23}+x_{1}+x_{2}+x_{3}$ and the type is a vector determined by putting the coordinates of $(x_{123}, x_{12}, x_{13}, x_{23}, x_1, x_2, x_3)$  in ascending order.

	After finding the system's solutions, that is, the cardinalities of the equivalence classes over $I_m$, it will be necessary to label its elements. By choice, we label the elements as follows: 
	
	
	\begin{small}
		\begin{align*}
			X_{123} &= \{1,\cdots,t_{123}\},\\
			X_{12}  &= \{t_{123}+1,\cdots,t_{123}+x_{12}\},\\
			X_{13}  &= \{t_{123}+x_{12}+1,\cdots,t_{123}+x_{12}+x_{13}\},\\
			X_{1}   &= \{t_{123}+x_{12}+x_{13}+1,\cdots,t_{123}+x_{12}+x_{13}+x_{1}\},\\
			X_{23}  &= \{t_{123}+x_{12}+x_{13}+x_{1}+1,\cdots,t_{123}+x_{12}+x_{13}+x_{1}+x_{23}\},\\
			X_{2}   &= \{t_{123}+x_{12}+x_{13}+x_{23}+x_{1}+1,\cdots,t_{123}+x_{12}+x_{13}+x_{23}+x_{1}+x_{2}\},\\
			X_{3}   &= \{t_{123}+x_{12}+x_{13}+x_{23}+x_{1}+x_{2}+1,\cdots,t_{123}+x_{12}+x_{13}+x_{23}+x_{1}+x_{2}+x_{3}\}.
		\end{align*}
	\end{small}
	
	
	\subsection{Code Loops of Rank 4}\label{section3.2}

	Let $V$ a $m$-degree reduced representation with basis $\left\lbrace v_1,v_2,v_3,v_4\right\rbrace $ corresponding to a code loop of rank $4$. Here we also use the notation $t$ or $t_{1234}$ to denote the weight of the 4 generators, that is, $t =t_{1234} = |v_{1}\cap v_{2}\cap v_{3}\cap v_{4}|$.
	
	As we know, the weights of the generators and their intersections are determined through the characteristic vector associated with the code loop. So, for our first step, take a characteristic vector associated with a code loop of rank $4$.   
	Since $t_{123}\equiv{1\;}(\mbox{mod}\;2)$, $t_{ij4}\equiv{0\;}(\mbox{mod}{\;2})$, $t_{ij}\equiv{0\;}(\mbox{mod}{\;2})$ and $t_{i}\equiv{0\;}(\mbox{mod}{\;4})$ and $V$ is reduced, then we will get $1\leq t_{123}\leq 13$, $0\leq t_{ij4}\leq 14$, $2\leq t_{ij}\leq 28$ $(i,j=1,2,3; \,i\neq j)$, $0\leq t_{i4}\leq 28$ and $4\leq t_{i}\leq 56$.

	\begin{remark}\label{rem3.7}
		Consider $I_{m}=\{1,2,\dots,m\}$, such that $v_1, v_2, v_3, v_4 \subset I_{m}$ and \linebreak $v_{1}\cup v_{2}\cup v_{3}\cup v_{4} = I_m$. Acoording to Lemma \ref{lemma3.1}, the equivalence classes on $I_{m}=\{1,2,\cdots,m\}$ are obtained by calculating the following sets (see Fig. \ref{figura2}):
		\begin{flushleft}
			$X_{1234}=v_1\cap v_2 \cap v_3 \cap v_4,\,\, \,\,\,X_{123}= (v_1\cap v_2 \cap v_3)\setminus v_4, \,\,\,\,X_{124}=(v_1\cap v_2 \cap v_4)\setminus v_3,$\\ 
			$X_{134}=(v_1\cap v_3 \cap v_4)\setminus v_2,\,\,\,\,X_{234}=(v_2\cap v_3 \cap v_4)\setminus v_1,\,\,\,\,X_{12}=(v_1\cap v_2)\setminus (v_3 \cup v_4),$ 
			$X_{13}=(v_1\cap v_3)\setminus (v_2 \cup v_4),\,\,X_{14}=(v_1\cap v_4)\setminus (v_2 \cup v_3),\,\,X_{23}=(v_2\cap v_3)\setminus (v_1 \cup v_4),$
			$X_{24}=(v_2\cap v_4)\setminus (v_1 \cup v_3),\,\,X_{34}=(v_3\cap v_4)\setminus (v_1 \cup v_2),\,\,X_{1}=v_1\setminus (v_2 \cup v_3 \cup v_4),$	$X_{2}=v_2\setminus (v_1\cup v_3 \cup v_4),\,\,\,\,\,\,\,\,\,X_{3}=v_3\setminus (v_1 \cup v_2 \cup v_4),\,\,\,\,\,\,\,\,\,X_{4}=v_4\setminus (v_1 \cup v_2 \cup v_3).$	
		\end{flushleft}
	\end{remark}
	\begin{figure}[h!]
		\centering
		\definecolor{uuuuuu}{rgb}{0.26666666666666666,0.26666666666666666,0.26666666666666666}
		\definecolor{ffffff}{rgb}{1,1,1}
		\begin{tikzpicture}[line cap=round,line join=round,>=triangle 45,x=1.5cm,y=0.8cm,scale=1.8]
			\path[use as bounding box] (0,0) rectangle (4,4);  
			\clip(-0.5,-0.3) rectangle (8.0,5.6);
			\fill[line width=0.8pt,color=ffffff] (4,0) -- (4,4) -- (0,4) -- (0,0) -- cycle;
			\draw [green,line width=0.7pt] (1,4)-- (1,0);
			\draw [orange,line width=0.7pt] (2,4)-- (2,0);
			\draw [green,line width=0.7pt] (3,4)-- (3,0);
			\draw [blue,line width=0.7pt] (0,3)-- (4,3);
			\draw [red,line width=0.7pt] (0,2)-- (4,2);
			\draw [blue,line width=0.7pt] (0,1)-- (4,1);
			\draw [black,line width=0.7pt] (0,4)-- (4,4);
			\draw [orange,line width=0.7pt] (4,4)-- (4,0);
			\draw [red,line width=0.7pt] (4,0)-- (0,0);
			\draw [black,line width=0.7pt] (0,0)-- (0,4);
			\draw (2.3,1.65) node[anchor=north west] {$X_{1234}$};
			\draw (1.3,1.65) node[anchor=north west] {$X_{123}$};
			\draw (2.3,2.65) node[anchor=north west] {$X_{124}$};
			\draw (1.3,2.65) node[anchor=north west] {$X_{12}$};
			\draw (0.3,2.65) node[anchor=north west] {$X_{1}$};
			\draw (0.3,1.65) node[anchor=north west] {$X_{13}$};
			\draw (3.3,1.65) node[anchor=north west] {$X_{134}$};
			\draw (1.3,3.65) node[anchor=north west] {$X_{2}$};
			\draw (3.3,2.65) node[anchor=north west] {$X_{14}$};
			\draw (0.3,0.65) node[anchor=north west] {$X_{3}$};
			\draw (1.3,0.65) node[anchor=north west] {$X_{23}$};
			\draw (2.3,0.65) node[anchor=north west] {$X_{234}$};
			\draw (3.3,0.65) node[anchor=north west] {$X_{34}$};
			\draw (2.3,3.65) node[anchor=north west] {$X_{24}$};
			\draw (3.3,3.65) node[anchor=north west] {$X_{4}$};
			\draw (-0.2,0.65) node[anchor=north west] {$v_{3}$};
			\draw (-0.2,2.65) node[anchor=north west] {$v_{1}$};
			\draw (-0.50,1.65) node[anchor=north west] {$v_{1}\cap v_{3}$};
			\draw (1.3,4.35) node[anchor=north west] {$v_{2}$};
			\draw (2.2,4.35) node[anchor=north west] {$v_{2}\cap v_{4}$};
			\draw (3.3,4.35) node[anchor=north west] {$v_{4}$};			
			\begin{scriptsize}
				\draw [fill=uuuuuu] (0,4) circle (0.1pt);
				\draw [fill=uuuuuu] (4,4) circle (0.1pt);
				\draw [fill=uuuuuu] (4,0) circle (0.1pt);
				\draw [fill=uuuuuu] (0,0) circle (0.1pt);
			\end{scriptsize}
		\end{tikzpicture}
		\caption{Equivalence classes on $I_{m}$ for code loops of rank $4$}\label{figura2}
	\end{figure}

	As done in previous section, denote by $x_{1234}$, $x_{ijk}$, $x_{ij}$ or $x_{i}$; $i,j,k=1,2,3,4$; the cardinality of this sets. Also assume $t_{1234}=x_{1234}$. The second step to finding reduced representations is to  choose values for $t_{1234}$, $t_{ijk}$, $t_{ij}$ and $t_{i}$, with $i,j,k=1,2,3,4$, that satisfy the properties presented in the Remark \ref{rem2.4}.

	In this case, we can also relate these weights and the cardinalities of the sets listed in the Remark \ref{rem3.7} through the following linear system of $14$ equations in the variables $x_{123}$, $x_{124}$, $x_{134}$, $x_{234}$, $x_{12}$, $x_{13}$, $x_{14}$, $x_{23}$, $x_{24}$, $x_{34}$, $x_{1}$, $x_{2}$, $x_{3}$ and $x_{4}$:
	
	\begin{eqnarray}\label{system3.4}\left\lbrace 
		\begin{array}{l}
			t+x_{123}=t_{123}\\
			t+x_{124}=t_{124}\\
			t+x_{134}=t_{134}\\
			t+x_{234}=t_{234}\\
			t+x_{123}+x_{124}+x_{12}=t_{12}\\
			t+x_{123}+x_{134}+x_{13}=t_{13}\\
			t+x_{124}+x_{134}+x_{14}=t_{14}\\
			t+x_{123}+x_{234}+x_{23}=t_{23}\\
			t+x_{124}+x_{234}+x_{24}=t_{24}\\
			t+x_{134}+x_{234}+x_{34}=t_{34}\\
			t+x_{123}+x_{12}+x_{124}+x_{134}+x_{13}+x_{14}+x_1=t_{1}\\
			t+x_{123}+x_{12}+x_{124}+x_{234}+x_{23}+x_{24}+x_2=t_{2}\\
			t+x_{123}+x_{13}+x_{134}+x_{234}+x_{23}+x_{34}+x_3=t_{3}\\
			t+x_{124}+x_{134}+x_{234}+x_{14}+x_{24}+x_{34}+x_4=t_{4}\\	
		\end{array}
		\right.
	\end{eqnarray}
	
	The third step is to solve this system. Solving it, we obtain: 
	
	\begin{equation*}\label{}\left\lbrace 
		\begin{array}{l}
			x_{123}=t_{123}-t\\
			x_{124}=t_{124}-t\\
			x_{134}=t_{134}-t\\
			x_{234}=t_{234}-t\\
			x_{12}=t_{12}-t_{123}-t_{124}+t\\
			x_{13}=t_{13}-t_{123}-t_{134}+t\\
			x_{14}=t_{14}-t_{124}-t_{134}+t\\
			x_{23}=t_{23}-t_{123}-t_{234}+t\\
			x_{24}=t_{24}-t_{124}-t_{234}+t\\
			x_{34}=t_{34}-t_{134}-t_{234}+t\\
			x_{1}=t_{1}-t_{12}-t_{13}+t_{123}-t_{14}+t_{124}+t_{134}-t\\
			x_{2}=t_{2}-t_{12}-t_{23}+t_{123}-t_{24}+t_{234}+t_{124}-t\\
			x_{3}=t_{3}-t_{13}-t_{23}+t_{123}-t_{34}+t_{234}+t_{134}-t\\
			x_{4}=t_{4}-t_{14}-t_{24}+t_{124}-t_{34}+t_{134}+t_{234}-t\\	
		\end{array}
		\right.
	\end{equation*}

	Remember that the reduced representations will be get for all those solutions that satisfies the following conditions, for all $i,j,k \in \{1,2,3,4\}$:  
	\begin{equation*}
		0\leq x_{i}\leq 7,\,\,\, 0\leq x_{ij}\leq 7,\,\,\,0\leq x_{ijk}\leq 7, \,\,\, 0\leq x_{1234}\leq 7.
	\end{equation*}

	With those solutions, the sets $X_{1234},X_{123}$, $X_{124}$, $X_{134}$, $X_{234}$, $X_{12}$, $X_{13}$, $X_{14}$, $X_{23}$, $X_{24}$, $X_{34}$, $X_{1}$, $X_{2}$, $X_{3}$ and $X_{4}$ are determined following the labeling chosen in the Table \ref{tb6}. The order chosen to determine these sets is arbitrary. Note that only non-empty sets determine the equivalence classes on $I_{m}$, where $m$ is the degree of $V$. At last, find the generators $v_1, v_2, v_3$ and $v_4$ of $V$ writing:
	\begin{eqnarray*}
		v_1 &=& X_{1234}\cup X_{123}\cup X_{124}\cup X_{134}\cup X_{12} \cup X_{13} \cup X_{14} \cup X_1,\\
		v_2 &=& X_{1234}\cup X_{123}\cup X_{124}\cup X_{234}\cup X_{12} \cup X_{23} \cup X_{24} \cup X_2,\\
		v_3 &=& X_{1234}\cup X_{123}\cup X_{134}\cup X_{234}\cup X_{13} \cup X_{23} \cup X_{34} \cup X_3,\\
		v_4 &=& X_{1234}\cup X_{124}\cup X_{134}\cup X_{234}\cup X_{14} \cup X_{24} \cup X_{34} \cup X_4.
	\end{eqnarray*}
	
	As an illustration, we will present a reduced representation, denoted by $V_{16}^4$, for the code loop with characteristic vector $(0001111100)$. We will use the following notation:  
	\begin{eqnarray*}
		u=(t,t_{123},t_{124},t_{134},t_{234},t_{12},t_{13},t_{14},t_{23},t_{24},t_{34},t_{1},t_{2},t_{3})\\  v=(x_{123},x_{124},x_{134},x_{234},x_{12},x_{13},x_{14},x_{23},x_{24},x_{34},x_{1},x_{2},x_{3})
	\end{eqnarray*}
	
	If  $u=\left[ 0,1,0,2,0,2,6,2,6,0,4,8,8,16,4\right]$, then   $v=(1,0,2,0,1,3,0,5,0,2,1,1,3,0)$ is the solution of the System (\ref{system3.4}). Thus the generators of $V_{16}^4$ are given by:
	\begin{eqnarray*}
		v_1&=&\left\lbrace 1,2,3,4,5,6,7,8 \right\rbrace , \\
		v_2&=&\left\lbrace 1,4,9,10,11,12,13,14 \right\rbrace,\\
		v_3&=&\left\lbrace 1,2,3,5,6,7,9,10,11,12,13,15,16,17,18,19 \right\rbrace ,\\
		v_4&=&\left\lbrace 2,3,15,16\right\rbrace .
	\end{eqnarray*}

	
	\begin{table}[H]
		\centering
		\scalefont{0.6}
		\caption{Labeling elements of equivalence classes on $I_{m}$. \label{tb6}} 
		{\begin{tabular}{|l|l|l|} \hline
				$X_{1234}$&$\left\lbrace \;\right\rbrace$ \;\mbox{if}\; $x_{1234}=0$ &  \\ 
				&$\left\lbrace 1,\dots,x_{1234}\right\rbrace$ \;\mbox{if}\; $x_{1234}\neq 0$&  \\ \hline
				$X_{123}$ &$\left\lbrace \;\right\rbrace$ \;\mbox{if}\; $x_{123}=0$ &$n_{123}=x_{1234}+x_{123}$\\
				& $\left\lbrace x_{1234}+1,\dots,n_{123}  \right\rbrace$ &   \\ \hline
				$X_{124}$ & $\left\lbrace \; \right\rbrace$ \;\mbox{if}\; $x_{124}=0$ & $n_{124}=n_{123}+x_{124}$\\ 
				&$\left\lbrace n_{123}+1, \dots, n_{124} \right\rbrace$ \;\mbox{if}\; $x_{124}\neq 0$&\\ \hline
				$X_{12}$ & $\left\lbrace \; \right\rbrace$ \;\mbox{if}\; $x_{12}=0$ & $n_{12}=n_{124}+x_{12}$ \\
				&$\left\lbrace n_{134}+1, \dots, n_{12} \right\rbrace$ & \\ \hline
				
				$X_{134}$ & $\left\lbrace \; \right\rbrace$ \;\mbox{if}\; $x_{134}=0$ & $n_{134}=n_{12}+x_{134}$\\ 
				&$\left\lbrace n_{12}+1, \dots, n_{134} \right\rbrace$ \;\mbox{if}\; $x_{134}\neq 0$ &\\ \hline
				
				$X_{13}$ &$\left\lbrace \; \right\rbrace$ \;\mbox{if}\; $x_{13}=0$ & $n_{13}=n_{134}+x_{13}$ \\
				&$\left\lbrace n_{134}+1, \dots, n_{13} \right\rbrace$ & \\ \hline
				
				$X_{14}$ & $\left\lbrace \; \right\rbrace$ \;\mbox{if}\; $x_{14}=0$ & $n_{14}=n_{13}+x_{14}$\\ 
				& $\left\lbrace n_{13}+1, \dots, n_{14} \right\rbrace$ \;\mbox{if}\; $x_{14}\neq 0$ & \\ \hline
				
				$X_{1}$& $\left\lbrace \; \right\rbrace$ \;\mbox{if}\; $x_{1}=0$ & $n_1 = n_{14}+x_1$ \\
				&$\left\lbrace n_{14}+1, \dots, n_1 \right\rbrace$ & \\ \hline
				
				$X_{234}$ & $\left\lbrace \; \right\rbrace$ \;\mbox{if}\; $x_{234}=0$  & $n_{234}=n_1 + x_{234}$\\ 
				&$\left\lbrace n_{1}+1, \dots, n_{234} \right\rbrace$ \;\mbox{if}\; $x_{234}\neq 0$ & \\ \hline
				
				$X_{23}$& $\left\lbrace \; \right\rbrace$ \;\mbox{if}\; $x_{23}=0$   &  $n_{23}=n_{234} + x_{23}$\\
				&$\left\lbrace n_{234}+1, \dots, n_{23} \right\rbrace$ &  \\ \hline
				
				$X_{24}$ & $\left\lbrace \; \right\rbrace$ \;\mbox{if}\; $x_{24}= 0$ & $n_{24}=n_{23} + x_{24}$\\ 
				&$\left\lbrace n_{23}+1, \dots, n_{24} \right\rbrace$ \;\mbox{if}\; $x_{24}\neq 0$ &\\ \hline
				
				$X_{2}$& $\left\lbrace \; \right\rbrace$ \;\mbox{if}\; $x_{2}= 0$  & $n_{2}=n_{24}+x_{2}$  \\
				&$\left\lbrace n_{24}+1, \dots, n_{2} \right\rbrace$ & \\ \hline
				
				$X_{34}$&$\left\lbrace \; \right\rbrace$ \;\mbox{if}\; $x_{34}= 0$ & $n_{34}=n_{2} + x_{34}$ \\ 
				&$\left\lbrace n_{2}+1, \dots, n_{34} \right\rbrace$ \;\mbox{if}\; $x_{34}\neq 0$ & \\ \hline
				
				$X_{3}$ & $\left\lbrace \; \right\rbrace$ \;\mbox{if}\; $x_{3}= 0$   &  $n_{3}=n_{34} + x_{3}$ \\
				&$\left\lbrace n_{34}+1, \dots, n_{3} \right\rbrace$ & \\ \hline
				
				$X_{4}$ & $\left\lbrace \; \right\rbrace$ \;\mbox{if}\; $x_{4}= 0$ & $n_{4}=n_{3} + x_{4}$\\ 
				&$\left\lbrace n_{3}+1, \dots, n_{4} \right\rbrace$ \;\mbox{if}\; $x_{4}\neq 0$ &\\ \hline
		\end{tabular}}
	\end{table}

	
	\section{Minimal Representations}
	
	Below, we present in the theorems \ref{th3.4} and \ref{theorem3.6} the representations of nonassociative code loops of rank $3$ and $4$ where the degree of each representation is minimal and we determine the degrees of these representations. Here we use the notation $\left\langle v_{1},v_{2},\cdots, v_{n}\right\rangle$ to denote the vector subspace of the space $\mathbb{F}_{2}^{m}$ generated by $v_{1},v_{2},\cdots,v_{n}$. We also use the notations introduced in Section \ref{section3.2}.

	We define the length of the vector $v_i + v_j$ to be the cardinality of the set $v_i \cup v_j $, that is, the length of $v_i + v_j$ is the number $l_{ij}=t_i +t_j - t_{ij}$. Analogously we define the length of the vector $v_i +v_j +v_k$ to be the cardinality of the set $v_i \cup v_j \cup v_k$, that is, the length of $v_i +v_j +v_k$ is the number $l_{ijk}=t_i +t_j +t_k - (t_{ij} + t_{ik} + t_{jk}) + t_{ijk}$.

	\begin{theorem}\label{th3.4}
		The code loops $C_{1}^{3},\dots,C_{5}^{3}$ have the following minimal representations $V_{1},\dots,V_{5}$, which are given by
		
		\vspace{0.05in}
		\noindent
		$V_{1}=\left\langle (1,2,3,4),(1,2,5,6),(1,3,5,7)\right\rangle,$\\
		$V_{2}=\left\langle (1,2,3,4,5,6,7,8),(1,2,3,4,9,10,11,12),(1,5,6,7,9,10,11,13)\right\rangle,$\\
		$V_{3}=\left\langle (1,2,3,4,5,6,7,8),(1,2,3,4,5,6,9,10),(1,2,3,4,5,7,9,11)\right\rangle,$\\
		$V_{4}=\left\langle (1,2,3,4),(1,2,5-14),(1,3,5,6,7,8,9,10,11,15,16,17)\right\rangle,$\\
		$V_{5}=\left\langle (1-12),(1-8,13,14,15,16),(1,2,3,4,5,9,10,11,13,14,15,17)\right\rangle.$
	\end{theorem}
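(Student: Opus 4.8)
The plan is to treat each loop $C_i^3$ separately and, for each, to establish $\deg(V_i)$ both as the degree of \emph{a} representation of $C_i^3$ and as a lower bound for the degree of \emph{any} representation; the resulting minimal degrees come out to $7,13,11,17,17$.

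For the upper bound I would first check that each $V_i$ is a doubly even binary code by listing the $2^3-1$ nonzero codewords and verifying that all their Hamming weights are divisible by $4$. Then, for the displayed basis $\{v_1,v_2,v_3\}$, I compute the weights $t_j=|v_j|$, $t_{jk}=|v_j\cap v_k|$ and $t_{123}=|v_1\cap v_2\cap v_3|$, so that by Equations~(\ref{eq1}), (\ref{eq2}) and (\ref{eq3}) the characteristic vector of $L(V_i)$ is recovered from $t_j \bmod 8$, $t_{jk}\bmod 4$ and $t_{123}\bmod 2$. For $V_1,V_2,V_3$ this produces exactly $(111111)$, $(000000)$, $(000111)$, so $L(V_i)\simeq C_i^3$ by Theorem~\ref{th2.1}. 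For $V_4$ and $V_5$ the displayed basis yields a vector that is $GL_3(2)$-conjugate to, but not equal to, the orbit representative; passing to the basis $\{v_2,\,v_1+v_3,\,v_2+v_3\}$ for $V_4$ and to $\{v_1,\,v_1+v_2,\,v_1+v_3\}$ for $V_5$ recovers $\lambda(C_4^3)=(110000)$ and $\lambda(C_5^3)=(100000)$, and Theorem~\ref{th2.1} again gives the isomorphisms. Reading off $v_1\cup v_2\cup v_3$ then shows $\deg(V_i)=7,13,11,17,17$, respectively.

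For the lower bound, let $W=\langle w_1,w_2,w_3\rangle$ be any representation of $C_i^3$; I may assume $w_1\cup w_2\cup w_3=I_{\deg(W)}$, since deleting coordinates lying in no $w_j$ only decreases the degree. By Remark~\ref{rem2.2} the basis may be chosen so that the weights $t_j$, $t_{jk}$, $t_{123}$ of the $w$'s satisfy the congruences of case~$i$ together with $t_{123}\equiv 1\pmod 2$; moreover $t_j\geq 4$, since $w_j\neq 0$. Inclusion--exclusion gives $\deg(W)=t_1+t_2+t_3-t_{12}-t_{13}-t_{23}+t_{123}$, while Lemma~\ref{lemma3.1} and the explicit solution of System~(\ref{eq.3.2}) force the equivalence-class cardinalities $x_{jk}=t_{jk}-t_{123}$ and $x_j=t_j-t_{jk}-t_{jl}+t_{123}$ (with $\{j,k,l\}=\{1,2,3\}$) to be nonnegative. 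Minimizing the inclusion--exclusion expression over the finitely many integer tuples obeying these congruences and inequalities returns exactly $\deg(V_i)$ in each case; combined with the upper bound, this proves minimality.

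I expect the optimization in the last step to be the main obstacle — not any single computation, but the bookkeeping: one must see that enlarging the $t_{jk}$ (which would shrink the inclusion--exclusion sum) is blocked by the inequalities $x_j\geq 0$ together with the residue restrictions on the $t_{jk}$, and that raising any $t_j$ above its smallest admissible residue never helps, so the minimum is attained at the smallest feasible weights — which for $C_3^3$ unexpectedly requires $t_{123}=5$ rather than $t_{123}=1$. A secondary, minor obstacle is the $GL_3(2)$ bookkeeping in the upper bound for $C_4^3$ and $C_5^3$, where the displayed basis does not directly realize $\lambda(C_i^3)$ and one must exhibit the change of basis above.
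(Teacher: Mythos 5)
Your proposal is correct and follows essentially the same route as the paper: verify that each $V_i$ is a doubly even code realizing a characteristic vector in the orbit of $\lambda(C_i^3)$, then obtain the lower bound by constraining the weights $t_j$, $t_{jk}$, $t_{123}$ through the congruences of Remark~\ref{rem2.2} together with the nonnegativity of the equivalence-class cardinalities coming from Lemma~\ref{lemma3.1} and System~(\ref{eq.3.2}) — your ``integer optimization'' is exactly the paper's case-by-case weight analysis, systematized. The only cosmetic difference is that you renormalize $V_4$ and $V_5$ to the orbit representatives by an explicit change of basis, whereas the paper computes the characteristic vector of the displayed basis (e.g.\ $(111000)$ for $V_5$) and appeals to the orbit classification of \cite{AR}.
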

	
	\begin{proof}
		We consider the ${\mathbb F}_{2}$-subspaces $V_{1},\dots,V_{5}$ of ${{\mathbb F}_{2}^{7}},{{\mathbb F}_{2}^{13}},{{\mathbb F}_{2}^{11}},{{\mathbb F}_{2}^{17}},{{\mathbb F}_{2}^{17}},$ respectively, as above. First we will see that each space $V_{i}$  is a representation of $C_{i}^{3}$, that is,  a doubly even code of ${{\mathbb F}_{2}^{m}}$, for some $m$ and that $C_{i}^{3} \simeq L(V_{i})$, $i=1,\dots,5$. 
		
		Since the elements of $V_{5}$ are $v_{0}=0$, $v_{1}=(1-12)$,   $v_{2}=(1-8,13-16)$, $v_{3}=(1-5,9-11,13-15,17)$, $v_{4}=v_{1}+v_{2}=(9-16)$, $v_{5}=v_{1}+v_{3}=(6,7,8,12,13,14,15,17)$, $v_{6}=v_{2}+v_{3}=(6-11,16,17)$ and $v_{7}=v_{1}+v_{2}+v_{3}=(1-5,12,16,17)$, we see, clearly, that all the vectors have weight with multiplicity  $4$ and the weight of the intersection of each pair of vectors is even. Thus, $V_{5}$ is a doubly even code. Analogously, we prove that $V_{i}$, $i=1,\dots,4$ is a doubly even code.
		
		Now, the isomorphism $C_{i}^{3}\simeq L(V_{i})$ follows directly from Theorem \ref{th2.1} of Classification of Code Loops of rank $3$. First, we just need to calculate the characteristic vector associated to $L(V_{i})$ and note that it corresponds to the same code loop $C_{i}^{3}$ (see [\cite{AR}, Proposition 3.2]).
		
		As example, we calculate the characteristic vector associate to \linebreak $L(V_{5})=\{1,-1\}\times V_{5}$. We have $ v_{i}^{2}=(-1)^{\frac{|v_{i}|}{4}}=-1$ and $ \left[v_{i},v_{j}\right]=(-1)^{\frac{|v_{i}\cap v_{j}|}{2}}=1,$ for $i,j=1,2,3$. Hence $\lambda(L(V_{5}))=(111000)$. Since this vectors produces the code loop $C_{5}^{3}$, by Theorem \ref{th2.1} we conclude that $C_{5}^{3}\simeq L(V_{5}).$
		
		We are going to demonstrate now that each $V_{i}$, $i=1,\dots,5$, up to isomorphism, is the unique minimal representation of the code loop $C_{i}^{3}$. We consider $X=\{a,b,c\}$ a set of generators for $C_{i}^{3}$ such that $\lambda = \lambda_{X}(C_{i}^{3})$ is the corresponding characteristic vector, and we assume that $V$ is a minimal representation of $C_{i}^{3}$, where $v,w,u$ are the elements of the basis of $V$ which corresponds to $a,b,c$. We use the notation $t=|v \cap w\cap u|$. Remember that $t \equiv 1\;(\mbox{mod}\;2)$, because $(a,b,c)=-1$.

		{\itshape Case} $i=1$: In this case suppose that $\mbox{deg}(V) \leq 7$. The characteristic vector is $\lambda = (111111)$, then $|v|\equiv |w|\equiv |u|\equiv 4\; (\mbox{mod}\;8)$. We can suppose that $v=(1,2,3,4)$, but $[a,b]=-1$, then $|v \cap w| \equiv 2 \;(\mbox{mod}\;4)$, and hence, $|v\cap w|=2$. Analogously, we obtain $|v\cap u|=2$. Let $w=(1,2,5,6)$, hence we also obtain $|w\cap u|=2$ and then, $t=1$. Hence, $u=(1,3,5,7)$. Therefore, $V=V_{1}$.

		{\itshape Case} $i=2$: The characteristic vector is $\lambda = (000000)$, then $|v|\equiv |w|\equiv |u|\equiv 0\; (\mbox{mod}\;8)$. Since $\mbox{deg}(V) \leq 13$, then $|v|=8$. Analogously, $|w|=|u|=8$. Let $v=(1-8)$. As $|v \cap w| \equiv 0 \;(\mbox{mod}\;4)$, hence $|v\cap w|=4$. Analogously, $|v\cap u|=|w\cap u|=4$.
		
		Let $w=(1-4,9-12)$. We have two possibilities for $t$: $t=1$ or $t=~3$. Case $t=3$, we will have $\mbox{deg}(V) = 15$, a contradiction. Hence, $t=1$ and $u=(1,5-7,9-11,13)$. Therefore, $V=V_{2}$.
		
		{\itshape Case} $i=3$: We consider $\lambda = (000111)$ and $\mbox{deg}(V) \leq 11$. Analogously to the previous case, we have $|v|=|w|=|u|=8$. In this case, we have $|v\cap w|=|v\cap u|=|w\cap u|=6$. We suppose $v=(1-8)$ and $w=(1-6,9,10)$. We have three possibilities for $t$: $1,3$ or $5$. Case $t\leq 3$ we will have $|v|> 8$, a contradiction. Hence, $t=5$ and we can assume $u=(1-5,7,9,11)$. Therefore, $V=V_{3}$.
		
		{\itshape Case} $i=4$: In this case, consider the caracteristic vector $\lambda = (111110)$. Since $|v|\equiv |w|\equiv |u|\equiv 4 \;(\mbox{mod}\;8)$ and $|w\cap u|\equiv 0 \;(\mbox{mod}\; 4)$ then $|w|=|u|=12$ and $|w\cap u|=8$. Let $v=(1,2,3,4)$, so $|v\cap w|= |v\cap u| = 2$ and $t=1$. Then we can assume $w=(1,2,5-14)$ and  $u=(1,3,5-11,15-17)$. Therefore, $V = V_{4}$. 
		
		Now, we suppose $v=(1-12)$. If $|v\cap w| \leq 6$ then $\mbox{deg}(V) \geq 18.$ In fact, we have $|v\cap w|=2,6$ or $10$. If $|v\cap w|=2$ we will have $|v+w|=20$, a contradiction. If $|v\cap w|=6$ we will have $|v+w|=12$ and hence, $\mbox{deg}(V) \;\geq 18$, a contradiction. Therefore, $|v\cap w| =10$. Analogously we obtain $|v\cap u|=10$. Without loss of generality, we suppose $w=(1,2,5-14)$. Since $t\equiv 1\;(\mbox{mod}\;2)$, then the possibilities for $t$ are $1,3$ or $7$. In any case we will have $|v| \geq 13,$ which is a contradiction. Then, there is not this last possibility for $v$.

		{\itshape Case} $i=5$: Consider the characteristic vector given by $\lambda = (111000)$, so we have $|v|\equiv |w|\equiv |u|\equiv 4 \;(\mbox{mod}\;8)$.  Assuming $v=(1,2,3,4)$ and since $[a,b]=1$, we will have $|v \cap w| \equiv 0 \;(\mbox{mod}\;4)$. Case $v\cap w \neq \emptyset$ we will have $|v\cap w|=4$, and hence $v \subset w$, which is a contradiction, because this give us $t \equiv 0 \;(\mbox{mod}\;2)$, which does not occur. The case $v\cap w = \emptyset$ also does not occur, since we must have nonempty intersection between $v, w$ and $u$.
		
		Therefore $|v|\geq 12$. Analogously, we prove that $|w|\geq 12$ and $|u|\geq 12$.
		
		The representation $V$ is minimal and $\mbox{deg}(V) \leq 17$, so we obtain $|v|=|w|=|u|=12$. Without loss of generality, we suppose $v=(1-12)$ and $|w\cap v| = 4$ or $8$.  If $|v\cap w|=4$, then $|v+w|=16$ and hence, $\mbox{deg}(V) \geq |v\cap w|+|v+w|=20,$ which is a contradiction.
		
		Therefore, $v\cap w = (1-8)$ and $w=(1-8,13-16).$ Analogously, we have $|v\cap u|=|w\cap u|=8.$
		
		If $t\leq 3$, then $|v|\geq |v\cap w\cap u|+|(v\cap w)\backslash
		(v\cap w\cap u)|+|(v\cap u)\backslash
		(v\cap w\cap u)|\geq 3+5+5=13$, a contradiction. Therefore, $t\geq 5.$
		
		If $t=7$, then $\mbox{deg}(V) \geq 19$. To prove this, we consider $u=(i_{1},\dots,i_{12})$, where $i_{1},\dots,i_{7} \in v\cap w\cap u$, but since $v\cap w = (1-8)$ then $i_{1},\dots,i_{7} \in v\cap w$. Suppose $u=(1-7,i_{8},\dots,i_{12})$. Since $|u\cap v|=8$ and $|u\cap w|=8$, so $i_{8} \in \left\{9,10,11,12\right\}$ whereas $i_{9} \in \left\{13,14,15,16\right\}$. We choose $i_{8}=10$ and $i_{9}=13.$ Hence $i_{j} \notin \left\{9,11,12,14,15,16\right\}$, $j=10,11,12$. Hence, a possibility for $u$ it will be $u=(1-7,10,13,17-19)$ so that $\mbox{deg}(V) \geq 19$, a contradiction with the fact that $V$ is a minimal representation. Thus, $t=5$ and  $u=~(1-5,9-11,13-15,17)$. Therefore, $V=V_{5}$.
		
	\end{proof}
	%
	%
	

	\begin{corollary}
		Each minimal representation of the code loops $C_{1}^{3},\dots,C_{5}^{3}$ has the following types, respectively:
		$$(1111111),(1111333),(1111115),(1111337),(1113335).$$
	\end{corollary}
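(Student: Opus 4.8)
The plan is to read the type off directly from the explicit representations $V_1,\dots,V_5$ exhibited in Theorem~\ref{th3.4}. By that theorem each $V_i$ is, up to equivalence of codes, the unique minimal representation of $C_i^3$; moreover a coordinate permutation carries equivalence classes bijectively onto equivalence classes of the same size, so the type is a well-defined invariant of $C_i^3$ and it suffices to compute it for the displayed representative.

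For a fixed $i$ I would take the three displayed generators $v_1,v_2,v_3$, compute the weights $t_p=|v_p|$, $t_{pq}=|v_p\cap v_q|$ and $t_{123}=|v_1\cap v_2\cap v_3|$, and then obtain the cardinalities of the seven equivalence classes $X_{123},X_{12},X_{13},X_{23},X_1,X_2,X_3$ of Subsection~\ref{sec3.1} from the solution of System~(\ref{eq.3.2}): $x_{pq}=t_{pq}-t_{123}$ and $x_p=t_p-t_{pq}-t_{pr}+t_{123}$, where $\{p,q,r\}=\{1,2,3\}$ (equivalently, one may simply intersect the explicit sets). The type of $V_i$ is the tuple $(x_{123},x_{12},x_{13},x_{23},x_1,x_2,x_3)$ with its entries sorted in nondecreasing order and zeros deleted. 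For example, for $V_3$ one gets $v_1\cap v_2\cap v_3=\{1,2,3,4,5\}$, so $x_{123}=5$, while the remaining six classes are singletons, giving type $(1111115)$; the other four cases are pure bookkeeping and produce $(1111111)$, $(1111333)$, $(1111337)$ and $(1113335)$ for $V_1$, $V_2$, $V_4$ and $V_5$ respectively. In every case all seven classes are nonempty, so each type has exactly seven coordinates.

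There is no real obstacle: the corollary is an immediate consequence of Theorem~\ref{th3.4}. The only point needing a word of care is that ``type'' is an invariant of the equivalence class of a representation rather than of a particular basis — independence of the basis is Lemma~\ref{ind.generators}, independence of the chosen code in its equivalence class is immediate from the definition of equivalent codes, and together with the uniqueness clause of Theorem~\ref{th3.4} this justifies computing the type from the single displayed $V_i$.
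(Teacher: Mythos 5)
Your proposal is correct and matches what the paper intends: the corollary is stated as an immediate consequence of Theorem~\ref{th3.4}, and the type is obtained by computing the seven equivalence classes $X_{123},X_{ij},X_i$ directly from the displayed generators of each $V_i$ (your sample computation for $V_3$ and the remaining four types all check out). Your added remark on well-definedness of the type via Lemma~\ref{ind.generators} and the uniqueness clause of Theorem~\ref{th3.4} is a sensible precaution that the paper leaves implicit.
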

	
	In order to present the Representation Theorem for nonassociative code loops of rank $4$, note that, according to Theorem \ref{th2.3}, we have exactly $16$ code loops of rank $4$, namely, $C_{1}^{4},C_{2}^{4},\dots,C_{16}^{4}.$ For each $C_{i}^{4}$, $i=1,\dots,16,$ we have to find $V_{i} \subseteq {\mathbb{F}_{2}^{m}}$ doubly even code of minimal degree $m$ such that $ C_{i}^{4}\cong L(V_{i}).$

	\begin{theorem}\label{theorem3.6}
		The code loops $C_{1}^{4},\dots,C_{16}^{4}$ have the following  minimal representations $V_{1},\dots,V_{16}$, which are given by:
		
		\vspace{0.05in}
		\noindent
		$V_{1}=\left\langle (1,2,3,4),(1,2,5,6),(1,3,5,7),(1-8)\right\rangle,$\\
		$V_{2}=\left\langle (1-8),(1-4,9-12),(1,5,6,7,9,10,11,13),(1,2,5,8,9,12,13,14)\right\rangle,$\\
		$V_{3}=\left\langle (1-8),(1-6,9,10),(1,2,3,4,5,7,9,11),(1,6-12)\right\rangle,$\\
		$V_{4}=\left\langle (1-8),(1-6,9,10),(1,2,3,7,9,11-17),(1,4,7,8,9,10,11,18)\right\rangle,$\\
		$V_{5}=\left\langle (1-8),(1,2,3,4,9,10,11,12),(1,5,9,13-17),(1,2,5,6,9,10,13,18)\right\rangle,$\\
		$V_{6}=\left\langle (1,2,3,4),(1,2,5,6),(1,3,5,7),(8,9,10,11)\right\rangle,$\\
		$V_{7}=\left\langle (1-8),(1,2,3,4,9,10,11,12),(1,2,3,5,9,13,14,15),(1-16)\right\rangle,$\\
		$V_{8}=\left\langle (1-8),(1,2,3,4,9-12),(1,2,3,5,9,13,14,15),(1,2,9,10,13,14,16,17)\right\rangle,$\\
		$V_{9}=\left\langle (1-8),(1,2,3,4,9-16),(1,5,6,7,9,10,11,17),(5,6,9,10,12,13,18,19)\right\rangle,$\\
		$V_{10}=\left\langle (1-8),(1,2,9-14),(1,3,9,10,11,15,16,17),(1-6,9-18)\right\rangle,$\\
		$V_{11}=\left\langle (1-8),(1,2,3,4,9-12),(1,2,3,5,9,13,14,15),(6,7,16,17)\right\rangle,$\\
		$V_{12}=\left\langle (1-8),(1,2,3,4,9-12),(1,2,3,5,9,10,11,13),(1,2,9,10,14,15,16,17)\right\rangle,$\\
		$V_{13}=\left\langle (1-8),(1,2,9,10),(1,3,9,11),(4,5,12,13,14,15,16,17)\right\rangle,$\\
		$V_{14}=\left\langle (1-8),(1,2,3,4,9-12),(1,2,3,5,9,10,11,13),(1,2,9,10)\right\rangle,$\\
		$V_{15}=\left\langle (1-12),(1,2,3,4,13-16),(1,2,3,5,13,14,15,17),(1,2,13,14)\right\rangle,$\\
		$V_{16}=\left\langle (1-8),(1,2,9,10,11,12,13,14),(1,3,9,10,11,12,13,15),(4,5,16,17)\right\rangle.$
	\end{theorem}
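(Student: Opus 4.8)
The plan is to follow the three-step scheme of the proof of Theorem~\ref{th3.4}, now carried out separately for each of the sixteen loops $C_1^4,\dots,C_{16}^4$. \emph{First}, one checks that each $V_i$ is a doubly even code: list the four generators $v_1,v_2,v_3,v_4$ and verify $|v_j|\equiv 0\pmod 4$ for all $j$ and $|v_j\cap v_k|\equiv 0\pmod 2$ for all $j\neq k$. From the weight identity $|v_{i_1}+\cdots+v_{i_r}|=\sum_{\emptyset\neq S}(-2)^{|S|-1}\bigl|\bigcap_{i\in S}v_i\bigr|$ these two conditions already force every element of $\langle v_1,\dots,v_4\rangle$ (each being a sum of a subset of the $v_j$) to have weight divisible by $4$; hence $V_i$ is a doubly even code of $\mathbb{F}_2^{m_i}$, with $m_i$ the degree displayed in the statement. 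Along the way one records $|v_j|\pmod 8$, $|v_j\cap v_k|\pmod 4$, $|v_j\cap v_k\cap v_l|\pmod 2$ and $|v_1\cap v_2\cap v_3\cap v_4|\pmod 2$, all readable directly from the explicit vectors.

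\emph{Second}, one identifies the loop. The four generators of $V_i$ are visibly $\mathbb{F}_2$-independent and $|v_1\cap v_2\cap v_3|$ is odd, so $L(V_i)$ is a nonassociative code loop of rank $4$; by Theorem~\ref{th2.3} it is isomorphic to exactly one of $C_1^4,\dots,C_{16}^4$. To see which one, compute the characteristic vector of $L(V_i)$ with respect to $v_1,v_2,v_3,v_4$ using (\ref{eq1}), (\ref{eq2}), (\ref{eq3}), namely $v_j^2=(-1)^{|v_j|/4}$, $[v_j,v_k]=(-1)^{|v_j\cap v_k|/2}$, $(v_j,v_k,v_l)=(-1)^{|v_j\cap v_k\cap v_l|}$, and check that the resulting vector lies in the $GL_4(2)$-orbit of $\lambda(C_i^4)$; since orbits correspond bijectively to isomorphism classes~\cite{8}, this gives $L(V_i)\simeq C_i^4$. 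As each $V_i$ was produced by the algorithm of Section~\ref{section3.2} starting from $\lambda(C_i^4)$, this comparison is routine.

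\emph{Third}, and this is where the work lies, one proves minimality and uniqueness. Fix $i$, pick a convenient characteristic vector $\lambda$ in the orbit of $\lambda(C_i^4)$, and let $V=\langle v_1,v_2,v_3,v_4\rangle$ be \emph{any} reduced representation of $C_i^4$, the $v_j$ corresponding to a generating set realizing $\lambda$ with nucleus $\mathbb{F}_2 v_4$ (Lemma~3.4 of~\cite{AR}); write $t_j=|v_j|$, $t_{jk}=|v_j\cap v_k|$, $t_{jkl}=|v_j\cap v_k\cap v_l|$, $t=|v_1\cap v_2\cap v_3\cap v_4|$. By Remark~\ref{rem2.4} the classes $t_j\pmod 8$, $t_{jk}\pmod 4$, $t_{jkl}\pmod 2$ are determined by $\lambda$, and reducedness gives the finite ranges $1\le t_{123}\le 13$, $0\le t_{ij4}\le 14$, $2\le t_{ij}\le 28$, $0\le t_{i4}\le 28$, $4\le t_i\le 56$ of Section~\ref{section3.2}. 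Using the inclusions $v_j\cup v_k\subseteq I_m$, $v_j\cup v_k\cup v_l\subseteq I_m$, $v_1\cup v_2\cup v_3\cup v_4=I_m$ together with the length identities $l_{jk}=t_j+t_k-t_{jk}$ and $l_{jkl}=t_j+t_k+t_l-(t_{jk}+t_{jl}+t_{kl})+t_{jkl}$, one bounds $\deg V$ from below, discarding every branch in which the congruences force a length above $m_i$, exactly as in the cases $i=4,5$ of the proof of Theorem~\ref{th3.4}. In each of the sixteen cases this leaves a single admissible assignment of all the weights; substituting it into System~(\ref{system3.4}) determines the fourteen cardinalities $x_\sigma$ uniquely, and the labelling of Table~\ref{tb6} then recovers a representation of degree $m_i$ that coincides with $V_i$ after a permutation of $I_{m_i}$. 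Hence $\deg V=m_i$ and $V$ is code-equivalent to $V_i$.

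The main obstacle is the third step: it is sixteen essentially independent combinatorial arguments, each of the type used for rank~$3$ but with an extra generator, the three-element intersections $t_{jkl}$ to keep track of, and the side constraint $N(L)=\mathbb{F}_2 v_4$ (so the $t_{ij4}$ are even while $t_{123}$ is odd). The bookkeeping --- choosing, loop by loop, the smallest weights that keep every $x_\sigma$ nonnegative and verifying the resulting degree matches the claimed $m_i$ --- is where essentially all the effort goes, the first two steps being mechanical checks on the explicit vectors.
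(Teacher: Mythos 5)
Your proposal follows the same three-step scheme as the paper's proof: verify each $V_i$ is doubly even from the generators, identify $L(V_i)$ via its characteristic vector and the classification of Theorem~\ref{th2.3}, and then establish minimality and uniqueness by a sixteen-fold case analysis that pins down the weights $t_i$, $t_{ij}$, $t_{ijk}$, $t$ through the congruences of Remark~\ref{rem2.4}, the length bounds $l_{ij}$, $l_{ijk}$ against the candidate degree, and the nonnegativity of the equivalence-class cardinalities $x_\sigma$ (which is exactly what the paper packages as Properties~(\ref{properties4.1})). The approach is correct and matches the paper's; the only difference is that you state the reduction of the doubly-even check to the generators via the inclusion--exclusion weight identity explicitly, where the paper simply asserts it.
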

	
	\begin{proof}
		Each $V_{i}$, $i=1,\dots,16$, is clearly a doubly even binary code. Now, to prove that $C_{i}^{4}\simeq L(V_{i})$ we just need to find the characteristic vector associated to $L(V_{i})$ and apply the Theorem \ref{th2.3} (Classification of code loop of rank 4). Therefore, $V_{i}$ is a representation of  $C_{i}^{4}$, $i=1,\dots,16$.
		
		Now, we are going to prove, up to isomorphism, that $V_{i}$ is the unique minimal representation of $C_{i}^{4}$, $i=1,\dots,16$.  We suppose that $V=\left\langle v_{1},v_{2},v_{3},v_{4}\right\rangle $ is a minimal representation of $C_{i}^{4}$. To determine $V$ we calculate $t_i$, $t_{ij}$, $t_{ijk}$ and $t=t_{1234}$ using the following properties:
		\begin{equation}\label{properties4.1}\left. 
			\begin{array}{l}
				t\leq t_{ij4}\leq t_{ij}-t_{123}+t \,\, (i,j=1,2,3;\,\, i \neq j)\\
				t_{124}+t_{134}-t \leq t_{14} \leq t_{1}-t_{12}-t_{13}+t_{123}+t_{124}+t_{134}-t\\
				t_{124}+t_{234}-t \leq t_{24} \leq t_{2}-t_{12}-t_{23}+t_{123}+t_{124}+t_{234}-t\\
				t_{134}+t_{234}-t \leq t_{34} \leq t_{3}-t_{13}-t_{23}+t_{123}+t_{134}+t_{234}-t\\
				t_4 \geq t_{14}+t_{24}+t_{34}-t_{124}-t_{134}-t_{234}+t\\
			\end{array}
			\right.
		\end{equation}
		Let's prove the unicity of each $V_i$, for all $i=1,\cdots,16$, up to isomorphism.	 For a better visualization of these representations, see the diagrams presented in the \ref{diagrams}.
		
	\vspace{0.1cm}	 	 	
	{\itshape Case} $i=1$:	In this case, the characteristic vector is $\lambda_1 = (1110110100)$.
	Since $\mbox{deg}(V) \leq 8$,  $t_1\equiv  t_2\equiv t_3\equiv 4\;(\mbox{mod}\;8)$ and $t_{12}\equiv  t_{13}\equiv t_{23}\equiv 2 \;(\mbox{mod}\;4)$, then $t_1 = t_2 = t_3 =4$, $t_{12}=t_{13}=t_{23}=2$ and $t_{123}=1$. So we can assume $v_{1}=(1,2,3,4)$, $v_{2}=(1,2,5,6)$ and $v_{3}=(1,3,5,7)$.
	
	Next, we find $v_4$ using the fact that $t_4\equiv 0\;(\mbox{mod}\;8)$ and	$t_{14}\equiv  t_{24}\equiv t_{34}\equiv 0 \;(\mbox{mod}\;4)$. It is necessary to analyse two possible values for $t$: $0$ and $1$.

	Case $t=0$, we have $t_{ij4}=0$ and thus, $t_{i4}=0$, for $i,j=1,2,3$ and $i\neq j$, according to Properties (\ref{properties4.1}). It means that $\left\lbrace 1,\dots,7\right\rbrace \cap v_4 = \emptyset$. Therefore,  $\mbox{deg}(V) > 7+t_4\geq 15$ and then, we don't have minimal reduced representation in this case. Case $t=1$, we have $t_{ij4}=2$ and thus, $t_{i4}=4$, that is, $v_{i}\subset v_{4}$, $i=1,2,3$. Then, since $V$ is minimal, we obtain $t_4 = 8$ and we can assume $v_{4}=(1-8)$. Therefore $V=V_{1}$.


\vspace{0.1cm}
{\itshape Case} $i=2$: In this case, the characteristic vector is $\lambda_2 = (0000000000)$, so  $t_i \equiv 0\;(\mbox{mod}\;8)$ and $t_{ij}\equiv 0 \;(\mbox{mod}\;4)$ ($i\neq j$). Since $\mbox{deg}(V) \leq 14$, then $t_i = 8$ and $t_{ij}=4$; $i,j = 1,2,3$. Indeed, it's clear that $t_1 = 8$ and $t_{12}=t_{13}=4$. If  we assume that $t_2 = 16$ or $t_3 = 16$, we will have $l_{12}=|v_{1}\cup v_{2} | = 20 > \mbox{deg}(V)$ and $l_{13}=|v_{1}\cup v_{3} | = 20 > \mbox{deg}(V)$, which contradicts the minimality of the degree of $V$. Thus $t_2 = t_3 = 8$ and $t_{23}=4$. We also obtain that  $t_{123}=1$, since if $t_{123}=3$ gives us $l_{123}=|v_1 \cup v_2 \cup v_3|=15 > \mbox{deg}(V)$.

Therefore: $v_1 = (1-8)$, $v_2 = (1,2,3,4,9,10,11,12)$ and $v_3 = (1,5,6,7,9,10,11,13).$

To find $v_4$, we notice that $l_{123}=13$ and $\{1,\cdots,13\}\not\subseteq v_4$, because otherwise, $t_4 \geq 16$ and $\mbox{deg}(V)>14$. Thus $t_4 = 8$ and $v_4$ has 7 elements in the set $\{1,\cdots,13\}$ and a new element, that is, $v_4 = (i_1, \cdots, i_7, i_8 = 14), i_{j}\in \{1,\cdots, 13\}, j\in\{1,\cdots,7\}$. Furthermore, we need to find the weights $t, t_{i4}$ and $t_{ij4}$ that satisfy the following equation:
\begin{eqnarray}\label{eq4.2}
	t_{14} + t_{24} + t_{34} - t_{124} - t_{134} - t_{234} + t + 1 = 8.
\end{eqnarray}

We have two possibilities for $t$: $0$ or $1$. Accordingly with Properties (\ref{properties4.1}) and the minimality of $V$, we conclude that there is a unique solution to the Equation (\ref{eq4.2}):
$$t=1, \,\,t_{ij4} = 2\,\, \mbox{and}\,\, t_{i4} = 4.$$

This way we get $v_4 = (1,2,5,8,9,12,13,14)$ and $V=V_2$.


\vspace{0.1cm}
{\itshape Case} $i=3$: $\lambda_3 = (0000110100)$; $t_1\equiv  t_2\equiv t_3\equiv t_4 \equiv 0\;(\mbox{mod}\;8)$, $t_{12}\equiv  t_{13}\equiv t_{23}\equiv 2 \;(\mbox{mod}\;4)$ and $t_{14}\equiv  t_{24}\equiv t_{34}\equiv 0 \;(\mbox{mod}\;4)$.

Since $\mbox{deg}(V)\leq 12$, we have that $t_1 =8$, $t_{12} = 2$ or $6$ and $t_{13} = 2$ or $6$. We assume that $v_1 = (1-8)$. Case $t_{12} = 2$, we could consider, for instance, $v_2 = (1,2,9,10,11,12,13,14)$, but this would give us $\mbox{deg}(V)\geq 14$. Then, $t_{12} = 6$. As we want to find $V$ with the smallest possible degree, we have that $t_2 = 8$ e we can assume $v_2 = (1,2,3,4,5,6,9,10)$. In a similar way, we get $t_{13} = 6$, $t_3 = 8$ and $t_{23} = 6$.

Using the fact that $t_1 > t_{12} + t_{13} - t_{123}$, we obtain that $t_{123} = 5.$ We consider $v_3 = (1,2,3,4,5,7,9,11)$. So $v_1 \cup v_2 \cup v_3 = \{1,\cdots,11\}$.

To find $v_4$, first we note that $\{1,\cdots,11\} \not\subseteq v_4$, $\{1,\cdots,11\} \cap v_4 \neq \emptyset$ and $t_4 = 8$, because otherwise we would have $\mbox{deg}(V)> 12$. This way, we only have two possibilities for $v_4$: 
\begin{enumerate}
\item $v_4 = (i_1, \cdots,i_7, i_8 = 12)$, with $i_1, \cdots, i_7 \in \{1,\cdots,11\} $;
\item $v_4 = (i_1, \cdots, i_8)$, with $i_1, \cdots, i_8 \in \{1,\cdots,11\} $. 

\end{enumerate}

The next step is to determine the weights $t, t_{ij4}$ and $t_{i4}$ $(i,j = 1,2, 3)$ that satisfy the Properties (\ref{properties4.1}), the minimality of $V$ and one of the following equations: 
\begin{eqnarray}\label{equation4.3}
t_{14}+t_{24}+t_{34}-t_{124}-t_{134}-t_{234}+t + 1 = 8 \\ 
t_{14}+t_{24}+t_{34}-t_{124}-t_{134}-t_{234}+t = 8	\label{eq4.4}
\end{eqnarray}

We have six possibilities for $t$: $0, 1, 2, 3, 4$ or $5$. According to Table \ref{tab.V3}, we do not have a representation in cases $t= 0, 2, 3$ and $5$. For $t=0$, 	the Equations (\ref{equation4.3}) and (\ref{eq4.4}) has no solutions. For $t=2, 3$ and $t=5$, we have a contradiction related to the weigths $t_{i4}$. For $t=4$, there is a non-minimal representation and for $t=1$, we obtain a solution to the Equation (\ref{equation4.3}), so we can consider $v_4 = (1,6,7,8,9,10,11,12)$ and, thus, $V=V_3$. 

\vspace{-0.3cm}

\begin{table}[H]
\centering
\caption{$V_{3}$.\label{tab.V3}}
{\scalefont{0.8}\begin{tabular}{|l|l|l|}
		\hline
		$t=0$: $0 \leq t_{ij4} \leq 1 \Rightarrow t_{ij4} =0$ & $t=2$: $2 \leq t_{ij4} \leq 3 \Rightarrow t_{ij4} =2$                                                                                                                                                          &$t=4$: $4 \leq t_{ij4} \leq 5 \Rightarrow t_{ij4} =4$    \\ \hline \hline
		\begin{tabular}[c]{@{}l@{}} $0\leq t_{i4} \leq 1 \Rightarrow t_{i4} = 0$\end{tabular} & \begin{tabular}[c]{@{}l@{}} 		$2\leq t_{i4} \leq 3$ \end{tabular}                              & \begin{tabular}[c]{@{}l@{}} $4\leq t_{i4} \leq 5 \Rightarrow t_{i4} = 4$\end{tabular}                                                   \\ \hline \hline
		$t=1$: $1 \leq t_{ij4} \leq 2 \Rightarrow t_{ij4} =2$                                                                                                                                                                 & $t=3$: $3 \leq t_{ij4} \leq 4 \Rightarrow t_{ij4} =4$                                                                                                                                                          & $t=5$: $5 \leq t_{ij4} \leq 6 \Rightarrow t_{ij4} =6$                                                                                                                                                                                                                   \\ \hline \hline
		\begin{tabular}[c]{@{}l@{}} $3\leq t_{i4} \leq 5 \Rightarrow t_{i4} = 4$\end{tabular} & \begin{tabular}[c]{@{}l@{}} 		$5\leq t_{i4} \leq 6$ \end{tabular} & \begin{tabular}[c]{@{}l@{}} $7\leq t_{i4} \leq 8 \Rightarrow t_{i4} = 8$ 
		\end{tabular} \\ \hline
\end{tabular}}
\end{table}



{\itshape Case} $i=4$: $\lambda_4 = (0010100000)$; $t_1\equiv  t_2\equiv t_4\equiv 0\;(\mbox{mod}\;8)$; $t_3\equiv 4\;(\mbox{mod}\;8)$; $t_{12}\equiv 2 \;(\mbox{mod}\;4)$; $t_{13}\equiv t_{14}\equiv  t_{23}\equiv t_{24}\equiv t_{34}\equiv 0 \;(\mbox{mod}\;4)$.


First, we note that $t_{12} \leq 6$. For instance, if $t_{12} = 10$, we will have $t_{1} \geq 16$ and $t_2 \geq 16$, and so $l_{12}=|v_{1}\cup v_{2}|\geq 22 $  and thus will be greater than the degree of $V$. Since $t_{12} \leq 6$ and $l_{12} \leq \;\mbox{deg}(V)$, so we have $t_1 = t_2 = 8$ and so $t_{13} = t_{23} = 4$ and $t_{123} = 1 \, \mbox{or} \, 3$. We can consider $v_{1} = (1,2,3,4,5,6,7,8)$. 

Case $t_{12} = 2$, we will have $t_{123} = 1$. Since $t_3 > t_{13} + t_{23} - t_{123}$, we will obtain $t_3 \geq 12$. This gives us that $\mbox{deg}(V) > 18$, because $l_{123} = t_3 + 7 \geq 19$. Therefore $t_{12} = 6$ and, we can take $v_2 = (1,2,3,4,5,6,9,10)$. In this case we obtain $t_{123} = 3$, because $t_{12} + t_{13} - t_{123} < 8$. As we have  $t_3 > t_{13} + t_{23} - t_{123} $, that is, $t_3 > 5$, and $l_{123} < \mbox{deg}(V)$, then $t_3 = 12$. We consider $v_3 = (1,2,3,7,9,11,12,13,14,15,16,17)$. 

Now, to determine the generator $v_4$, note que $v_1 \cup v_2 \cup v_3 = \left\lbrace 1,\dots, 17 \right\rbrace \not\subseteq v_4$. So we have two possibilities  for $v_4$: $v_4 = (i_1, \dots, i_{15}, 18)$ or $v_4 = (i_1, \dots, i_7, 18)$, with $i_j \in \left\lbrace 1,\dots, 17\right\rbrace $, $j=1,\dots,15$. We need to find the values $t_{i4}, t_{ij4}$ and $t$ such that: 
\begin{equation}\label{eq4.3}
t_4 = t_{14}+t_{24}+t_{34}-t_{124}-t_{134}-t_{234}+t + 1.
\end{equation}

We have four possibilities for $t$: $0,1,2$ or $3$. Accordingly with Properties (\ref{properties4.1}) and the minimality of $V$, we conclude that there is a unique solution to the Equation (\ref{eq4.3}):
$$t=1, \,\,t_{ij4} = 2,\,\, t_{i4} = 4\,\, \mbox{and}\,\, t_4 = 8.$$

This way we get $v_4 = (1,4,7,8,9,10,11,18)$ and $V=V_4$.


\vspace{0.2cm}
{\itshape Case} $i=5$: $\lambda_5 = (0000010100)$; $t_1\equiv  t_2\equiv t_3\equiv t_4 \equiv 0\;(\mbox{mod}\;8)$; $t_{12}\equiv  t_{14}\equiv t_{24}\equiv t_{34} \equiv 0 \;(\mbox{mod}\;4)$; $t_{13}\equiv  t_{23}\equiv 2 \;(\mbox{mod}\;4)$.


Since $l_{13} < 18$ and $l_{23} < 18$, we obtain $t_{1} = t_{2} = t_{3} = 8$, $t_{13} \leq 6$ and $t_{23} \leq 6$. If $t_{13} = 10$, for instance, we will have $t_{1} \geq 16$, $t_{3} \geq 16$ and $l_{13} \geq 22 \geq \mbox{deg}(V)$, which contradicts the minimality of the degree of $V$. We also have $t_{12} = 4$ and so, $t_{123} = 1$ or $3$. As $t_{1} > t_{12} + t_{13} - t_{123}$, $t_{2} > t_{12} + t_{23} - t_{123}$ and $t_{3} > t_{13} + t_{23} - t_{123}$, that is, $t_{13} - t_{123} < 4$, $t_{23} - t_{123} < 4$ and $t_{23} + t_{13} < t_{3} + t_{123}$, we obtain $t_{123} = 1$ and $t_{13} = t_{23} =2$. Thus, it can be considered $v_{1} = (1-8)$, $v_{2} = (1,2,3,4,9,10,11,12)$ and $v_{3} = (1,5,9,13,14,15,16, 17)$.

Note that $v_{1}\cup v_{2}\cup v_{3} = \{1,\cdots,17\} \not\subseteq v_{4}$, then $t_{4} \leq 16$. Thus $v_{4} = (i_{1},\cdots,i_{15},18)$ or $v_{4} = (i_{1},\cdots,i_{7},18)$ where $i_{j} \in \{1,\cdots,17\}$. By (\ref{properties4.1}), this means that the weights $t_{i4}, t_{ij4}$ and $t$ need to satisfy one of the following equations:
\begin{eqnarray}
t_{14} + t_{24} + t_{34} - t_{124} - t_{134} - t_{234} + t = 15\\
t_{14} + t_{24} + t_{34} - t_{124} - t_{134} - t_{234} + t = 7
\end{eqnarray}


According to Table (\ref{tab.V5}), there is a solution only in the case where $t_{124}= t_{134} = t_{234} = 2$ and  $t_{14} = t_{24} = t_{34} = 4$. So consider $v_4 = (1,2,5,6,9,10,13,18)$ to get $V=V_5$.

\vspace{-0.4cm}
\begin{table}[H]

\centering \caption{$V_{5}$. \label{tab.V5}}
{\scalefont{0.8}\begin{tabular}{|l|l|}
\hline
$t=0$     & $t=1$                                                                                                                                                                                         \\ \hline \hline
\begin{tabular}[c]{@{}l@{}}$0 \leq t_{124} \leq 3 \Rightarrow t_{124} =0$ or $2$\\ 			$0 \leq t_{134} \leq 1 \Rightarrow t_{134} =0$\\ 			$0 \leq t_{234} \leq 1 \Rightarrow t_{234} =0$\end{tabular} & \begin{tabular}[c]{@{}l@{}}$2 \leq t_{124} \leq 4 \Rightarrow t_{124} =2$ or $4$\\ $1 \leq t_{134} \leq 2 \Rightarrow t_{134} =2$\\ $1 \leq t_{234} \leq 2 \Rightarrow t_{234} =2$\end{tabular} \\ \hline
$t_{124}= t_{134} = t_{234} = 0$                                                                                                                                                                       & $t_{124}= t_{134} = t_{234} = 2$                                                                                                                                                                \\ \hline \hline
\begin{tabular}[c]{@{}l@{}}$0 \leq t_{14} \leq 3 \Rightarrow t_{14} =0$\\ $0\leq t_{24} \leq 3 \Rightarrow t_{24} = 0$\\ $0\leq t_{34} \leq 5 \Rightarrow t_{34} = 0$ or $4$\end{tabular}              & \begin{tabular}[c]{@{}l@{}}$3 \leq t_{14} \leq 6 \Rightarrow t_{14} =4$\\ $3\leq t_{24} \leq 6 \Rightarrow t_{24} = 4$\\ $3\leq t_{34} \leq 8 \Rightarrow t_{34} = 4$ or $8$\end{tabular}       \\ \hline
$t_{124}= 2$; $t_{134} = t_{234} = 0$                                                                                                                                                               & $t_{124}= 4$; $t_{134} = t_{234} = 2$                                                                                                                                                        \\ \hline \hline
\begin{tabular}[c]{@{}l@{}}$2 \leq t_{14} \leq 5 \Rightarrow t_{14} =4$\\ $2\leq t_{24} \leq 5 \Rightarrow t_{24} = 4$\\ $0\leq t_{34} \leq 5 \Rightarrow t_{34} = 0$ or $4$\end{tabular}              & \begin{tabular}[c]{@{}l@{}}$5 \leq t_{14} \leq 8 \Rightarrow t_{14} =8$\\ $5\leq t_{24} \leq 8 \Rightarrow t_{24} = 8$\\ $3\leq t_{34} \leq 8 \Rightarrow t_{34} = 4$ or $8$\end{tabular}       \\ \hline 
\end{tabular}}
\end{table}



{\itshape Case} $i=6$: $\lambda_6 = (1111110100)$; $t_1\equiv  t_2\equiv t_3\equiv t_4 \equiv 4\;(\mbox{mod}\;8)$; $t_{12}\equiv  t_{13}\equiv t_{23}\equiv 2 \;(\mbox{mod}\;4)$; $t_{14}\equiv  t_{24}\equiv t_{34}\equiv 0 \;(\mbox{mod}\;4)$.

%
As $\mbox{deg}(V)\leq 11$, then $t_{1} = 4$, which implies $t_{12} = t_{13} = 2$ and $t_{123} =1$. Furthermore, the fact that $l_{12}<11$ and $l_{13} < 11$ gives us	$t_{2} = t_{3} = 4$ and $t_{23} =2$. Thus, consider $v_{1} = (1,2,3,4)$, $v_{2} = (1,2,5,6)$ and $v_{3} = (1,3,5,7)$. To find $v_{4}$, note that the minimality of the degree of V implies that $\{1,\cdots,7\} \not\subseteq v_{4}$, and so, $t_{4} = 4$. Thus, $t_{i4} = 0$, and this way, $t = t_{ij4} = 0$. Therefore, $\{1,\cdots,7\} \cap v_{4} = \emptyset$, which implies $v_{4} = (8,9,10,11)$ and $V=V_{6}$.



\vspace{0.2cm}
{\itshape Case} $i=7$: $\lambda_7 = (0001000000)$; $t_1\equiv  t_2\equiv t_3 \equiv 0\;(\mbox{mod}\;8)$; $t_4 \equiv 4\;(\mbox{mod}\;8)$; $t_{12}\equiv  t_{13}\equiv 0 \;(\mbox{mod}\;4)$; $t_{23} \equiv t_{14}\equiv  t_{24}\equiv t_{34}\equiv 0 \;(\mbox{mod}\;4)$.


As $l_{ij}  < 16$, we have $t_{i} = 8$ and $t_{ij} = 4$, for $i,j = 1,2,3$. If, for instance, $t_{1} = t_{2} = 16$ and $t_{12} = 12$, we will have $l_{12} = 20$. Suppose $v_{1} = (1-8)$ and $v_{2} = (,2,3,4,9,10,11,12)$. 

Note that there are two possibitilies for $t_{123}$: $1$ or $3$. Both cases were analyzed and the results are shown in the Table \ref{tab.V7}.

We assume $v_3 = (1,5,6,7,9,10,11,13)$ in the first case and  $v_3 = (1,2,3,5,9,13,14,15)$ in the second case. By item (1.1.1.), we have $\{1,\cdots,13\}\cap v_4 = \emptyset$. If $t_4 = 4 $ and $v_4 = (14,15,16,17)$, for instance, we would obtain a representation with a degree greater than 16. In items (1.1.2.), (1.2.1.), (1.2.2.), (2.1.), (2.2.) and (2.3.) we also get non-minimal representations. In items (1.1.3.), (1.2.3.) and cases similar to these, we have a contradiction  with the hypothesis $t_{14} \equiv 0\;(\mbox{mod}\;4)$. Finally, the minimal representation is determined in the last case because  $t_{14} + t_{24} + t_{34} - t_{124} - t_{134} - t_{234} + t = 15$. Then, if $t_4 = 16$, we can consider $v_4 = (1,\cdots,16)$ and therefore, $V=V_7$.

\vspace{-0.1in}
\begin{table}[H]

\centering \caption{$V_{7}$. \label{tab.V7}}
{\scalefont{0.7}\begin{tabular}{|ll|}
\hline
\multicolumn{2}{|l|}{\textbf{Case 1:} $t_{123} = 1$}                                                     \\ \hline \hline
\multicolumn{1}{|l|}{1.1. $t=0$: $0 \leq t_{ij4} \leq 3 \Rightarrow t_{ij4} =2$ or $4$ }                                                                                                                                                           & 1.2. $t=1$: $1 \leq t_{ij4} \leq 4 \Rightarrow t_{ij4} =2$ or $4$                                                                                                                                                                                            \\ \hline
\multicolumn{1}{|l|}{\begin{tabular}[c]{@{}l@{}}1.1.1. $t_{124}= t_{134} = t_{234} = 0$\\ $\Rightarrow 0 \leq t_{i4} \leq 1 \Rightarrow t_{i4} =0$\end{tabular}}                             & \begin{tabular}[c]{@{}l@{}}1.2.1. $t_{124}= t_{134} = t_{234} = 2$\\ $\Rightarrow 3 \leq t_{i4} \leq 4 \Rightarrow t_{i4} =4$ \\ \end{tabular} \\ \hline
\multicolumn{1}{|l|}{\begin{tabular}[c]{@{}l@{}}1.1.2. $t_{124}= t_{134} = t_{234} = 2$\\ $\Rightarrow 4 \leq t_{i4} \leq 5 \Rightarrow t_{i4} =4$\end{tabular}} & \begin{tabular}[c]{@{}l@{}}1.2.2. $t_{124}= t_{134} = t_{234} = 4$\\ $\Rightarrow 7 \leq t_{i4} \leq 8 \Rightarrow t_{i4} =8$\\ \end{tabular}  \\ \hline
\multicolumn{1}{|l|}{\begin{tabular}[c]{@{}l@{}}1.1.3. $t_{124}= 2$; $t_{134} = t_{234} = 0$\\ $\Rightarrow 2 \leq t_{14} \leq 3$\end{tabular}}                                           & \begin{tabular}[c]{@{}l@{}}1.2.3. $t_{124}= 4$; $t_{134} = t_{234} = 2$\\ $\Rightarrow 5 \leq t_{14} \leq 6$\end{tabular}                                                                           \\ \hline \hline
\multicolumn{2}{|l|}{\textbf{Case 2:} $t_{123} = 3$}                                                \\ \hline \hline
\multicolumn{1}{|l|}{2.1. $t=0$: $0 \leq t_{ij4} \leq 1 \Rightarrow t_{ij4} =0$}                                                                                                                                                             & 2.3. $t=2$: $2 \leq t_{ij4} \leq 3 \Rightarrow t_{ij4} =2$                                                                                                                                                                                             \\ \hline
\multicolumn{1}{|l|}{\begin{tabular}[c]{@{}l@{}} $0\leq t_{i4} \leq 3 \Rightarrow t_{i4} = 0$\end{tabular}}                                  & \begin{tabular}[c]{@{}l@{}} $2\leq t_{i4} \leq 5 \Rightarrow t_{i4} = 4$\end{tabular}                                                                  \\ \hline
\multicolumn{1}{|l|}{2.2. $t=1$: $1 \leq t_{ij4} \leq 2 \Rightarrow t_{ij4} =2$}                                                                                                                                                             & 2.4. $t=3$: $3 \leq t_{ij4} \leq 4 \Rightarrow t_{ij4} =4$                                                                                                                                                                                             \\ \hline
\multicolumn{1}{|l|}{\begin{tabular}[c]{@{}l@{}} $3\leq t_{i4} \leq 6 \Rightarrow t_{i4} = 4$\end{tabular}}                                  & \begin{tabular}[c]{@{}l@{}} $5\leq t_{i4} \leq 8 \Rightarrow t_{i4} = 8$\end{tabular}                                                                  \\ \hline
\end{tabular}
}
\end{table}



{\itshape Case} $i=8$: $\lambda_8 = (0000001000)$;  $t_1\equiv  t_2\equiv t_3\equiv t_4 \equiv 0\;(\mbox{mod}\;8)$; $t_{12}\equiv  t_{13}\equiv t_{23}\equiv 0 \;(\mbox{mod}\;4)$; $t_{24}\equiv  t_{34}\equiv 0 \;(\mbox{mod}\;4)$; $t_{14}\equiv 2 \;(\mbox{mod}\;4)$.


%
As $l_{ij} < 17$, we need to have $t_{i}=8$ and $t_{ij}=4$, for $i,j = 1,2,3$. Assume $v_{1} = (1-8)$ and $v_{2} = (1,2,3,4,9,10,11,12)$. There two cases to analyse (see Table \ref{tab.V8}).

In case 1,  we assume that $v_3 = (1,5,6,7,9,10,11,13)$. In (1.1.1.), if 	$0 \leq t_{i4} \leq 1$ then $t_{i4} =0$ and thus, $\{1,\cdots,13\}\cap v_4 = \emptyset$. Soon $t_4 \geq 8$ and $\mbox{deg}(V) \geq 21$, that is, we will obtain a non-minimal representation. In other cases, we find contradictions related to the weights $t_{i4}$. The other cases are analyzed in a similar way. On the other hand, in case 2, we assume $v_3 = (1,2,3,5,9,13,14,15)$.
If $t=0$, we will have $t_{14} + t_{24} + t_{34} - t_{124} - t_{134} - t_{234} + t = 2$. Then, if $t_4 = 8$ and if we consider $v_4 = (i_1, i_2, 16,\cdots,21)$, with $i_1, i_2 \in \{1,\cdots,15\}$ and $i_1, i_2 \in v_1 \cap v_4$, we will have a non-minimal representation.
If $t=1$ and $t=3$, we will also find  non-minimal representations with degree equal to $21$ and $18$, respectively.  If $t=2$, then $t_4 \geq t_{14} + t_{24} + t_{34} - t_{124} - t_{134} - t_{234} + t = 6$. If we assume $t_4 = 8$ and $v_4 = (1,2,9,10,13,14,16,17)$, we obtain $V= V_8$.



\vspace{-0.3cm}
\begin{table}[H]
\centering
\caption{$V_{8}$. \label{tab.V8}}
{\scalefont{0.8}
\begin{tabular}{|ll|}
\hline 
\multicolumn{2}{|l|}{\textbf{Case 1:} $t_{123} = 1$}                                                                                                                                                                 \\ \hline \hline
\multicolumn{1}{|l|}{1.1. $t=0$: $0 \leq t_{ij4} \leq 3 \Rightarrow t_{ij4} =0$ or $2$}                                  & 1.2. $t=1$: $1 \leq t_{124} \leq 4 \Rightarrow t_{124} =2$ or $4$                \\ \hline
\multicolumn{1}{|l|}{1.1.1. $t_{124}= t_{134} = t_{234} = 0$ $\Rightarrow$ $0 \leq t_{i4} \leq 1 \Rightarrow t_{i4} =0$} & 1.2.1. $t_{124}= t_{134} = t_{234} = 2$ $\Rightarrow$ $3 \leq t_{14} \leq 4$     \\ \hline
\multicolumn{1}{|l|}{1.1.2. $t_{124}= 2$; $t_{134} = t_{234} = 0$ $\Rightarrow$ $2\leq t_{24} \leq 3$}                   & 1.2.2. $t_{124}= 2$; $t_{134} = t_{234} = 4$ $\Rightarrow$ $5\leq t_{24} \leq 6$ \\ \hline
\multicolumn{1}{|l|}{1.1.3. $t_{124}= t_{134} = t_{234} = 2$ $\Rightarrow$ $4\leq t_{14} \leq 5$}                        & 1.2.3. $t_{124} = t_{134} = t_{234} = 4$ $\Rightarrow$ $7\leq t_{14} \leq 8$     \\ \hline \hline
\multicolumn{2}{|l|}{\textbf{Case 2:} $t_{123} = 3$}                                                                                                                                                                 \\ \hline \hline
\multicolumn{1}{|l|}{2.1. $t=0$: $0 \leq t_{ij4} \leq 1 \Rightarrow t_{ij4} =0$}                                         & 2.3. $t=2$: $2 \leq t_{ij4} \leq 3 \Rightarrow t_{ij4} =2$                       \\ \hline
\multicolumn{1}{|l|}{$0\leq t_{i4} \leq 3 \Rightarrow t_{14} = 2$; $t_{24} = t_{34} = 0$}                                & $2\leq t_{i4} \leq 5 \Rightarrow t_{14} = 2$; $t_{24} = t_{34} = 4$              \\ \hline
\multicolumn{1}{|l|}{2.2. $t=1$: $1 \leq t_{ij4} \leq 2 \Rightarrow t_{ij4} =2$}                                         & 2.4. $t=3$: $3 \leq t_{ij4} \leq 4 \Rightarrow t_{ij4} =4$                       \\ \hline
\multicolumn{1}{|l|}{$3\leq t_{i4} \leq 6 \Rightarrow t_{14} = 6$; $t_{24} = t_{34} = 4$}                                & $5\leq t_{i4} \leq 8 \Rightarrow t_{14} = 6$; $t_{24} = t_{34} = 8$              \\ \hline
\end{tabular}
}
\end{table}



\vspace{-0.1cm}
{\itshape Case} $i=9$: $\lambda_9 = (0100001000)$; $t_1\equiv t_3\equiv t_4 \equiv 0\;(\mbox{mod}\;8)$; $t_2 \equiv 4\;(\mbox{mod}\;8)$; $t_{14}\equiv  2 \;(\mbox{mod}\;4)$; $t_{12}\equiv  t_{13}\equiv t_{23}\equiv t_{24} \equiv t_{34}\equiv 0 \;(\mbox{mod}\;4)$.


We affirm that $t_{1} = 8$, $t_{2} = 12$ and $t_{12} = 4$. Indeed, first note that $l_{12} < 19$ implies $t_{1} \leq 16$. If $t_{1} = 8$, it is clear that $t_{12} = 4$ and $t_{2} > 4$. Since $l_{12} < 19$, so $t_{2} < 15$, which implies that $t_{2} = 12$. If $t_{1} = 16$, we will have $t_{12} = 4, 8$ or $12$. But, in order to have $l_{12} < 19$, we should have $t_{2} - t_{12} < 3$, which does not happen for any value of  $t_{2}$ and $t_{12}$. Let $v_{1} = (1-8)$ and $v_{2} = (1,2,3,4,9,10,11,12,13,14,15,16)$. Note that $t_{13} = 4$ and $t_{3} = 8$, because $t_{1} = 8$ and $l_{13} < 19$. So, $t_{23} = 4$.

Now, let's find $v_{3}$ and $v_4$. The possible cases for the remaining weights are shown in Table \ref{tab.V9}. In case 1, we assume $v_3 = (1,5,6,7,9,10,11,17).$ 
For items (1.1.1.), (1.1.2.) - (1.1.7.), (1.2.1.) - (1.2.4.), (1.2.7.) and (1.2.8), we have contradictions related to the weights $t_{i4}$. In items (1.1.3.), (1.2.5.) and (1.2.6.), we obtain representations of degree equal to $21$, $20$ and $22$, respectively.
In item (1.1.8.), since we have $t_{14} + t_{24} + t_{34} - t_{124} - t_{134} - t_{234} + t = 6$,  we can consider  $v_4 = (i_1, \cdots, i_{6}, 18,19)$, with $\{i_1,\cdots, i_{6}\} \subset \{1,\cdots,17\}$, and, therefore, $V=V_9$. In case 2, we can consider $v_3 = (1,2,3,5,9,17,18,19)$. In this case, if there is another representation of degree 19, we must have $v_4 \subset \{1,\cdots, 19\}$. Soon $t_4 = 8$ or $t_4 = 16$, and $t_{14} + t_{24} + t_{34} - t_{124} - t_{134} - t_{234} + t = 8$ or $t_{14} + t_{24} + t_{34} - t_{124} - t_{134} - t_{234} + t = 16$. After analyzing all the possibilities for $t$, we see that these equations have no solution.

\vspace{-0.2cm}
\begin{table}[H] 

\centering \caption{$V_{9}$. \label{tab.V9}}
{\scalefont{0.8}
\begin{tabular}{|ll|}
\hline
\multicolumn{2}{|l|}{\textbf{Case 1:} $t_{123} = 1$}                                                                                                                                                                                                                                                                                                                                                                                                                                                        \\ \hline \hline
\multicolumn{1}{|l|}{1.1. $t=0$: $0 \leq t_{ij4} \leq 3 \Rightarrow t_{ij4} =0$ or $2$}                                                                                                                                                                     & 1.2. $t=1$: $1 \leq t_{ij4} \leq 4 \Rightarrow t_{ij4} =2$ or $4$                                                                                                                                                                    \\ \hline 
\multicolumn{1}{|l|}{1.1.1. $t_{124}= t_{134} = t_{234} = 0 \Rightarrow  0 \leq t_{14} \leq 1$}                                                                                                                                                             & 1.2.1. $t_{124}= t_{134} = t_{234} = 2$ $\Rightarrow 3 \leq t_{14} \leq 4$                                                                                                                                                           \\ \hline
\multicolumn{1}{|l|}{1.1.2. $t_{124}= t_{134} = t_{234} = 2 \Rightarrow 4 \leq t_{14} \leq 5 $}                                                                                                                                                             & 1.2.2. $t_{124}= t_{134} = t_{234} = 4$ $\Rightarrow 7 \leq t_{14} \leq 8$                                                                                                                                                           \\ \hline
\multicolumn{1}{|l|}{\begin{tabular}[c]{@{}l@{}}1.1.3. $t_{124}= 2$; $t_{134} = t_{234} = 0$\\  $\Rightarrow 2 \leq t_{14} \leq 3 \Rightarrow t_{14} =2;$  $2\leq t_{24} \leq 7\Rightarrow t_{24} = 4;$ \\ $0\leq t_{34} \leq 1 \Rightarrow t_{34} = 0$\end{tabular}}  & 1.2.3. $t_{124}= t_{134} = 2$; $t_{234} = 4$ $\Rightarrow 3 \leq t_{14} \leq 4$                                                                                                                                                      \\ \hline
\multicolumn{1}{|l|}{1.1.4. $t_{124}= t_{134} = 2$; $t_{234} =  0$ $\Rightarrow 4 \leq t_{14} \leq 5$}                                                                                                                                                      & 1.2.4. $t_{124}= t_{134} = 4$; $t_{234} = 2$ $\Rightarrow 7 \leq t_{14} \leq 8$                                                                                                                                                      \\ \hline
\multicolumn{1}{|l|}{1.1.5. $t_{124}= t_{234} = 2$; $t_{134} = 0$ $\Rightarrow 2\leq t_{34} \leq 3 $}                                                                                                                                                       & \begin{tabular}[c]{@{}l@{}}1.2.5. $t_{124}= 2$; $t_{134} = t_{234} = 4$ \\ $ \Rightarrow 5 \leq t_{14} \leq 6 \Rightarrow t_{14} =6;$ $5\leq t_{24} \leq 10 \Rightarrow t_{24} = 8;$\\  $7\leq t_{34} \leq 8 \Rightarrow t_{34} = 8$\end{tabular} \\ \hline
\multicolumn{1}{|l|}{1.1.6. $t_{124}= t_{134} = 0$; $t_{234} = 2$ $\Rightarrow 0 \leq t_{14} \leq 1$}                                                                                                                                                       & \begin{tabular}[c]{@{}l@{}}1.2.6. $t_{124}= 4$; $t_{134} = t_{234} = 2$\\  $\Rightarrow 5 \leq t_{14} \leq 6 \Rightarrow t_{14} =6;$ $5\leq t_{24} \leq 10 \Rightarrow t_{24} = 8;$ \\ $3\leq t_{34} \leq 4 \Rightarrow t_{34} = 4$\end{tabular} \\ \hline
\multicolumn{1}{|l|}{1.1.7. $t_{124}= t_{234} = 0$; $t_{134} =  2$ $\Rightarrow 2\leq t_{34} \leq 3$}                                                                                                                                                       & 1.2.7. $t_{124}= t_{234} = 4$; $t_{134} =  2$ $\Rightarrow 7 \leq t_{14} \leq 8$                                                                                                                                                     \\ \hline
\multicolumn{1}{|l|}{\begin{tabular}[c]{@{}l@{}}1.1.8. $t_{124}= 0$; $t_{134} = t_{234} = 2$ \\ $ \Rightarrow  2 \leq t_{14} \leq 3 \Rightarrow t_{14} =2;$ $2\leq t_{24} \leq 5 \Rightarrow t_{24} = 4;$  \\ $4\leq t_{34} \leq 5 \Rightarrow t_{34} = 4$\end{tabular}} & 1.2.8. $t_{124}= t_{234} = 2$; $t_{134} = 4$ $\Rightarrow 5 \leq t_{34} \leq 6$                                                                                                                                                      \\ \hline \hline
\multicolumn{2}{|l|}{\textbf{Case 2:} $t_{123} = 3$}                                                                                                                                                                                                                                                                                                                                                                                                                                                        \\ \hline \hline
\multicolumn{1}{|l|}{2.1. $t=0$: $0 \leq t_{ij4} \leq 1 \Rightarrow t_{ij4} =0$}                                                                                                                                                                            & 2.3. $t=2$: $2 \leq t_{ij4} \leq 3 \Rightarrow t_{ij4} =2$                                                                                                                                                                           \\ \hline 
\multicolumn{1}{|l|}{\begin{tabular}[c]{@{}l@{}}$ 0\leq t_{14} \leq 3 \Rightarrow t_{14} = 2$;  $0\leq t_{34} \leq 3 \Rightarrow t_{34} = 0$;\\ $0\leq t_{24} \leq 7 \Rightarrow t_{24} = 0$ or $4$\end{tabular}}                                           & \begin{tabular}[c]{@{}l@{}}$2\leq t_{14} \leq 5 \Rightarrow t_{14} = 2$; $2\leq t_{34} \leq 5 \Rightarrow t_{34} = 4$;\\ $2\leq t_{24} \leq 9 \Rightarrow t_{24} = 4$ or $8$\end{tabular}                                            \\ \hline
\multicolumn{1}{|l|}{2.2. $t=1$: $1 \leq t_{ij4} \leq 2 \Rightarrow t_{ij4} =2$}                                                                                                                                                                            & 2.4. $t=3$: $3 \leq t_{ij4} \leq 4 \Rightarrow t_{ij4} =4$                                                                                                                                                                           \\ \hline
\multicolumn{1}{|l|}{\begin{tabular}[c]{@{}l@{}}$3\leq t_{14} \leq 6 \Rightarrow t_{14} = 6$; $3\leq t_{34} \leq 6 \Rightarrow t_{34} = 4$;\\ $3\leq t_{24} \leq 10 \Rightarrow t_{24} = 4$ or $8$\end{tabular}}                                            & \begin{tabular}[c]{@{}l@{}}$ 5\leq t_{14} \leq 8 \Rightarrow t_{14} = 6$;  $5\leq t_{34} \leq 8 \Rightarrow t_{34} = 8$;\\ $5\leq t_{24} \leq 12 \Rightarrow t_{24} = 8$ or $12$\end{tabular}                                        \\ \hline
\end{tabular}
}
\end{table}




{\itshape Case} $i=10$: $\lambda_{10} = (0001111000)$; $t_1\equiv  t_2\equiv t_3 \equiv 0\;(\mbox{mod}\;8)$; $t_4 \equiv 4\;(\mbox{mod}\;8)$; $t_{12}\equiv  t_{13}\equiv 2 \;(\mbox{mod}\;4)$; $t_{14}\equiv 2 \;(\mbox{mod}\;4)$; $t_{23}\equiv  t_{24}\equiv t_{34}\equiv 0 \;(\mbox{mod}\;4)$.


Let's prove that $t_{1} = t_{2} = 8$ and $t_{12} =2$. If $t_{1} = 8$, then $t_{12} = 2$ or $6$. Case $t_{12} =6$, $l_{12} < 19$ implies $t_{2} = 8$ or $16$. There isn't representations in this case. Indeed, if $t_{2} = 8$ and $l_{23} < 19$, then $t_{23} = 4$ and $t_{3} = 8$. As $t_{12} + t_{23} - t_{123} < t_{2}$, so $t_{123} = 3$, which implies $t_{13} > 2$. This is an absurd, because we need to have $t_{12} + t_{13} - t_{123} < t_{1} = 8$. The analysis for $t_{2} = 16$ in this case is analogous. Therefore, $t_{12} = 2$. Furthemore, $l_{12} < 19$ implies $t_{2} = 8$. Assume $v_{1} = (1-8)$ and $v_{2} = (1,2,9,10,11,12,13,14)$. If $t_{1} = 16$, then $t_{2} - t_{12} < 3$, which only has a solution if $t_{2} = 8$ and $t_{12} = 6$. There is also no representation in this case.

By $t_{12} = 2$, $t_{2} = 8$ and $l_{23} < 19$, note that $t_{123} = 1$, $t_{23} = 4$ and $t_{3} = 8$. We use the fact that $t_{13} + t_{23} - t_{123} < t_{3}$ to obtain $t_{13} = 2$. Then, we can assume $v_{3} = (1,3,9,10,11,15,16,17)$.

To find $v_4$, we anlyze the possibilities for the weights $t, t_{i4}$ and $t_{ij4}$, presented in Table \ref{tab.V10}.  In items (1.2.), (2.1.) and (2.2.), we have representations of degree 	$21$, $20$ and $22$, respectively. As last, in item (1.1.), we have $t_{14} + t_{24} + t_{34} - t_{124} - t_{134} - t_{234} + t = 2$, so $t_4 = 4$ and we can consider $v_4 = (4,5,18,19)$. Therefore, $V= V_{10}$.

\vspace{-0.8cm}

\begin{table}[H]
\centering
\caption{$V_{10}$. \label{tab.V10}}
{\scalefont{0.8}
\begin{tabular}{|l|l|}
\hline
1.	$t=0$                                                                                                                                                                                                                                    & 2. $t=1$                                                                                                                                                                                                                                    \\ \hline \hline
\begin{tabular}[c]{@{}l@{}}$0 \leq t_{124} \leq 1 \Rightarrow t_{124} =0$\\ $0 \leq t_{134} \leq 1 \Rightarrow t_{134} =0$\\ $0 \leq t_{234} \leq 3 \Rightarrow t_{234} =0$ or $2$\end{tabular}                                          & \begin{tabular}[c]{@{}l@{}}$1 \leq t_{124} \leq 2 \Rightarrow t_{124} =2$\\ $1 \leq t_{134} \leq 2 \Rightarrow t_{134} =2$\\ $1 \leq t_{234} \leq 4 \Rightarrow t_{234} =2$ or $4$\end{tabular}                                          \\ \hline
\begin{tabular}[c]{@{}l@{}} 1.1. $t_{124}= t_{134} = t_{234} = 0$ \\ $\Rightarrow 0 \leq t_{14} \leq 5 \Rightarrow t_{14} =2$; $0\leq t_{24} \leq 3 \Rightarrow t_{24} = 0$;\\  $0\leq t_{34} \leq 3 \Rightarrow t_{34} = 0$\end{tabular}      & \begin{tabular}[c]{@{}l@{}}2.1. $t_{124}= t_{134} = t_{234} = 2$\\ $ \Rightarrow 3 \leq t_{14} \leq 8 \Rightarrow t_{14} =6$; $3\leq t_{24} \leq 6 \Rightarrow t_{24} = 4$; \\ $3\leq t_{34} \leq 6 \Rightarrow t_{34} = 4$\end{tabular}      \\ \hline
\begin{tabular}[c]{@{}l@{}}1.2. $t_{124}= t_{134} = 0$; $t_{234} = 2$ \\ $\Rightarrow 0 \leq t_{14} \leq 5 \Rightarrow t_{14} =2$; $2\leq t_{24} \leq 5 \Rightarrow t_{24} = 4$; \\ $2\leq t_{34} \leq 5 \Rightarrow t_{34} = 4$\end{tabular} & \begin{tabular}[c]{@{}l@{}} 2.2. $t_{124}= t_{134} = 2$; $t_{234} = 4$ \\ $\Rightarrow 3 \leq t_{14} \leq 8 \Rightarrow t_{14} =6$; $5\leq t_{24} \leq 8 \Rightarrow t_{24} = 8$; \\ $5\leq t_{34} \leq 8 \Rightarrow t_{34} = 8$\end{tabular} \\ \hline
\end{tabular}
}
\end{table}



\vspace{-0.3cm}
{\itshape Case} $i=11$: $\lambda_{11} = (0001001000)$; $t_1\equiv t_2\equiv t_3 \equiv 0\;(\mbox{mod}\;8)$; $t_4 \equiv 4\;(\mbox{mod}\;8)$; $t_{14}\equiv  2 \;(\mbox{mod}\;4)$; $t_{12}\equiv  t_{13}\equiv t_{23}\equiv t_{24} \equiv t_{34}\equiv 0 \;(\mbox{mod}\;4)$.

%
Firt, note that $t_{1} < 16$. If $t_{1} = 16$, for instance, we will have $t_{12} = 4,8$ or $12$. Case $t_{12} = 4$: since $l_{12} < 17$ so $t_{2} < 5$, which is an absurd. Case $t_{12} = 8$ or $12$: $t_{2} \geq 16$, which give us $l_{12} > 17$, which can't happen either. Therefore, $t_{1} = 8$ and, this way $t_{12} = t_{13} = 4$. By $l_{12} < 17$, we get $t_{2} = 8$. Analogously, $t_{3} = 8$ and $t_{23} = 4$. Assume $v_{1} = (1-8)$ and $v_{2} = (1,2,3,4,9,10,11,12)$.


Let's find $v_{3}$ and $v_{4}$. According to Table \ref{tab.V11}, there are two cases to analyze: $t_{123} = 1$ and $t_{123} = 3$. In case 1, we don't have representations. In case 2, we can consider $v_3 = (1,2,3,5,9,13,14,15)$. In item (2.2.) we also dont't have a representation. In last two cases (2.3. and 2.4.) we have non-minimal representations. But if $t=0$, that is, in item (2.1.), we have $t_{14} + t_{24} + t_{34} - t_{124} - t_{134} - t_{234} + t = 2$. Then if $t_4 = 4$ and $v_4 = (6,7,16,17)$ we obtain $V=V_{11}$. If $t_4 = 12$, we will obtain a representation with degree greater than $17$.

\vspace{-0.6cm}
\begin{table}[H]
\centering
\caption{$V_{11}$. \label{tab.V11}}
{\scalefont{0.8}
\begin{tabular}{|ll|}
\hline
\multicolumn{2}{|l|}{\textbf{Case 1:} $t_{123} = 1$}                                                                                                                                                                                                                                                                                                                                                                                                                                                                                                                                                                                                                                                                                                                                                                                                                                                                                                                                                                                                                                                                                                                                                                                                                                                                                    \\ \hline \hline
\multicolumn{1}{|l|}{1.1. $t=0$: $0 \leq t_{ij4} \leq 3 \Rightarrow t_{ij4} =0$ or $2$}                                                                                                                                                                                                                                                                                                                                                                                                                                                                                                                                                                                  & 1.2. $t=1$: $1 \leq t_{ij4} \leq 4 \Rightarrow t_{ij4} =2$ or $4$                                                                                                                                                                                                                                                                                                                                                                                                                                                                                                                                                                                   \\ \hline
\multicolumn{1}{|l|}{\begin{tabular}[c]{@{}l@{}}$t_{124}= t_{134} = t_{234} = 0$ $\Rightarrow 0 \leq t_{14} \leq 1$\\ $t_{124}= t_{134} = t_{234} = 2$ $\Rightarrow 4 \leq t_{14} \leq 5$\\ $t_{124}= 2$; $t_{134} = t_{234} = 0$ $\Rightarrow 2\leq t_{24} \leq 3$\\ $t_{124}= t_{134} = 2$; $t_{234} =  0$ $\Rightarrow 4 \leq t_{14} \leq 5$\\ $t_{124}= t_{234} = 2$; $t_{134} = 0$ $\Rightarrow 2\leq t_{34} \leq 3 $\\ $t_{124}= t_{134} = 0$; $t_{234} = 2$ $\Rightarrow 0 \leq t_{14} \leq 1$\\ $t_{124}= t_{234} = 0$; $t_{134} =  2$ $\Rightarrow 2\leq t_{34} \leq 3$\\ $t_{124}= 0$; $t_{134} = t_{234} = 2$ $\Rightarrow 2\leq t_{24} \leq 3$\end{tabular}} & \begin{tabular}[c]{@{}l@{}}$t_{124}= t_{134} = t_{234} = 2$ $\Rightarrow 3 \leq t_{14} \leq 4$\\ $t_{124}= t_{134} = t_{234} = 4$ $\Rightarrow 7 \leq t_{14} \leq 8$\\ $t_{124}= t_{134} = 2$; $t_{234} = 4$ $\Rightarrow 3 \leq t_{14} \leq 4$\\ $t_{124}= t_{134} = 4$; $t_{234} = 2$ $\Rightarrow 7 \leq t_{14} \leq 8$\\ $t_{124}= 2$; $t_{134} = t_{234} = 4$ $\Rightarrow 5\leq t_{24} \leq 6$\\ $t_{124}= 4$; $t_{134} = t_{234} = 2$ $\Rightarrow 5\leq t_{24} \leq 6$\\ $t_{124}= t_{234} = 4$ ; $t_{134} =  2$ $\Rightarrow 5 \leq t_{34} \leq 6$\\ $t_{124}= t_{234} = 2$; $t_{134} = 4$ $\Rightarrow 5 \leq t_{34} \leq 6$\end{tabular} \\ \hline \hline
\multicolumn{2}{|l|}{\textbf{Case 2:} $t_{123} = 3$}                                                                                                                                                                                                                                                                                                                                                                                                                                                                                                                                                                                                                                                                                                                                                                                                                                                                                                                                                                                                                                                                                                                                                                                                                                                                                    \\ \hline \hline
\multicolumn{1}{|l|}{2.1. $t=0$: $0 \leq t_{ij4} \leq 1 \Rightarrow t_{ij4} =0$}                                                                                                                                                                                                                                                                                                                                                                                                                                                                                                                                                                                         & 2.3. $t=2$: $2 \leq t_{ij4} \leq 3 \Rightarrow t_{ij4} =2$                                                                                                                                                                                                                                                                                                                                                                                                                                                                                                                                                                                          \\ \hline
\multicolumn{1}{|l|}{\begin{tabular}[c]{@{}l@{}}$0\leq t_{14} \leq 3 \Rightarrow t_{14} = 2$; $0\leq t_{24} \leq 3 \Rightarrow t_{24} = 0$;\\ $0\leq t_{34} \leq 3 \Rightarrow t_{34} = 0$\end{tabular}}                                                                                                                                                                                                                                                                                                                                                                                                                                                                 & \begin{tabular}[c]{@{}l@{}}$2\leq t_{14} \leq 5 \Rightarrow t_{14} = 2$; $2\leq t_{24} \leq 5 \Rightarrow t_{24} = 4$;\\ $2\leq t_{34} \leq 5 \Rightarrow t_{34} = 4$\end{tabular}                                                                                                                                                                                                                                                                                                                                                                                                                                                                  \\ \hline
\multicolumn{1}{|l|}{2.2. $t=1$: $1 \leq t_{ij4} \leq 2 \Rightarrow t_{ij4} =2$}                                                                                                                                                                                                                                                                                                                                                                                                                                                                                                                                                                                         & 2.4. $t=3$: $3 \leq t_{ij4} \leq 4 \Rightarrow t_{ij4} =4$                                                                                                                                                                                                                                                                                                                                                                                                                                                                                                                                                                                          \\ \hline
\multicolumn{1}{|l|}{$3\leq t_{24} \leq 6$}                                                                                                                                                                                                                                                                                                                                                                                                                                                                                                                                                                                                                              & \begin{tabular}[c]{@{}l@{}}$5\leq t_{14} \leq 8 \Rightarrow t_{14} = 6$; $5\leq t_{24} \leq 8 \Rightarrow t_{24} = 8$;\\ $5\leq t_{34} \leq 8 \Rightarrow t_{34} = 8$\end{tabular}                                                                                                                                                                                                                                                                                                                                                                                                                                                                  \\ \hline
\end{tabular}
}
\end{table}


\vspace{-0.2cm}
{\itshape Case} $i=12$: $\lambda_{12} = (0000001100)$; $t_1\equiv  t_2\equiv t_3\equiv t_4 \equiv 0\;(\mbox{mod}\;8)$; $t_{12}\equiv  t_{13}\equiv t_{24}\equiv 0 \;(\mbox{mod}\;4)$; $t_{34}\equiv 0 \;(\mbox{mod}\;4)$; $t_{14}\equiv t_{23}\equiv 2 \;(\mbox{mod}\;4)$.

%
Similarly to what was done in the previous case, we obtain $t_{1} = t_{2} = t_{3} = 8$ and $t_{12} = t_{13} = 4$. Let $v_{1} = (1-8)$ and $v_{2} = (1,2,3,4,9,10,11,12)$. Note that $t_{23} = 2$ or $6$. 
Furthermore, since $t_{12} + t_{23} - t_{123} < 8$, then $t_{23} - t_{123} < 4$. Therefore there is two cases to analyze (see Table \ref{tab.V12}).

Note that it is only possible to find representations in the following cases: (1.1.2.), (1.1.3.), (1.2.2.), (1.2.3.), (2.1.1.), (2.2.1.), (2.4.2.) and (2.3.2.). In the last item we find the minimal representation we want. Indeed:   
In case 1, consider $v_3 = (1,5,6,7,9,13,\break 14,15)$. For (1.1.2.): $t_{14} + t_{24} + t_{34} - t_{124} - t_{134} - t_{234} + t = 4$. If, for instance, we consider $v_4 = (i_1, \cdots, i_4, 16,17,18,19)$, with $\{i_1,\cdots,i_4\} \subset \{1,\cdots,15\}$, we will have a non-minimal representation with degree equal to $19$. For (1.2.2.): $t_{14} + t_{24} + t_{34} - t_{124} - t_{134} - t_{234} + t = 11$. We also obtain a non-minimal representation. For items (1.1.3.) and (1.2.3.): the analysis is analogous to the previous item.

In case 2, let $v_3 = (1,2,3,5,9,10,11,13)$. For (2.1.1.), (2.2.1.) and (2.4.2.):  we obtain a non-minimal representation of degree equal to $19$, $20$ and $18$, respectively. For 
(2.3.2.): $t_{14} + t_{24} + t_{34} - t_{124} - t_{134} - t_{234} + t = 4$ and $t_4 = 8$. If we consider $v_4 = (1,2,9,10,14,15,16,17)$ we get $V= V_{12}$.

\vspace{-0.2cm}

\begin{table}[H]
\centering
\caption{$V_{12}$. \label{tab.V12}}
{\scalefont{0.8}
\begin{tabular}{|ll|}
\hline
\multicolumn{2}{|l|}{\textbf{Case 1:} $t_{123} = 1$ and $t_{23} = 2$}                                                                                                                                                                                                                                                                                                                                                                                                                                                          \\ \hline \hline
\multicolumn{1}{|l|}{1.1. $t=0$}                                                                                                                                                                                                                                     & 1.2. $t=1$                                                                                                                                                                                                                                     \\ \hline
\multicolumn{1}{|l|}{\begin{tabular}[c]{@{}l@{}}$0 \leq t_{124} \leq 3 \Rightarrow t_{124} =0$ or $2$\\ $0 \leq t_{134} \leq 3 \Rightarrow t_{134} =0$ or $2$\\ $0 \leq t_{234} \leq 1 \Rightarrow t_{234} =0$\end{tabular}}                                         & \begin{tabular}[c]{@{}l@{}}$1 \leq t_{124} \leq 4 \Rightarrow t_{124} =2$ or $4$\\ $1 \leq t_{134} \leq 4 \Rightarrow t_{134} =2$ or $4$\\ $1 \leq t_{234} \leq 2 \Rightarrow t_{234} =2$\end{tabular}                                         \\ \hline
\multicolumn{1}{|l|}{1.1.1. $t_{124}= t_{134} = t_{234} = 0$ $\Rightarrow 0 \leq t_{14} \leq 1$}                                                                                                                                                                     & 1.2.1. $t_{124}= t_{134} = t_{234} = 2$ $\Rightarrow 3 \leq t_{14} \leq 4$                                                                                                                                                                     \\ \hline
\multicolumn{1}{|l|}{\begin{tabular}[c]{@{}l@{}}1.1.2. $t_{124}= t_{234} = 0$; $t_{134} =  2$\\ $\Rightarrow 2 \leq t_{14} \leq 3 \Rightarrow t_{14} =2$; $0\leq t_{24} \leq 3 \Rightarrow t_{24} = 0;$\\ $2\leq t_{34} \leq 5 \Rightarrow t_{34} = 4$\end{tabular}} & \begin{tabular}[c]{@{}l@{}}1.2.2. $t_{124}= t_{234} = 2$; $t_{134} =  4$\\ $\Rightarrow 5 \leq t_{14} \leq 6 \Rightarrow t_{14} =6;$ $3\leq t_{24} \leq 6 \Rightarrow t_{24} = 4$;\\ $5\leq t_{34} \leq 8 \Rightarrow t_{34} = 8$\end{tabular} \\ \hline
\multicolumn{1}{|l|}{\begin{tabular}[c]{@{}l@{}}1.1.3. $t_{124}= 2 $; $t_{134} = t_{234} = 0$\\ $\Rightarrow 2 \leq t_{14} \leq 3 \Rightarrow t_{14} =2;$ $2\leq t_{24} \leq 5 \Rightarrow t_{24} = 4$;\\ $0\leq t_{34} \leq 3 \Rightarrow t_{34} = 0$\end{tabular}} & \begin{tabular}[c]{@{}l@{}}1.2.3.  $t_{124}= 4$; $t_{134} = t_{234} = 2$\\ $\Rightarrow 5 \leq t_{14} \leq 6 \Rightarrow t_{14} =6;$ $5\leq t_{24} \leq 8 \Rightarrow t_{24} = 8$;\\ $3\leq t_{34} \leq 6 \Rightarrow t_{34} = 4$\end{tabular} \\ \hline
\multicolumn{1}{|l|}{1.1.4. $t_{124}= t_{134} = 2$; $t_{234} = 0$ $\Rightarrow 4 \leq t_{14} \leq 5$}                                                                                                                                                                & 1.2.4. $t_{124}= t_{134} = 4$; $t_{234} = 2$ $\Rightarrow 7 \leq t_{14} \leq 8$                                                                                                                                                                \\ \hline \hline
\multicolumn{2}{|l|}{\textbf{Case 2:} $t_{123} = 3$ and $t_{23} = 6$}                                                                                                                                                                                                                                                                                                                                                                                                                                                          \\ \hline \hline
\multicolumn{1}{|l|}{2.1. $t=0$}                                                                                                                                                                                                                                     & 2.3. $t=2$                                                                                                                                                                                                                                     \\ \hline
\multicolumn{1}{|l|}{\begin{tabular}[c]{@{}l@{}}$0 \leq t_{124} \leq 1 \Rightarrow t_{124} =0$\\ $0 \leq t_{134} \leq 1 \Rightarrow t_{134} =0$\\ $0 \leq t_{234} \leq 3 \Rightarrow t_{234} =0$ or $2$\end{tabular}}                                                & \begin{tabular}[c]{@{}l@{}}$2 \leq t_{124} \leq 3 \Rightarrow t_{124} =2$\\ $2 \leq t_{134} \leq 3 \Rightarrow t_{134} =2$\\ $2 \leq t_{234} \leq 5 \Rightarrow t_{234} =2$ or $4$\end{tabular}                                                \\ \hline
\multicolumn{1}{|l|}{\begin{tabular}[c]{@{}l@{}}2..1.1. $t_{124}= t_{134} = t_{234} = 0$\\ $\Rightarrow 0 \leq t_{14} \leq 3 \Rightarrow t_{14} =2;$ $0\leq t_{24} \leq 1 \Rightarrow t_{24} = 0;$\\ $0\leq t_{34} \leq 1 \Rightarrow t_{34} = 0$\end{tabular}}      & 2.3.1. $t_{124}= t_{134} = t_{234} = 2$ $\Rightarrow 2\leq t_{24} \leq 3$                                                                                                                                                                      \\ \hline
\multicolumn{1}{|l|}{2.1.2. $t_{124}= t_{134} = 0$; $t_{234} = 2$ $\Rightarrow 2\leq t_{24} \leq 3$}                                                                                                                                                                 & \begin{tabular}[c]{@{}l@{}}2.3.2. $t_{124}= t_{134} = 2$; $t_{234} = 4$ \\ $\Rightarrow 2 \leq t_{14} \leq 5 \Rightarrow t_{14} =2$; $4\leq t_{24} \leq 5 \Rightarrow t_{24} = 4$;\\ $4\leq t_{34} \leq 5 \Rightarrow t_{34} = 4$\end{tabular} \\ \hline
\multicolumn{1}{|l|}{2.2. $t=1$}                                                                                                                                                                                                                                     & 2.4. $t=3$                                                                                                                                                                                                                                     \\ \hline
\multicolumn{1}{|l|}{\begin{tabular}[c]{@{}l@{}}$1 \leq t_{124} \leq 2 \Rightarrow t_{124} =2$\\ $1 \leq t_{134} \leq 2 \Rightarrow t_{134} =2$\\ $1 \leq t_{234} \leq 4 \Rightarrow t_{234} =2$ or $4$\end{tabular}}                                                & \begin{tabular}[c]{@{}l@{}}$3 \leq t_{124} \leq 4 \Rightarrow t_{124} =4$\\ $3 \leq t_{134} \leq 4 \Rightarrow t_{134} =4$\\ $3 \leq t_{234} \leq 6 \Rightarrow t_{234} =4$ or $6$\end{tabular}                                                \\ \hline
\multicolumn{1}{|l|}{\begin{tabular}[c]{@{}l@{}}2.2.1. $t_{124}= t_{134} = t_{234} = 2$ \\ $\Rightarrow 3 \leq t_{14} \leq 6 \Rightarrow t_{14} =6;$ $3\leq t_{24} \leq 4 \Rightarrow t_{24} = 4;$\\ $3\leq t_{34} \leq 4 \Rightarrow t_{34} = 4$\end{tabular}}      & 2.4.1. $t_{124}= t_{134} = t_{234} = 4$ $\Rightarrow 5\leq t_{24} \leq 6$                                                                                                                                                                      \\ \hline
\multicolumn{1}{|l|}{2.2.2. $t_{124}= t_{134} = 2$; $t_{234} = 4$ $\Rightarrow 5\leq t_{24} \leq 6$}                                                                                                                                                                 & \begin{tabular}[c]{@{}l@{}}2.4.2. $t_{124}= t_{134} = 4$; $t_{234} = 6$\\ $\Rightarrow 5 \leq t_{14} \leq 8 \Rightarrow t_{14} =6$; $7\leq t_{24} \leq 8 \Rightarrow t_{24} = 8;$\\ $7\leq t_{34} \leq 8 \Rightarrow t_{34} = 8$\end{tabular}  \\ \hline
\end{tabular}
}
\end{table}


{\itshape Case} $i=13$: $\lambda_{13} = (0110111100)$; $t_1\equiv  t_4 \equiv 0\;(\mbox{mod}\;8)$; $t_2\equiv  t_3 \equiv 4\;(\mbox{mod}\;8)$; $t_{24}\equiv  t_{34}\equiv 0 \;(\mbox{mod}\;4)$; $t_{12}\equiv  t_{13}\equiv t_{14} \equiv t_{23}\equiv 2 \;(\mbox{mod}\;4)$.


As $\mbox{deg}(V) = 17$, then $t_{1} = 8$ and so, $t_{12} = 2$ or $6$ and $t_{13} = 2$ or $6$. If $t_{1} = 16$, we will have $t_{2} - t_{12} < 1$, because $l_{12} < 17$, but this can't happen. Since $t_{1} = 8$ and $l_{12} < 17$, we must have $t_{2} - t_{12} < 9$. Therefore, there are  two cases to analyze: (1) $t_2 = 12$ and $t_{12} = 6$; (2)	$t_2 = 4$ and $t_{12} = 2$ (see Table \ref{tab.V13}). 
In case 1, there are two subcases to analyze, because we want  $l_{23} = t_2 + t_3 - t_{23}$ to be less than $17$, that is, $t_3 - t_{23} < 5$. By item (1.1.), we find $t_{13} = 6$ and $t_{123} = 5$, because $l_{13}=t_1 + t_3 -t_{13} < 17$ and $t_{12} + t_{13} - t_{123} <8$. So let $v_1 = (1-8)$, $v_2 = (1-6,9-14)$ and $v_3 = (1-5,7,9-13,15)$. It is only possible to find (non-minimal) representations in items (1.1.3.c) and (1.1.4.c). In any other item, we find some contradiction in the hypotheses related to the weights $t_{i4}$. In item (1.2.) we have $t_{13} = 2$, so $t_{123} = 1$, because $l_{13}=t_1 + t_3 -t_{13} < 17$ and $t_{12}$. Let $v_1 = (1-8)$, $v_2 = (1-6,9-14)$ and $v_3 = (1,7,9,15)$. There are non-minimal representations for items (1.2.1.c) and (1.2.2.c).

In case 2, we have $t_{23} = 2$ and $t_{123} = 1$. 	We want $l_{23} = t_2 + t_3 - t_{23}$ and $l_{13} = t_1 + t_3 - t_{13}$  to be smaller than $17$, then $t_3 = 4$ or $12$ and $t_3 - t_{13} < 9$. There are two possible solutions: (2.1.) $t_3 = 4$ and $t_{13} = 2$ and (2.2.) $t_3 = 12$ and $t_{13} = 6$. For item (2.1.), suppose $v_1 = (1-8)$, $v_2 = (1,2,9,10)$ and $v_3 = (1,3,9,11)$. By items (2.1.1.), (2.2.1.c) and (2.2.2.c), there are non-minimal representations. In item (2.1.2.), since we have $t_{14} + t_{24} + t_{34} - t_{124} - t_{134} - t_{234} + t = 2$, so $t_4 \geq 8$. If $t_4 = 8$ and $v_4 = (4,5,12-17)$, with $v_1 \cap v_4 = \{4,5\}$, we obtain $V=V_{13}$.

\begingroup        
\footnotesize
\setlength{\tabcolsep}{5pt}
\renewcommand{\arraystretch}{1.05}
\sloppy       
\setlength{\LTleft}{\fill}
\setlength{\LTright}{\fill}
\begin{longtable}{|>{\raggedright\arraybackslash}p{.40\textwidth}|
>{\raggedright\arraybackslash}p{.40\textwidth}|}
\caption{$V_{13}$.}\label{tab.V13}\\
\hline
\multicolumn{2}{|l|}{\textbf{Case 1:} $t_{2} = 12$ and $t_{12} = 6$} \\
\hline\hline
\endfirsthead

\hline
\multicolumn{2}{|l|}{\textit{Table \thetable\ (continued)}}\\
\hline
\multicolumn{2}{|l|}{\textbf{Case 1:} $t_{2} = 12$ and $t_{12} = 6$} \\
\hline\hline
\endhead

\hline
\multicolumn{2}{|r|}{\textit{Continues on the next page}}\\
\hline
\endfoot

\hline
\endlastfoot

\multicolumn{2}{|l|}{1.1. $t_{3} = 12$ and $t_{23} = 10$} \\ \hline

\multicolumn{2}{|l|}{%
\begin{tabular}[t]{@{}l@{}}
1.1.1.\ $t=0$:\\ $0 \leq t_{124} \leq 1 \Rightarrow t_{124}=0$; $0 \leq t_{134} \leq 1 \Rightarrow t_{134}=0$;  $0 \leq t_{234} \leq 5 \Rightarrow t_{234}=0, 2$ or $4$; $0 \leq t_{14} \leq 1$
\end{tabular}}\\

\multicolumn{2}{|l|}{%
\begin{tabular}[t]{@{}l@{}}
1.1.2.\ $t=1$:\\ $1 \leq t_{124} \leq 2 \Rightarrow t_{124}=2$; $1 \leq t_{134} \leq 2 \Rightarrow t_{134}=2$ $1 \leq t_{234} \leq 6 \Rightarrow t_{234}=2, 4$ or $6$; $3 \leq t_{14} \leq 4$
\end{tabular}}\\

\multicolumn{2}{|l|}{%
\begin{tabular}[t]{@{}l@{}}
1.1.3.\ $t=2$:\\ $2 \leq t_{124} \leq 3 \Rightarrow t_{124}=2$; $2 \leq t_{134} \leq 3 \Rightarrow t_{134}=2$; $2 \leq t_{234} \leq 7 \Rightarrow t_{234}=2, 4$ or $6$
\end{tabular}}\\

\multicolumn{2}{|l|}{%
\begin{tabular}[t]{@{}l@{}}
(a) $t_{124}= t_{134} = t_{234} = 2$ $\Rightarrow 2\leq t_{24} \leq 3$\\
(b) $t_{124}= t_{134} = 2$; $t_{234} = 6$ $\Rightarrow 6\leq t_{24} \leq 7$\\
(c)\ $t_{124}= t_{134} = 2$; $t_{234} = 4$: $2 \leq t_{14} \leq 3 \Rightarrow t_{14}=2$;\\
\quad $4 \leq t_{24} \leq 5 \Rightarrow t_{24}=4$; \, $4 \leq t_{34} \leq 5 \Rightarrow t_{34}=4$
\end{tabular}}\\

\multicolumn{2}{|l|}{%
\begin{tabular}[t]{@{}l@{}}
1.1.4.\ $t=3$: \\ $3\leq t_{124} \leq 4 \Rightarrow t_{124}=4$; $3 \leq t_{134} \leq 4 \Rightarrow t_{134}=4$; $3 \leq t_{234} \leq 8 \Rightarrow t_{234}=4, 6$ or $8$
\end{tabular}}\\

\multicolumn{2}{|l|}{%
\begin{tabular}[t]{@{}l@{}}
(a)\ $t_{124}= t_{134} = t_{234} = 4$ $\Rightarrow 5\leq t_{24} \leq 6$\\
(b)\ $t_{124}= t_{134} = 4$; $t_{234} = 8$ $\Rightarrow 9\leq t_{24} \leq 10$\\
(c)\ $t_{124}= t_{134} = 4$; $t_{234} = 6$: $5 \leq t_{14} \leq 6 \Rightarrow t_{14}=6$;\\
\quad $7 \leq t_{24} \leq 8 \Rightarrow t_{24}=8$;\, $7 \leq t_{34} \leq 8 \Rightarrow t_{34}=8$
\end{tabular}}\\

\multicolumn{2}{|l|}{%
\begin{tabular}[t]{@{}l@{}}
1.1.5.\ $t=4$:\\ $4 \leq t_{124} \leq 5 \Rightarrow t_{124}=2$ or $4$; $4 \leq t_{134} \leq 5 \Rightarrow t_{134}=2$; $4 \leq t_{234} \leq 9 \Rightarrow t_{234}=2$; $4 \leq t_{14} \leq 5$
\end{tabular}}\\

\multicolumn{2}{|l|}{%
\begin{tabular}[t]{@{}l@{}}
1.1.6.\ $t=5$:\\ $5 \leq t_{124} \leq 6 \Rightarrow t_{124}=2$ or $4$; $5 \leq t_{134} \leq 6 \Rightarrow t_{134}=2$; $5 \leq t_{234} \leq 10 \Rightarrow t_{234}=2$; $7 \leq t_{14} \leq 8$
\end{tabular}}\\ \hline
\multicolumn{2}{|l|}{1.2. $t_{3} = 4$ and $t_{23} = 2$} \\ \hline

\multicolumn{2}{|l|}{%
\begin{tabular}[t]{@{}l@{}}
1.2.1.\ $t=0$:\\ $0 \leq t_{124} \leq 5 \Rightarrow t_{124}=0, 2$ or $4$; $0 \leq t_{134} \leq 1 \Rightarrow t_{134}=0$;\, $0 \leq t_{234} \leq 1 \Rightarrow t_{234}=0$\\[2pt]
(a)\ $t_{124}= t_{134} = t_{234} = 0$ $\Rightarrow 0 \leq t_{14} \leq 1$\\
(b)\ $t_{124}= 4$; $t_{134} = t_{234} = 0$ $\Rightarrow 4 \leq t_{14} \leq 5$\\
(c)\ $t_{124}= 2$; $t_{134} = t_{234} = 0$: $2 \leq t_{14} \leq 3 \Rightarrow t_{14}=2$;\\
\quad $2 \leq t_{24} \leq 7 \Rightarrow t_{24}=4$;\, $0 \leq t_{34} \leq 1 \Rightarrow t_{34}=0$
\end{tabular}%
}\\

\multicolumn{2}{|l|}{%
\begin{tabular}[t]{@{}l@{}}
1.2.2.\ $t=1$:\\ $1 \leq t_{124} \leq 6 \Rightarrow t_{124}=2, 4$ or $6$; $1 \leq t_{134} \leq 2 \Rightarrow t_{134}=2$;\, $1 \leq t_{234} \leq 2 \Rightarrow t_{234}=2$
\end{tabular}%
}\\

\multicolumn{2}{|l|}{%
\begin{tabular}[t]{@{}l@{}}
(a)\ $t_{124}= t_{134} = t_{234} = 2$ $\Rightarrow 3 \leq t_{14} \leq 4$\\
(b)\ $t_{124}= 6$; $t_{134} = t_{234} = 2$ $\Rightarrow 7 \leq t_{14} \leq 8$\\
(c)\ $t_{124}= 4$; $t_{134} = t_{234} = 2$: $5 \leq t_{14} \leq 6 \Rightarrow t_{14}=6$;\\
\quad $5 \leq t_{24} \leq 10 \Rightarrow t_{24}=8$;\, $3 \leq t_{34} \leq 4 \Rightarrow t_{34}=4$
\end{tabular}%
}\\
\hline\hline

\multicolumn{2}{|l|}{\textbf{Case 2:} $t_{2} = 4$ and $t_{13} = 2$} \\ \hline\hline
\multicolumn{2}{|l|}{2.1. $t_{3}=4$ and $t_{13}=2$} \\ \hline
\multicolumn{2}{|l|}{\begin{tabular}[t]{@{}l@{}}2.1.1. $t=0$:\\ $0 \leq t_{ij4} \leq 1 \Rightarrow t_{ij4} =0$
$\Rightarrow 0 \leq t_{14} \leq 5 \Rightarrow t_{14} =2$; $0\leq t_{24} \leq 1 \Rightarrow t_{24} = 0$; $0\leq t_{34} \leq 1 \Rightarrow t_{34} = 0$
\end{tabular}} \\
\multicolumn{2}{|l|}{\begin{tabular}[t]{@{}l@{}}2.1.2. $t=1$: \\ $1 \leq t_{ij4} \leq 2 \Rightarrow t_{ij4} =2$
$\Rightarrow 3 \leq t_{14} \leq 8 \Rightarrow t_{14} =6$; $3\leq t_{24} \leq 4 \Rightarrow t_{24} = 4$; $3\leq t_{34} \leq 4 \Rightarrow t_{34} = 4$
\end{tabular}} \\ \hline

\multicolumn{2}{|l|}{2.2. $t_{3}=12$ and $t_{13}=6$} \\ \hline
\multicolumn{2}{|l|}{%
\begin{tabular}[t]{@{}l@{}}
2.2.1.\ $t=0$:\\ $0 \leq t_{124} \leq 1 \Rightarrow t_{124}=0$; $0 \leq t_{134} \leq 5 \Rightarrow t_{134}=0, 2$ or $4$;\, $0 \leq t_{234} \leq 1 \Rightarrow t_{234}=0$
\end{tabular}%
}\\

\multicolumn{2}{|l|}{%
\begin{tabular}[t]{@{}l@{}}
(a)\ $t_{124}= t_{134} = t_{234} = 0$ $\Rightarrow 0 \leq t_{14} \leq 1$\\
(b)\ $t_{124}=t_{234} = 0$; $t_{134} = 4$ $\Rightarrow 4 \leq t_{14} \leq 5$\\
(c)\ $t_{124}=t_{234} = 0$; $t_{134} = 2$: $2 \leq t_{14} \leq 3 \Rightarrow t_{14}=2$;\\
\quad $0 \leq t_{24} \leq 1 \Rightarrow t_{24}=0$;\, $2 \leq t_{34} \leq 7 \Rightarrow t_{34}=4$
\end{tabular}%
}\\

\multicolumn{2}{|l|}{%
\begin{tabular}[t]{@{}l@{}}
2.2.2.\ $t=1$:\\ $1 \leq t_{124} \leq 2 \Rightarrow t_{124}=2$; $1 \leq t_{134} \leq 6 \Rightarrow t_{134}=2, 4$ or $6$;\, $1 \leq t_{234} \leq 2 \Rightarrow t_{234}=2$
\end{tabular}%
}\\

\multicolumn{2}{|l|}{%
\begin{tabular}[t]{@{}l@{}}
(a)\ $t_{124}= t_{134} = t_{234} = 2$ $\Rightarrow 3 \leq t_{14} \leq 4$\\
(b)\ $t_{124}=t_{234} = 2$; $t_{134} = 6$ $\Rightarrow 8 \leq t_{14} \leq 9$\\
(c)\ $t_{124}=t_{234} = 2$; $t_{134} = 4$: $5 \leq t_{14} \leq 6 \Rightarrow t_{14}=6$;\\
\quad $3 \leq t_{24} \leq 4 \Rightarrow t_{24}=4$;\, $5 \leq t_{34} \leq 10 \Rightarrow t_{34}=8$
\end{tabular}%
}\\
\hline
\end{longtable}
\endgroup

{\itshape Case} $i=14$: $\lambda_{14} = (0001001100)$; $t_1\equiv  t_2 \equiv t_3 \equiv 0\;(\mbox{mod}\;8)$; $t_4 \equiv 4\;(\mbox{mod}\;8)$; $t_{14}\equiv  t_{23}\equiv 2 \;(\mbox{mod}\;4)$; $t_{12}\equiv  t_{13}\equiv t_{24} \equiv t_{34}\equiv 0 \;(\mbox{mod}\;4)$.


Since $\mbox{deg}(V) \leq 13$, then $t_{1} = t_{2} = t_{3} = 8$, $t_{12} = t_{13} =4$ and $t_{23} = 6$ ($l_{23} < 13$).	 Using the facts $t_{12} + t_{23} - t_{123} < 8$ and $t_{123} < t_{12} = t_{13}$, we get $t_{123} = 3$. So let $v_{1} = (1-8)$, $v_{2} = (1,2,3,4,9,10,11,12)$ and $v_{3} = (1,2,3,5,9,10,11,13)$.

There are four possibilities for the weight $t$ (see Table \ref{tab.V14}). By items (1.1.), (2.1.) and (4.1.), there are non-minimal representations of degree equal to $15$,  $16$ and $14$, respectively. By item (3.1.), as $t_{14} + t_{24} + t_{34} - t_{124} - t_{134} - t_{234} + t = 4$, we can consider $t_4 = 4$ and $v_4 = (1,2,9,10)$. Thus, $V=V_{14}$. In other items there are no representations.


\begin{table}[h!]
\centering
\caption{$V_{14}$. \label{tab.V14}}
{\scalefont{0.8}
\begin{tabular}{|l|l|}
\hline
1. $t=0$                                                                                                                                                                                                                               & 3. $t=2$                                                                                                                                                                                                                                    \\ \hline \hline
\begin{tabular}[c]{@{}l@{}}$0 \leq t_{124} \leq 1 \Rightarrow t_{124} =0$\\ $0 \leq t_{134} \leq 1 \Rightarrow t_{134} =0$\\ $0 \leq t_{234} \leq 3 \Rightarrow t_{234} =0$ or $2$\end{tabular}                                        & \begin{tabular}[c]{@{}l@{}}$2 \leq t_{124} \leq 3 \Rightarrow t_{124} =2$\\ $2 \leq t_{134} \leq 3 \Rightarrow t_{134} =2$\\ $2 \leq t_{234} \leq 5 \Rightarrow t_{234} =2$ or $4$\end{tabular}                                             \\ \hline
\begin{tabular}[c]{@{}l@{}}1.1. $t_{124}= t_{134} = t_{234} = 0$\\ $\Rightarrow 0 \leq t_{14} \leq 3 \Rightarrow t_{14} =2$; $0\leq t_{24} \leq 1 \Rightarrow t_{24} = 0$;\\ $0\leq t_{34} \leq 1 \Rightarrow t_{34} = 0$\end{tabular} & \begin{tabular}[c]{@{}l@{}}3.1. $t_{124}= t_{134} = 2$; $t_{234} = 4$\\ $\Rightarrow 2 \leq t_{14} \leq 5 \Rightarrow t_{14} =2$; $4\leq t_{24} \leq 5 \Rightarrow t_{24} = 4$;\\ $4\leq t_{34} \leq 5 \Rightarrow t_{34} = 4$\end{tabular} \\ \hline
1.2. $t_{124}= 2$; $t_{134} = t_{234} = 0$ $\Rightarrow 2\leq t_{24} \leq 3$                                                                                                                                                           & 3.2. $t_{124}= t_{134} = t_{234} = 2$ $\Rightarrow 2\leq t_{24} \leq 3$                                                                                                                                                                     \\ \hline \hline
2. $t=1$                                                                                                                                                                                                                               & 4. $t=3$                                                                                                                                                                                                                                    \\ \hline \hline
\begin{tabular}[c]{@{}l@{}}$1 \leq t_{124} \leq 2 \Rightarrow t_{124} =2$\\ $1 \leq t_{134} \leq 2 \Rightarrow t_{134} =2$\\ $1 \leq t_{234} \leq 4 \Rightarrow t_{234} =2$ or $4$\end{tabular}                                        & \begin{tabular}[c]{@{}l@{}}$3 \leq t_{124} \leq 4 \Rightarrow t_{124} =2$;\\ $3 \leq t_{134} \leq 4 \Rightarrow t_{134} =2$;\\ $3 \leq t_{234} \leq 6 \Rightarrow t_{234} =4$ or $6$\end{tabular}                                           \\ \hline
\begin{tabular}[c]{@{}l@{}}2.1. $t_{124}= t_{134} = t_{234} = 2$\\ $\Rightarrow 3 \leq t_{14} \leq 6 \Rightarrow t_{14} =6$; $3\leq t_{24} \leq 4 \Rightarrow t_{24} = 4$;\\ $3\leq t_{34} \leq 4 \Rightarrow t_{34} = 4$\end{tabular}  & \begin{tabular}[c]{@{}l@{}}4.1. $t_{124}= t_{134} = 4$; $t_{234} = 6$\\ $\Rightarrow 5 \leq t_{14} \leq 8 \Rightarrow t_{14} =6$; $7\leq t_{24} \leq 8 \Rightarrow t_{24} = 8$;\\ $7\leq t_{34} \leq 8 \Rightarrow t_{34} = 8$\end{tabular} \\ \hline
2.2. $t_{124}= t_{134} = 2$; $ t_{234} = 4$ $\Rightarrow 5\leq t_{24} \leq 6$                                                                                                                                                          & 4.2. $t_{124}= t_{134} = t_{234} = 4$ $\Rightarrow 5\leq t_{24} \leq 6$                                                                                                                                                                     \\ \hline
\end{tabular}
}
\end{table}


{\itshape Case} $i=15$: $\lambda_{15} = (1001001100)$:] $t_1\equiv  t_4 \equiv 4\;(\mbox{mod}\;8)$; $t_2\equiv  t_3 \equiv 0\;(\mbox{mod}\;8)$; $t_{14}\equiv  t_{23}\equiv 2 \;(\mbox{mod}\;4)$; $t_{12}\equiv  t_{13}\equiv t_{24} \equiv t_{34}\equiv 0 \;(\mbox{mod}\;4)$.

%

It is clear that $t_{1} = 12$. Besides, $t_{2} = 8$ and $t_{12} =4$, because $l_{12} < 17$, that is, $t_{2} - t_{12} < 5$. Assume $v_{1} = (1-12)$ and $v_{2} = (1,2,3,4,13,14,15,16)$. Analogously, we get $t_{3} = 8$ and $t_{13} = 4$.	We also have $t_{23} = 6$, and so $t_{123} =3$. If $t_{23} = 2$, then $t_{123} = 1$, which implies $l_{123} = 19 > \mbox{deg}(V)$.  Assume $v_{3} = (1,2,3,5,13,14,15,17)$. 


As usual, our last step is to find $v_{4}$. According to Table \ref{tab.V15}, there are non-minimal representations by items (1.1.), (2.1.) and (4.1.). By item (3.1.), if $t_{14} = 6$, we will have a non-minimal representation. But if $t_{14} = 2$, then	$t_{14} + t_{24} + t_{34} - t_{124} - t_{134} - t_{234} + t = 4$. If $t_4 = 4$ and $v_4 = (1,2,13,14)$, we obtain $V=V_{15}$.	

\vspace{-0.5cm}
\begin{table}[H]
\centering
\caption{$V_{15}$. \label{tab.V15}}
{\scalefont{0.8}
\begin{tabular}{|l|l|}
\hline
1. $t=0$                                                                                                                                                                                                                                       & 3. $t=2$                                                                                                                                                                                                                                             \\ \hline \hline
\begin{tabular}[c]{@{}l@{}}$0 \leq t_{124} \leq 1 \Rightarrow t_{124} =0$\\ $0 \leq t_{134} \leq 1 \Rightarrow t_{134} =0$\\ $0 \leq t_{234} \leq 3 \Rightarrow t_{234} =0$ or $2$\end{tabular}                                                & \begin{tabular}[c]{@{}l@{}}$2 \leq t_{124} \leq 3 \Rightarrow t_{124} =2$\\ $2 \leq t_{134} \leq 3 \Rightarrow t_{134} =2$\\ $2 \leq t_{234} \leq 5 \Rightarrow t_{234} =2$ or $4$\end{tabular}                                                      \\ \hline
\begin{tabular}[c]{@{}l@{}}1.1. $t_{124}= t_{134} = t_{234} = 0$\\ $\Rightarrow 0 \leq t_{24} \leq 1 \Rightarrow t_{24} =0$; $0\leq t_{34} \leq 1 \Rightarrow t_{34} = 0$;\\ $0\leq t_{14} \leq 7 \Rightarrow t_{14} = 2$ or $6$;\end{tabular} & \begin{tabular}[c]{@{}l@{}}3.1. $t_{124}= t_{134} = 2$; $t_{234} = 4$\\ $\Rightarrow 4 \leq t_{24} \leq 5 \Rightarrow t_{24} =4$; $4\leq t_{34} \leq 5 \Rightarrow t_{34} = 4$;\\ $2\leq t_{14} \leq 9 \Rightarrow t_{14} = 2$ or $6$\end{tabular}   \\ \hline
1.2. $t_{124}= t_{134} = 0$; $t_{234} = 2$ $\Rightarrow 2\leq t_{24} \leq 3$                                                                                                                                                                   & 3.2. $t_{124}= t_{134} = t_{234} = 2$ $\Rightarrow 2\leq t_{24} \leq 3$                                                                                                                                                                              \\ \hline \hline
2. $t=1$                                                                                                                                                                                                                                       & 4. $t=3$                                                                                                                                                                                                                                             \\ \hline \hline
\begin{tabular}[c]{@{}l@{}}$1 \leq t_{124} \leq 2 \Rightarrow t_{124} =2$\\ $1 \leq t_{134} \leq 2 \Rightarrow t_{134} =2$\\ $1 \leq t_{234} \leq 4 \Rightarrow t_{234} =2$ or $4$\end{tabular}                                                & \begin{tabular}[c]{@{}l@{}}$3 \leq t_{124} \leq 4 \Rightarrow t_{124} =4$;\\ $3 \leq t_{134} \leq 4 \Rightarrow t_{134} =4$;\\ $3 \leq t_{234} \leq 6 \Rightarrow t_{234} =4$ or $6$\end{tabular}                                                    \\ \hline
\begin{tabular}[c]{@{}l@{}}2.1. $t_{124}= t_{134} = t_{234} = 2$\\  $3\leq t_{24} \leq 4 \Rightarrow t_{24} = 4$; $3\leq t_{34} \leq 4 \Rightarrow t_{34} = 4$;\\ $3 \leq t_{14} \leq 10 \Rightarrow t_{14} =6$ or $10$\end{tabular}           & \begin{tabular}[c]{@{}l@{}}4.1. $t_{124}= t_{134} = 4$; $t_{234} = 6$\\ $\Rightarrow 7 \leq t_{24} \leq 8 \Rightarrow t_{24} =8$; $7\leq t_{34} \leq 8 \Rightarrow t_{34} = 8$;\\ $5\leq t_{14} \leq 12 \Rightarrow t_{14} = 6$ or $10$\end{tabular} \\ \hline
2.2. $t_{124}= t_{134} =  t_{234} = 4$ $\Rightarrow 5\leq t_{24} \leq 6$                                                                                                                                                                       & 4.2. $t_{124}= t_{134} = t_{234} = 4$ $\Rightarrow 3\leq t_{24} \leq 6$                                                                                                                                                                              \\ \hline
\end{tabular}
}
\end{table}

%


{\itshape Case} $i=16$:

\begin{description}
\item[$\lambda_{16} = (0001111100)$:] $t_1\equiv  t_2 \equiv t_3 \equiv 0\;(\mbox{mod}\;8)$; $t_4 \equiv 4\;(\mbox{mod}\;8)$; $t_{24}\equiv  t_{34}\equiv 0 \;(\mbox{mod}\;4)$; $t_{12}\equiv  t_{13}\equiv t_{23} \equiv t_{24}\equiv 2 \;(\mbox{mod}\;4)$.
\end{description}

We have $t_{12} \leq 6$, otherwise $t_1 \geq 16$, $t_2 \geq 16$ and $\mbox{deg}\,(V) \geq 22$.	Similarly, $t_{13} \leq 6$ and $t_{23} \leq 6$. Since $t_1 + t_2 - t_{12} \leq 17$ and $t_1 + t_3 - t_{13} \leq 17$, soon $t_1 = t_2 = t_3 = 8$. Let $v_1 = (1-8)$.



Let's do the analysis in two cases: $t_{12} = 2$ and $t_{12} =6$.

In case 1, consider $v_2 = (1,2,9-14)$. In this case, $t_{123} = 1$. If $t_{13} = t_{23} =2$, we will have $\mbox{deg}\,(V)\geq 19$, because $l_{123} = 19$. If $t_{13} = t_{23} =6$, we will have $t_{3} > 11$, because $t_{13} + t_{23} - t_{123} = 11$. Thus we have only two subcases to analyze: (1.1.) $t_{13} = 2$ and $t_{23} = 6$; and (1.2.) $t_{13} = 6$ and $t_{23} = 2$. See Table \ref{tab.V16}.

For item (1.1.), suppose $v_3 = (1,3,9,10,11,12,13,15)$. By item (1.1.1.a), where $t=0$; $t_{124}= t_{134} = t_{234} = 0$ and $t_{14} = 2$, $t_{24} = t_{34} =0$, we obtain $t_{14} + t_{24} + t_{34} - t_{124} - t_{134} - t_{234} + t = 2$. If $t_4 = 4$, we can consider $v_{4} = (4,5,16,17)$. Therefore, $V= V_{16}$. Note that in any other item, if there is a representation, it has a degree greater than $17$, that is, it is not minimal. We found non-minimal representations through the analysis of the following items: (1.1.1.c), (1.1.2.a) and (1.1.2.c).

In case 2, we have $t_{123} = 1$ or $5$, since we need to have $t_{1} < t_{12} + t_{13} -t_{123}$. With an analysis to the previous case, we observe that we do not have representations in this case. For item (2.1.), use that $t_{1} = t_{2} =8$ to get $t_{13} = t_{23} =2$ and for item (2.2.), note that $t_{13} = t_{23} =6$.


\begin{table}[hh!] \centering \caption{$V_{16}$. \label{tab.V16}}
{\scalefont{0.75}
\begin{tabular}{|ll|}
\hline
\multicolumn{2}{|l|}{\textbf{Case 1:} $t_{12} = 2$}                                                                                                                                                                                                                                                                                                                                                                                                                                                                                                                                                                                                                                                                                                                                                                                                                                                                                                                                                                                                                                             \\ \hline \hline
\multicolumn{2}{|l|}{1.1. $t_{13} = 2$ and $t_{23} = 6$}                                                                                                                                                                                                                                                                                                                                                                                                                                                                                                                                                                                                                                                                                                                                                                                                                                                                                                                                                                                                                               \\ \hline \hline
\multicolumn{1}{|l|}{1.1.1. $t=0$}                                                                                                                                                                                                                                                                                                                                                                                                                                                                                                                   & 1.1.2. $t=1$                                                                                                                                                                                                                                                                                                                                                                                                                                                                                                                    \\ \hline
\multicolumn{1}{|l|}{\begin{tabular}[c]{@{}l@{}}$0 \leq t_{124} \leq 1 \Rightarrow t_{124} =0$ \\ $0 \leq t_{134} \leq 1 \Rightarrow t_{134} =0$ \\ $0 \leq t_{234} \leq 5 \Rightarrow t_{234} =0, 2$ or $4$\end{tabular}}                                                                                                                                                                                                                                                                                                                           & \begin{tabular}[c]{@{}l@{}}$1 \leq t_{124} \leq 2 \Rightarrow t_{124} =2$ \\ $1 \leq t_{134} \leq 2 \Rightarrow t_{134} =2$ \\ $1 \leq t_{234} \leq 6 \Rightarrow t_{234} =2, 4$ or $6$\end{tabular}                                                                                                                                                                                                                                                                                                                            \\ \hline
\multicolumn{1}{|l|}{\begin{tabular}[c]{@{}l@{}}(a) $t_{124}= t_{134} = t_{234} = 0$ \\ $\Rightarrow 0 \leq t_{14} \leq 5 \Rightarrow t_{14} =2;$ $0\leq t_{24} \leq 1\Rightarrow t_{24} = 0;$\\ $0\leq t_{34} \leq 1 \Rightarrow t_{34} = 0$\\ \\ (b) $t_{124}= t_{134} = 0$; $t_{234} = 2$ $\Rightarrow 2\leq t_{24} \leq 3$\\ \\ (c) $t_{124}= t_{134} = 0$; $t_{234} = 4$\\ $\Rightarrow 0 \leq t_{14} \leq 5 \Rightarrow t_{14} =2;$ $4\leq t_{24} \leq 5 \Rightarrow t_{24} = 4$;\\ $4\leq t_{34} \leq 5 \Rightarrow t_{34} = 4$\end{tabular}} & \begin{tabular}[c]{@{}l@{}}(a) $t_{124}= t_{134} = t_{234} = 2$ \\ $\Rightarrow 3 \leq t_{14} \leq 8 \Rightarrow t_{14} =6$; $3\leq t_{24} \leq 4 \Rightarrow t_{24} = 4$;\\ $3\leq t_{34} \leq 4 \Rightarrow t_{34} = 4$\\ \\ (b) $t_{124}= t_{134} = 2$; $t_{234} = 4$ $\Rightarrow 5\leq t_{24} \leq 6$\\ \\ (c) $t_{124}= t_{134} = 2$; $t_{234} = 6$\\ $\Rightarrow 3 \leq t_{14} \leq 8 \Rightarrow t_{14} =6$; $7\leq t_{24} \leq 8 \Rightarrow t_{24} = 8$;\\ $7\leq t_{34} \leq 8 \Rightarrow t_{34} = 8$\end{tabular} \\ \hline
\multicolumn{2}{|l|}{1.2. $t_{13} = 6$ and $t_{23} = 2$}                                                                                                                                                                                                                                                                                                                                                                                                                                                                                                                                                                                                                                                                                                                                                                                                                                                                                                                                                                                                                               \\ \hline \hline
\multicolumn{1}{|l|}{1.2.1. $t=0$}                                                                                                                                                                                                                                                                                                                                                                                                                                                                                                                   & 1.2.2. $t=1$                                                                                                                                                                                                                                                                                                                                                                                                                                                                                                                    \\ \hline
\multicolumn{1}{|l|}{\begin{tabular}[c]{@{}l@{}}$0 \leq t_{124} \leq 1 \Rightarrow t_{124} =0$\\ $0 \leq t_{134} \leq 5 \Rightarrow t_{134} =0, 2$ or $4$\\ $0 \leq t_{234} \leq 1 \Rightarrow t_{234} =0$\end{tabular}}                                                                                                                                                                                                                                                                                                                             & \begin{tabular}[c]{@{}l@{}}$1 \leq t_{124} \leq 2 \Rightarrow t_{124} =2$\\ $1 \leq t_{134} \leq 6 \Rightarrow t_{134} =2, 4$ or $6$\\ $1 \leq t_{234} \leq 2 \Rightarrow t_{234} =2$\end{tabular}                                                                                                                                                                                                                                                                                                                              \\ \hline
\multicolumn{1}{|l|}{\begin{tabular}[c]{@{}l@{}}(a) $t_{124}= t_{134} = t_{234} = 0$ $\Rightarrow 0 \leq t_{14} \leq 1$ \\ \\ (b) $t_{124}= t_{234} = 0$; $t_{134} =  2$ $\Rightarrow 2 \leq t_{34} \leq 3$\\ \\ (c) $t_{124}= t_{234} = 0$; $t_{134} =  4$ $\Rightarrow 4 \leq t_{14} \leq 5$\end{tabular}}                                                                                                                                                                                                                                         & \begin{tabular}[c]{@{}l@{}}(a) $t_{124}= t_{134} = t_{234} = 2$ $\Rightarrow 3\leq t_{14} \leq 4$\\ \\ (b) $t_{124}= t_{234} = 2$; $t_{134} =  4$ $\Rightarrow 5\leq t_{34} \leq 6$\\ \\ (c) $t_{124}= t_{234} = 2$; $t_{134} =  6$ $\Rightarrow 7\leq t_{14} \leq 8$\end{tabular}                                                                                                                                                                                                                                              \\ \hline \hline
\multicolumn{2}{|l|}{\textbf{Case 2:} $t_{12} = 6$}                                                                                                                                                                                                                                                                                                                                                                                                                                                                                                                                                                                                                                                                                                                                                                                                                                                                                                                                                                                                                                             \\ \hline \hline
\multicolumn{2}{|l|}{2.1. $t_{123} = 1$}                                                                                                                                                                                                                                                                                                                                                                                                                                                                                                                                                                                                                                                                                                                                                                                                                                                                                                                                                                                                                                               \\ \hline \hline
\multicolumn{1}{|l|}{2.1.1. $t=0$}                                                                                                                                                                                                                                                                                                                                                                                                                                                                                                                   & 2.1.2. $t=1$                                                                                                                                                                                                                                                                                                                                                                                                                                                                                                                    \\ \hline
\multicolumn{1}{|l|}{\begin{tabular}[c]{@{}l@{}}$0 \leq t_{124} \leq 5 \Rightarrow t_{124} =0, 2$ or $4$\\ $0 \leq t_{134} \leq 1 \Rightarrow t_{134} =0$\\ $0 \leq t_{234} \leq 1 \Rightarrow t_{234} =0$\end{tabular}}                                                                                                                                                                                                                                                                                                                             & \begin{tabular}[c]{@{}l@{}}$1 \leq t_{124} \leq 6 \Rightarrow t_{124} =2, 4$ or $6$\\ $1 \leq t_{134} \leq 2 \Rightarrow t_{134} =2$\\ $1 \leq t_{234} \leq 2 \Rightarrow t_{234} =2$\end{tabular}                                                                                                                                                                                                                                                                                                                              \\ \hline
\multicolumn{1}{|l|}{\begin{tabular}[c]{@{}l@{}}(a) $t_{124}= t_{134} = t_{234} = 0$ $\Rightarrow 0 \leq t_{14} \leq 1$\\ \\ (b) $t_{124}= 2$; $t_{134} = t_{234} = 0$ $\Rightarrow 2\leq t_{24} \leq 3$\\ \\ (c) $t_{124}= 4$; $t_{134} = t_{234} = 0$ $\Rightarrow 4 \leq t_{14} \leq 5$\end{tabular}}                                                                                                                                                                                                                                             & \begin{tabular}[c]{@{}l@{}}(a) $t_{124}= t_{134} = t_{234} = 2$ $\Rightarrow 3\leq t_{14} \leq 4$\\ \\ (b) $t_{124}= 4$; $t_{134} = t_{234} = 2$ $\Rightarrow 5\leq t_{24} \leq 6$\\ \\ (c) $t_{124}= 6$; $t_{134} = t_{234} = 2$ $\Rightarrow 7 \leq t_{14} \leq 8$\end{tabular}                                                                                                                                                                                                                                               \\ \hline
\multicolumn{2}{|l|}{2.2. $t_{123}=5$}                                                                                                                                                                                                                                                                                                                                                                                                                                                                                                                                                                                                                                                                                                                                                                                                                                                                                                                                                                                                                                                 \\ \hline \hline
\multicolumn{1}{|l|}{\begin{tabular}[c]{@{}l@{}}(a) $t=0$: $0 \leq t_{ij4} \leq 1 \Rightarrow t_{ij4} =0$ $\Rightarrow 0 \leq t_{14} \leq 1$ \\ \\ (b) $t=1$: $1 \leq t_{ij4} \leq 2 \Rightarrow t_{ij4} =2$ $\Rightarrow 3 \leq t_{14} \leq 4$\\ \\ (c) $t=2$: $2 \leq t_{ij4} \leq 3 \Rightarrow t_{ij4} =2$ $\Rightarrow 2 \leq t_{24} \leq 3$\end{tabular}}                                                                                                                                                                                      & \begin{tabular}[c]{@{}l@{}}(d) $t=3$: $3 \leq t_{ij4} \leq 4 \Rightarrow t_{ij4} =4$ $\Rightarrow 5 \leq t_{24} \leq 6$\\ \\ (e) $t=4$: $4 \leq t_{ij4} \leq 5 \Rightarrow t_{ij4} =4$ $\Rightarrow 4 \leq t_{14} \leq 5$\\ \\ (f) $t=5$: $5 \leq t_{ij4} \leq 6 \Rightarrow t_{ij4} =6$; $\Rightarrow 7 \leq t_{14} \leq 8$\end{tabular}                                                                                                                                                                                       \\ \hline
\end{tabular}
}
\end{table}

\end{proof}

As a direct consequence of the Theorem \ref{theorem3.6}, we have the following corollary.

\begin{corollary}\label{corollary4.4}Each minimal representation of the code loops $C_{1}^{4},\dots,C_{16}^{4}$ has the following degree and type, respectively:
\end{corollary}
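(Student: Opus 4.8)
The plan is to read off both invariants directly from the explicit description of the minimal representations in Theorem~\ref{theorem3.6}. For each $i=1,\dots,16$ that theorem produces a concrete doubly even code $V_i=\langle v_1,v_2,v_3,v_4\rangle$ and shows that, up to equivalence of codes, it is the \emph{unique} minimal representation of $C_i^4$. Hence the degree and the type of a minimal representation of $C_i^4$ are genuine invariants: any other minimal representation differs from $V_i$ by a permutation $\varphi\in S_m$ of the coordinates, and such a $\varphi$ preserves the length $m$ and maps the partition of $I_m$ into equivalence classes bijectively onto the partition associated with $V_i^\varphi$ (this uses Lemma~\ref{ind.generators} together with the definition of equivalent codes). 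So it suffices to compute the degree and the type of each $V_i$ separately.

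First I would record the degree: since the generators of $V_i$ are written so that $v_1\cup v_2\cup v_3\cup v_4=I_m$, the degree $m$ is just the largest coordinate appearing in $v_1,v_2,v_3,v_4$. Next I would compute the type. For every $\sigma\subseteq\{1,2,3,4\}$ I form the set
\[
X_\sigma=\Bigl(\bigcap_{j\in\sigma}v_j\Bigr)\setminus\Bigl(\bigcup_{j\notin\sigma}v_j\Bigr)
\]
as in Remark~\ref{rem3.7} (equivalently, $v^\sigma$ of (\ref{lemmaequivalenceclasses}) for the basis $v_1,v_2,v_3,v_4$), and put $x_\sigma=|X_\sigma|$. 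By Lemma~\ref{lemma3.1} the nonempty $X_\sigma$ are exactly the equivalence classes on $I_m$, so listing the values $x_\sigma>0$ in nondecreasing order gives the type of $V_i$. Carrying this out for $i=1,\dots,16$ yields the table. As a sample computation, for $V_1=\langle(1,2,3,4),(1,2,5,6),(1,3,5,7),(1-8)\rangle$ one gets $m=8$ and $X_{1234}=\{1\}$, $X_{124}=\{2\}$, $X_{134}=\{3\}$, $X_{14}=\{4\}$, $X_{234}=\{5\}$, $X_{24}=\{6\}$, $X_{34}=\{7\}$, $X_4=\{8\}$, with all other $X_\sigma$ empty; so the type is $(1,1,1,1,1,1,1,1)$ and the degree is $8$.

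The remaining fifteen cases go the same way: each amounts to intersecting and complementing four explicitly listed subsets of a small index set, so no new argument is involved. The one point that needs attention --- and which I would flag as the main, though mild, obstacle --- is the bookkeeping when generators are written in abbreviated range notation such as $(1-12)$ or $(1-8,13-16)$: one must be consistent about which coordinate lies in which generator, and it is worth checking in each case that $\sum_\sigma x_\sigma=m$ and that the resulting $x_\sigma$ are compatible with the congruences of Remark~\ref{rem2.4}, which provides a convenient internal consistency check. Beyond this verification there is no substantive difficulty.
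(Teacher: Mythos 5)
Your proposal is correct and matches the paper's treatment: the paper states the corollary as a direct consequence of Theorem~\ref{theorem3.6}, with the degree and type read off from the explicit generators of each $V_i$ via the equivalence classes of Lemma~\ref{lemma3.1} and Remark~\ref{rem3.7}, exactly as you do (and your sample computation for $V_1$ agrees with the table). The extra remark that a coordinate permutation preserves degree and type is a sensible, if implicit in the paper, justification that these are invariants of the minimal representation.
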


{\begin{center}\begin{tabular}{l|l|l|l|l|l} 
\multicolumn{1}{c|}{$i$} & \multicolumn{1}{c|}{$\mbox{deg}(V_{i})$} & \multicolumn{1}{c|}{type of $V_{i}$}&\multicolumn{1}{c|}{$i$} & \multicolumn{1}{c|}{$\mbox{deg}(V_{i})$} & \multicolumn{1}{c}{type of $V_{i}$} \\ 
\hline
\multicolumn{1}{c|}{1} & \multicolumn{1}{c|}{8} & (11111111) & 
\multicolumn{1}{c|}{2} & \multicolumn{1}{c|}{14} & (11111111222) \\ 
\multicolumn{1}{c|}{3} & \multicolumn{1}{c|}{12} & (111111114) &
\multicolumn{1}{c|}{4} & \multicolumn{1}{c|}{18} & (11111111226) \\ 
\multicolumn{1}{c|}{5} & \multicolumn{1}{c|}{18} & (111111112224) &
\multicolumn{1}{c|}{6} & \multicolumn{1}{c|}{11} & (11111114) \\ 
\multicolumn{1}{c|}{7} & \multicolumn{1}{c|}{17} & (11113334) &
\multicolumn{1}{c|}{8} & \multicolumn{1}{c|}{17} & (11111122223) \\ 
\multicolumn{1}{c|}{9} & \multicolumn{1}{c|}{19} & (11111222233) &
\multicolumn{1}{c|}{10} & \multicolumn{1}{c|}{19} & (111223333) \\ 
\multicolumn{1}{c|}{11} & \multicolumn{1}{c|}{17} & (111122333) & 
\multicolumn{1}{c|}{12} & \multicolumn{1}{c|}{17} & (1111112234) \\ 
\multicolumn{1}{c|}{13} & \multicolumn{1}{c|}{17} & (111111236) &
\multicolumn{1}{c|}{14} & \multicolumn{1}{c|}{13} & (111111223) \\ 
\multicolumn{1}{c|}{15} & \multicolumn{1}{c|}{17} & (111111227) &
\multicolumn{1}{c|}{16} & \multicolumn{1}{c|}{17} & (111112235) \\ 
\end{tabular}\end{center}}

Note that in the case of code loops of rank 3 and 4 the type of code loop define this loop up to isomophism. May be it is true in general case.




%

Let's see a illustration. According to Corollary \ref{corollary4.4}, the minimal representation $V_{3}$ has 9 equivalence classes. These classes are presented in the following diagram:
\vspace{-2cm}

\begin{figure}[h!]
\centering
\definecolor{uuuuuu}{rgb}{0.26666666666666666,0.26666666666666666,0.26666666666666666}
\definecolor{ffffff}{rgb}{1,1,1}
\begin{tikzpicture}[line cap=round,line join=round,>=triangle 45,x=1.5cm,y=0.8cm,scale=1.8]
\clip(-0.5,-0.3) rectangle (8.0,5.6);
\fill[line width=0.8pt,color=ffffff] (4,0) -- (4,4) -- (0,4) -- (0,0) -- cycle;
\draw [green,line width=0.7pt] (1,4)-- (1,0);
\draw [orange,line width=0.7pt] (2,4)-- (2,0);
\draw [green,line width=0.7pt] (3,4)-- (3,0);
\draw [blue,line width=0.7pt] (0,3)-- (4,3);
\draw [red,line width=0.7pt] (0,2)-- (4,2);
\draw [blue,line width=0.7pt] (0,1)-- (4,1);
\draw [black,line width=0.7pt] (0,4)-- (4,4);
\draw [orange,line width=0.7pt] (4,4)-- (4,0);
\draw [red,line width=0.7pt] (4,0)-- (0,0);
\draw [black,line width=0.7pt] (0,0)-- (0,4);
\draw (2.3,1.65) node[anchor=north west] {$1$};
\draw (1.2,1.8) node[anchor=north west] {$2$};
\draw (1.2,1.4) node[anchor=north west] {$4$};
\draw (1.6,1.4) node[anchor=north west] {$5$};
\draw (1.6,1.8) node[anchor=north west] {$3$};
\draw (2.3,2.65) node[anchor=north west] {$6$};
\draw (3.3,1.65) node[anchor=north west] {$7$};
\draw (3.3,2.65) node[anchor=north west] {$8$};
\draw (2.3,0.65) node[anchor=north west] {$9$};
\draw (3.3,0.65) node[anchor=north west] {$11$};
\draw (2.3,3.65) node[anchor=north west] {$10$};
\draw (3.3,3.65) node[anchor=north west] {$12$};
\draw (-0.2,0.65) node[anchor=north west] {$v_{3}$};
\draw (-0.2,2.65) node[anchor=north west] {$v_{1}$};
\draw (-0.5,1.65) node[anchor=north west] {$v_{1}\cap v_{3}$};
\draw (1.3,4.35) node[anchor=north west] {$v_{2}$};
\draw (2.2,4.35) node[anchor=north west] {$v_{2}\cap v_{4}$};
\draw (3.3,4.35) node[anchor=north west] {$v_{4}$};

\begin{scriptsize}
\draw [fill=uuuuuu] (0,4) circle (0.1pt);
\draw [fill=uuuuuu] (4,4) circle (0.1pt);
\draw [fill=uuuuuu] (4,0) circle (0.1pt);
\draw [fill=uuuuuu] (0,0) circle (0.1pt);
\draw [fill=uuuuuu] (1.2,1.6) circle (0.8pt); 
\draw [fill=uuuuuu] (2.3,1.45) circle (0.8pt);
\draw [fill=uuuuuu] (1.6,1.6) circle (0.8pt);
\draw [fill=uuuuuu] (1.2,1.2) circle (0.8pt); 
\draw [fill=uuuuuu] (1.6,1.2) circle (0.8pt); 
\draw [fill=uuuuuu] (2.3,2.45) circle (0.8pt); 
\draw [fill=uuuuuu] (3.3,1.45) circle (0.8pt); 
\draw [fill=uuuuuu] (3.3,2.45) circle (0.8pt); 
\draw [fill=uuuuuu] (2.3,0.45) circle (0.8pt); 
\draw [fill=uuuuuu] (2.3,3.45) circle (0.8pt); 
\draw [fill=uuuuuu] (3.3,0.45) circle (0.8pt); 
\draw [fill=uuuuuu] (3.3,3.45) circle (0.8pt); 

\end{scriptsize}
\end{tikzpicture}
\end{figure}

\section*{Acknowledgments}
For financial support, the first author thanks National Council for Scientific and Technological Development - CNPq (grant 406932/2023-9); the second author thanks  FAPESP (grant 2018/23690-6), CNPq (grants 307593/2023-1 and 406932/2023-9) and in accordance with the state task of the IM SB RAS, project FWNF-2022-003; and the third author thanks FAPESP (grant 2021/12820-9) and CNPq (grant 406932/2023-9). The authors are grateful to the anonymous referees for their valuable suggestions.

\newpage

\appendix

\newpage
\section{Diagrams: Minimal representations of code loops of rank $4$} \label{diagrams}
\begin{figure}[h!]
\centering
\definecolor{xdxdff}{rgb}{0.49019607843137253,0.49019607843137253,1}
\begin{tikzpicture}[line cap=round,line join=round,>=triangle 45,x=0.7cm,y=0.7cm, scale=0.9]

\clip(-0.9,-4) rectangle (19,4.3);
\draw [line width=0.6pt] (1,0)-- (8,0);
\begin{scriptsize}
\draw [fill=xdxdff] (1,0) circle (0.6pt);
\draw[color=xdxdff] (1,-0.1) node[below] {$1$};
\draw [fill=xdxdff] (2,0) circle (0.6pt);
\draw[color=xdxdff] (2,-0.1) node[below] {$2$};
\draw [fill=xdxdff] (3,0) circle (0.6pt);
\draw[color=xdxdff] (3,-0.1) node[below] {$3$};
\draw [fill=xdxdff] (4,0) circle (0.6pt);
\draw[color=xdxdff] (4,-0.1) node[below] {$4$};
\draw [fill=xdxdff] (5,0) circle (0.6pt);
\draw[color=xdxdff] (5,-0.1) node[below] {$5$};
\draw [fill=xdxdff] (6,0) circle (0.6pt);
\draw[color=xdxdff] (6,-0.1) node[below] {$6$};
\draw [fill=xdxdff] (7,0) circle (0.6pt);
\draw[color=xdxdff] (7,-0.1) node[below] {$7$};
\draw [fill=xdxdff] (8,0) circle (0.6pt);
\draw[color=xdxdff] (8,-0.1) node[below] {$8$};
\draw [line width=0.6pt] (1,4)-- (4,4);
\node  at (3bp,80bp) {\normalsize $v_1$};
\draw [line width=0.6pt] (1,3)-- (2,3);
\draw [line width=0.6pt] (5,3)-- (6,3);
\node  at (3bp,60bp) {\normalsize $v_2$};
\draw [fill=xdxdff] (1,2) circle (0.6pt);
\draw [fill=xdxdff] (3,2) circle (0.6pt);
\draw [fill=xdxdff] (5,2) circle (0.6pt);
\draw [fill=xdxdff] (7,2) circle (0.6pt);
\node  at (3bp,40bp) {\normalsize $v_3$};
\draw [line width=0.6pt] (1,1)-- (8,1);
\node  at (3bp,20bp) {\normalsize $v_4$};
\node (7) at (80bp,-11.0bp) [label=270:{{{\normalsize $V_{1}$}}}] {};

\end{scriptsize}
\end{tikzpicture}\vspace{-2.5cm}
\end{figure}

\vspace{0.2in}
\begin{figure}[h!]
\centering
\definecolor{xdxdff}{rgb}{0.49019607843137253,0.49019607843137253,1}
\begin{tikzpicture}[line cap=round,line join=round,>=triangle 45,x=0.7cm,y=0.7cm, scale=0.9]

\clip(-0.9,-4) rectangle (19,4.3);
\draw [line width=0.6pt] (1,0)-- (14,0);
\begin{scriptsize}
\draw [fill=xdxdff] (1,0) circle (0.6pt);
\draw[color=xdxdff] (1,-0.1) node[below] {$1$};
\draw [fill=xdxdff] (2,0) circle (0.6pt);
\draw[color=xdxdff] (2,-0.1) node[below] {$2$};
\draw [fill=xdxdff] (3,0) circle (0.6pt);
\draw[color=xdxdff] (3,-0.1) node[below] {$3$};
\draw [fill=xdxdff] (4,0) circle (0.6pt);
\draw[color=xdxdff] (4,-0.1) node[below] {$4$};
\draw [fill=xdxdff] (5,0) circle (0.6pt);
\draw[color=xdxdff] (5,-0.1) node[below] {$5$};
\draw [fill=xdxdff] (6,0) circle (0.6pt);
\draw[color=xdxdff] (6,-0.1) node[below] {$6$};
\draw [fill=xdxdff] (7,0) circle (0.6pt);
\draw[color=xdxdff] (7,-0.1) node[below] {$7$};
\draw [fill=xdxdff] (8,0) circle (0.6pt);
\draw[color=xdxdff] (8,-0.1) node[below] {$8$};
\draw [fill=xdxdff] (9,0) circle (0.6pt);
\draw[color=xdxdff] (9,-0.1) node[below] {$9$};
\draw [fill=xdxdff] (10,0) circle (0.6pt);
\draw[color=xdxdff] (10,-0.1) node[below] {$10$};
\draw [fill=xdxdff] (11,0) circle (0.6pt);
\draw[color=xdxdff] (11,-0.1) node[below] {$11$};	
\draw [fill=xdxdff] (12,0) circle (0.6pt);
\draw[color=xdxdff] (12,-0.1) node[below] {$12$};
\draw [fill=xdxdff] (14,0) circle (0.6pt);
\draw[color=xdxdff] (14,-0.1) node[below] {$14$};
\draw [fill=xdxdff] (13,0) circle (0.6pt);
\draw[color=xdxdff] (13,-0.1) node[below] {$13$};
\draw [line width=0.6pt] (1,4)-- (8,4);
\node  at (3bp,80bp) {\normalsize $v_1$};
\draw [line width=0.6pt] (1,3)-- (4,3);
\draw [line width=0.6pt] (9,3)-- (12,3);
\node  at (3bp,60bp) {\normalsize $v_2$};
\draw [line width=0.6pt] (5,2)-- (7,2);
\draw [line width=0.6pt] (9,2)-- (11,2);
\draw [fill=xdxdff] (1,2) circle (0.6pt);
\draw [fill=xdxdff] (13,2) circle (0.6pt);
\node  at (3bp,40bp) {\normalsize $v_3$};
\draw [line width=0.6pt] (1,1)-- (2,1);
\draw [fill=xdxdff] (5,1) circle (0.6pt);
\draw [line width=0.6pt] (8,1)-- (9,1);
\draw [fill=xdxdff] (14,1) circle (0.6pt);
\draw [line width=0.6pt] (12,1)-- (13,1);
\node  at (3bp,20bp) {\normalsize $v_4$};
\node (7) at (140bp,-11.0bp) [label=270:{{{\normalsize $V_{2}$}}}] {};

\end{scriptsize}
\end{tikzpicture}\vspace{-2.5cm}

\end{figure}
\vspace{0.2in}

\begin{figure}[h!]
\centering
\definecolor{xdxdff}{rgb}{0.49019607843137253,0.49019607843137253,1}
\begin{tikzpicture}[line cap=round,line join=round,>=triangle 45,x=0.7cm,y=0.7cm, scale=0.9]

\clip(-0.9,-4) rectangle (19,4.3);
\draw [line width=0.6pt] (1,0)-- (12,0);
\begin{scriptsize}
\draw [fill=xdxdff] (1,0) circle (0.6pt);
\draw[color=xdxdff] (1,-0.1) node[below] {$1$};
\draw [fill=xdxdff] (2,0) circle (0.6pt);
\draw[color=xdxdff] (2,-0.1) node[below] {$2$};
\draw [fill=xdxdff] (3,0) circle (0.6pt);
\draw[color=xdxdff] (3,-0.1) node[below] {$3$};
\draw [fill=xdxdff] (4,0) circle (0.6pt);
\draw[color=xdxdff] (4,-0.1) node[below] {$4$};
\draw [fill=xdxdff] (5,0) circle (0.6pt);
\draw[color=xdxdff] (5,-0.1) node[below] {$5$};
\draw [fill=xdxdff] (6,0) circle (0.6pt);
\draw[color=xdxdff] (6,-0.1) node[below] {$6$};
\draw [fill=xdxdff] (7,0) circle (0.6pt);
\draw[color=xdxdff] (7,-0.1) node[below] {$7$};
\draw [fill=xdxdff] (8,0) circle (0.6pt);
\draw[color=xdxdff] (8,-0.1) node[below] {$8$};
\draw [fill=xdxdff] (9,0) circle (0.6pt);
\draw[color=xdxdff] (9,-0.1) node[below] {$9$};
\draw [fill=xdxdff] (10,0) circle (0.6pt);
\draw[color=xdxdff] (10,-0.1) node[below] {$10$};
\draw [fill=xdxdff] (11,0) circle (0.6pt);
\draw[color=xdxdff] (11,-0.1) node[below] {$11$};	
\draw [fill=xdxdff] (12,0) circle (0.6pt);
\draw[color=xdxdff] (12,-0.1) node[below] {$12$};
\draw [line width=0.6pt] (1,4)-- (8,4);
\node  at (3bp,80bp) {\normalsize $v_1$};
\draw [line width=0.6pt] (1,3)-- (6,3);
\draw [line width=0.6pt] (9,3)-- (10,3);
\node  at (3bp,60bp) {\normalsize $v_2$};
\draw [line width=0.6pt] (1,2)-- (5,2);
\draw [line width=0.6pt] (7,2)-- (7,2);
\draw [line width=0.6pt] (9,2)-- (9,2);
\draw [line width=0.6pt] (11,2)-- (11,2);
\draw [fill=xdxdff] (7,2) circle (0.6pt);
\draw [fill=xdxdff] (9,2) circle (0.6pt);
\draw [fill=xdxdff] (11,2) circle (0.6pt);
\node  at (3bp,40bp) {\normalsize $v_3$};
\draw [fill=xdxdff] (1,1) circle (0.6pt);
\draw [line width=0.6pt] (6,1)-- (12,1);
\node  at (3bp,20bp) {\normalsize $v_4$};
\node (7) at (120bp,-11.0bp) [label=270:{{{\normalsize $V_{3}$}}}] {};

\end{scriptsize}
\end{tikzpicture}\vspace{-2.5cm}

\end{figure}
\vspace{0.2in}
\begin{figure}[h!]
\centering
\definecolor{xdxdff}{rgb}{0.49019607843137253,0.49019607843137253,1}
\begin{tikzpicture}[line cap=round,line join=round,>=triangle 45,x=0.7cm,y=0.7cm, scale=0.9]

\clip(-0.9,-4) rectangle (19,4.3);
\draw [line width=0.6pt] (1,0)-- (18,0);
\begin{scriptsize}
\draw [fill=xdxdff] (1,0) circle (0.6pt);
\draw[color=xdxdff] (1,-0.1) node[below] {$1$};
\draw [fill=xdxdff] (2,0) circle (0.6pt);
\draw[color=xdxdff] (2,-0.1) node[below] {$2$};
\draw [fill=xdxdff] (3,0) circle (0.6pt);
\draw[color=xdxdff] (3,-0.1) node[below] {$3$};
\draw [fill=xdxdff] (4,0) circle (0.6pt);
\draw[color=xdxdff] (4,-0.1) node[below] {$4$};
\draw [fill=xdxdff] (5,0) circle (0.6pt);
\draw[color=xdxdff] (5,-0.1) node[below] {$5$};
\draw [fill=xdxdff] (6,0) circle (0.6pt);
\draw[color=xdxdff] (6,-0.1) node[below] {$6$};
\draw [fill=xdxdff] (7,0) circle (0.6pt);
\draw[color=xdxdff] (7,-0.1) node[below] {$7$};
\draw [fill=xdxdff] (8,0) circle (0.6pt);
\draw[color=xdxdff] (8,-0.1) node[below] {$8$};
\draw [fill=xdxdff] (9,0) circle (0.6pt);
\draw[color=xdxdff] (9,-0.1) node[below] {$9$};
\draw [fill=xdxdff] (10,0) circle (0.6pt);
\draw[color=xdxdff] (10,-0.1) node[below] {$10$};
\draw [fill=xdxdff] (11,0) circle (0.6pt);
\draw[color=xdxdff] (11,-0.1) node[below] {$11$};	
\draw [fill=xdxdff] (12,0) circle (0.6pt);
\draw[color=xdxdff] (12,-0.1) node[below] {$12$};
\draw [fill=xdxdff] (13,0) circle (0.6pt);
\draw[color=xdxdff] (13,-0.1) node[below] {$13$};
\draw [fill=xdxdff] (14,0) circle (0.6pt);
\draw[color=xdxdff] (14,-0.1) node[below] {$14$};
\draw [fill=xdxdff] (15,0) circle (0.6pt);
\draw[color=xdxdff] (15,-0.1) node[below] {$15$};
\draw [fill=xdxdff] (16,0) circle (0.6pt);
\draw[color=xdxdff] (16,-0.1) node[below] {$16$};
\draw [fill=xdxdff] (17,0) circle (0.6pt);
\draw[color=xdxdff] (17,-0.1) node[below] {$17$};
\draw [fill=xdxdff] (18,0) circle (0.6pt);
\draw[color=xdxdff] (18,-0.1) node[below] {$18$};
\draw [line width=0.6pt] (1,4)-- (8,4);
\node  at (3bp,80bp) {\normalsize $v_1$};
\draw [line width=0.6pt] (1,3)-- (6,3);
\draw [line width=0.6pt] (9,3)-- (10,3);
\node  at (3bp,60bp) {\normalsize $v_2$};
\draw [line width=0.6pt] (1,2)-- (3,2);
\draw [line width=0.6pt] (11,2)-- (17,2);
\draw [fill=xdxdff] (7,2) circle (0.6pt);
\draw [fill=xdxdff] (9,2) circle (0.6pt);
\node  at (3bp,40bp) {\normalsize $v_3$};
\draw [fill=xdxdff] (1,1) circle (0.6pt);
\draw [fill=xdxdff] (4,1) circle (0.6pt);
\draw [fill=xdxdff] (18,1) circle (0.6pt);
\draw [line width=0.6pt] (7,1)-- (11,1);
\node  at (3bp,20bp) {\normalsize $v_4$};
\node (7) at (180bp,-11.0bp) [label=270:{{{\normalsize $V_{4}$}}}] {};

\end{scriptsize}
\end{tikzpicture}\vspace{-2cm}

\end{figure}

\vspace{0.2in}
\begin{figure}[h!]
\centering
\definecolor{xdxdff}{rgb}{0.49019607843137253,0.49019607843137253,1}
\begin{tikzpicture}[line cap=round,line join=round,>=triangle 45,x=0.7cm,y=0.7cm, scale=0.9]

\clip(-0.9,-4) rectangle (19,4.3);
\draw [line width=0.6pt] (1,0)-- (18,0);
\begin{scriptsize}
\draw [fill=xdxdff] (1,0) circle (0.6pt);
\draw[color=xdxdff] (1,-0.1) node[below] {$1$};
\draw [fill=xdxdff] (2,0) circle (0.6pt);
\draw[color=xdxdff] (2,-0.1) node[below] {$2$};
\draw [fill=xdxdff] (3,0) circle (0.6pt);
\draw[color=xdxdff] (3,-0.1) node[below] {$3$};
\draw [fill=xdxdff] (4,0) circle (0.6pt);
\draw[color=xdxdff] (4,-0.1) node[below] {$4$};
\draw [fill=xdxdff] (5,0) circle (0.6pt);
\draw[color=xdxdff] (5,-0.1) node[below] {$5$};
\draw [fill=xdxdff] (6,0) circle (0.6pt);
\draw[color=xdxdff] (6,-0.1) node[below] {$6$};
\draw [fill=xdxdff] (7,0) circle (0.6pt);
\draw[color=xdxdff] (7,-0.1) node[below] {$7$};
\draw [fill=xdxdff] (8,0) circle (0.6pt);
\draw[color=xdxdff] (8,-0.1) node[below] {$8$};
\draw [fill=xdxdff] (9,0) circle (0.6pt);
\draw[color=xdxdff] (9,-0.1) node[below] {$9$};
\draw [fill=xdxdff] (10,0) circle (0.6pt);
\draw[color=xdxdff] (10,-0.1) node[below] {$10$};
\draw [fill=xdxdff] (11,0) circle (0.6pt);
\draw[color=xdxdff] (11,-0.1) node[below] {$11$};	
\draw [fill=xdxdff] (12,0) circle (0.6pt);
\draw[color=xdxdff] (12,-0.1) node[below] {$12$};
\draw [fill=xdxdff] (13,0) circle (0.6pt);
\draw[color=xdxdff] (13,-0.1) node[below] {$13$};
\draw [fill=xdxdff] (14,0) circle (0.6pt);
\draw[color=xdxdff] (14,-0.1) node[below] {$14$};
\draw [fill=xdxdff] (15,0) circle (0.6pt);
\draw[color=xdxdff] (15,-0.1) node[below] {$15$};
\draw [fill=xdxdff] (16,0) circle (0.6pt);
\draw[color=xdxdff] (16,-0.1) node[below] {$16$};
\draw [fill=xdxdff] (17,0) circle (0.6pt);
\draw[color=xdxdff] (17,-0.1) node[below] {$17$};
\draw [fill=xdxdff] (18,0) circle (0.6pt);
\draw[color=xdxdff] (18,-0.1) node[below] {$18$};

\draw [line width=0.6pt] (1,4)-- (8,4);
\node  at (3bp,80bp) {\normalsize $v_1$};
\draw [line width=0.6pt] (1,3)-- (4,3);
\draw [line width=0.6pt] (9,3)-- (12,3);
\node  at (3bp,60bp) {\normalsize $v_2$};
\draw [line width=0.6pt] (13,2)-- (17,2);
\draw [fill=xdxdff] (1,2) circle (0.6pt);
\draw [fill=xdxdff] (5,2) circle (0.6pt);
\draw [fill=xdxdff] (9,2) circle (0.6pt);
\node  at (3bp,40bp) {\normalsize $v_3$};
\draw [line width=0.6pt] (1,1)-- (2,1);
\draw [line width=0.6pt] (5,1)-- (6,1);
\draw [line width=0.6pt] (9,1)-- (10,1);			
\draw [fill=xdxdff] (13,1) circle (0.6pt);
\draw [fill=xdxdff] (18,1) circle (0.6pt);
\node  at (3bp,20bp) {\normalsize $v_4$};
\node (7) at (180bp,-11.0bp) [label=270:{{{\normalsize $V_{5}$}}}] {};

\end{scriptsize}
\end{tikzpicture}\vspace{-2cm}

\end{figure}
\vspace{1.1in}

\begin{figure}[h!]
\centering
\definecolor{xdxdff}{rgb}{0.49019607843137253,0.49019607843137253,1}
\begin{tikzpicture}[line cap=round,line join=round,>=triangle 45,x=0.7cm,y=0.7cm, scale=0.9]

\clip(-0.9,-4) rectangle (19,4.3);
\draw [line width=0.6pt] (1,0)-- (11,0);
\begin{scriptsize}
\draw [fill=xdxdff] (1,0) circle (0.6pt);
\draw[color=xdxdff] (1,-0.1) node[below] {$1$};
\draw [fill=xdxdff] (2,0) circle (0.6pt);
\draw[color=xdxdff] (2,-0.1) node[below] {$2$};
\draw [fill=xdxdff] (3,0) circle (0.6pt);
\draw[color=xdxdff] (3,-0.1) node[below] {$3$};
\draw [fill=xdxdff] (4,0) circle (0.6pt);
\draw[color=xdxdff] (4,-0.1) node[below] {$4$};
\draw [fill=xdxdff] (5,0) circle (0.6pt);
\draw[color=xdxdff] (5,-0.1) node[below] {$5$};
\draw [fill=xdxdff] (6,0) circle (0.6pt);
\draw[color=xdxdff] (6,-0.1) node[below] {$6$};
\draw [fill=xdxdff] (7,0) circle (0.6pt);
\draw[color=xdxdff] (7,-0.1) node[below] {$7$};
\draw [fill=xdxdff] (8,0) circle (0.6pt);
\draw[color=xdxdff] (8,-0.1) node[below] {$8$};
\draw [fill=xdxdff] (9,0) circle (0.6pt);
\draw[color=xdxdff] (9,-0.1) node[below] {$9$};
\draw [fill=xdxdff] (10,0) circle (0.6pt);
\draw[color=xdxdff] (10,-0.1) node[below] {$10$};
\draw [fill=xdxdff] (11,0) circle (0.6pt);
\draw[color=xdxdff] (11,-0.1) node[below] {$11$};	
\draw [line width=0.6pt] (1,4)-- (4,4);
\node  at (3bp,80bp) {\normalsize $v_1$};
\draw [line width=0.6pt] (1,3)-- (2,3);
\draw [line width=0.6pt] (5,3)-- (6,3);
\node  at (3bp,60bp) {\normalsize $v_2$};
\draw [fill=xdxdff] (1,2) circle (0.6pt);
\draw [fill=xdxdff] (3,2) circle (0.6pt);
\draw [fill=xdxdff] (5,2) circle (0.6pt);
\draw [fill=xdxdff] (7,2) circle (0.6pt);
\node  at (3bp,40bp) {\normalsize $v_3$};
\draw [line width=0.6pt] (8,1)-- (11,1);
\node  at (3bp,20bp) {\normalsize $v_4$};
\node (7) at (110bp,-11.0bp) [label=270:{{{\normalsize $V_{6}$}}}] {};

\end{scriptsize}
\end{tikzpicture}\vspace{-2cm}

\end{figure}

\vspace{1.5in}


\begin{figure}[h!]
\centering
\definecolor{xdxdff}{rgb}{0.49019607843137253,0.49019607843137253,1}
\begin{tikzpicture}[line cap=round,line join=round,>=triangle 45,x=0.7cm,y=0.7cm, scale=0.9]

\clip(-0.9,-4) rectangle (19,4.3);
\draw [line width=0.6pt] (1,0)-- (16,0);
\begin{scriptsize}
\draw [fill=xdxdff] (1,0) circle (0.6pt);
\draw[color=xdxdff] (1,-0.1) node[below] {$1$};
\draw [fill=xdxdff] (2,0) circle (0.6pt);
\draw[color=xdxdff] (2,-0.1) node[below] {$2$};
\draw [fill=xdxdff] (3,0) circle (0.6pt);
\draw[color=xdxdff] (3,-0.1) node[below] {$3$};
\draw [fill=xdxdff] (4,0) circle (0.6pt);
\draw[color=xdxdff] (4,-0.1) node[below] {$4$};
\draw [fill=xdxdff] (5,0) circle (0.6pt);
\draw[color=xdxdff] (5,-0.1) node[below] {$5$};
\draw [fill=xdxdff] (6,0) circle (0.6pt);
\draw[color=xdxdff] (6,-0.1) node[below] {$6$};
\draw [fill=xdxdff] (7,0) circle (0.6pt);
\draw[color=xdxdff] (7,-0.1) node[below] {$7$};
\draw [fill=xdxdff] (8,0) circle (0.6pt);
\draw[color=xdxdff] (8,-0.1) node[below] {$8$};
\draw [fill=xdxdff] (9,0) circle (0.6pt);
\draw[color=xdxdff] (9,-0.1) node[below] {$9$};
\draw [fill=xdxdff] (10,0) circle (0.6pt);
\draw[color=xdxdff] (10,-0.1) node[below] {$10$};
\draw [fill=xdxdff] (11,0) circle (0.6pt);
\draw[color=xdxdff] (11,-0.1) node[below] {$11$};	
\draw [fill=xdxdff] (12,0) circle (0.6pt);
\draw[color=xdxdff] (12,-0.1) node[below] {$12$};
\draw [fill=xdxdff] (13,0) circle (0.6pt);
\draw[color=xdxdff] (13,-0.1) node[below] {$13$};
\draw [fill=xdxdff] (14,0) circle (0.6pt);
\draw[color=xdxdff] (14,-0.1) node[below] {$14$};
\draw [fill=xdxdff] (15,0) circle (0.6pt);
\draw[color=xdxdff] (15,-0.1) node[below] {$15$};
\draw [fill=xdxdff] (16,0) circle (0.6pt);
\draw[color=xdxdff] (16,-0.1) node[below] {$16$};
\draw [line width=0.6pt] (1,4)-- (8,4);
\node  at (3bp,80bp) {\normalsize $v_1$};
\draw [line width=0.6pt] (1,3)-- (4,3);
\draw [line width=0.6pt] (9,3)-- (12,3);
\node  at (3bp,60bp) {\normalsize $v_2$};
\draw [line width=0.6pt] (1,2)-- (3,2);
\draw [line width=0.6pt] (13,2)-- (15,2);
\draw [fill=xdxdff] (5,2) circle (0.6pt);
\draw [fill=xdxdff] (9,2) circle (0.6pt);
\node  at (3bp,40bp) {\normalsize $v_3$};
\draw [line width=0.6pt] (1,1)-- (16,1);
\node  at (3bp,20bp) {\normalsize $v_4$};
\node (7) at (160bp,-11.0bp) [label=270:{{{\normalsize $V_{7}$}}}] {};

\end{scriptsize}
\end{tikzpicture}\vspace{-2cm}

\end{figure}

\vspace{0.2in}
\begin{figure}[h!]
\centering
\definecolor{xdxdff}{rgb}{0.49019607843137253,0.49019607843137253,1}
\begin{tikzpicture}[line cap=round,line join=round,>=triangle 45,x=0.7cm,y=0.7cm, scale=0.9]

\clip(-0.9,-4) rectangle (19,4.3);
\draw [line width=0.6pt] (1,0)-- (17,0);
\begin{scriptsize}
\draw [fill=xdxdff] (1,0) circle (0.6pt);
\draw[color=xdxdff] (1,-0.1) node[below] {$1$};
\draw [fill=xdxdff] (2,0) circle (0.6pt);
\draw[color=xdxdff] (2,-0.1) node[below] {$2$};
\draw [fill=xdxdff] (3,0) circle (0.6pt);
\draw[color=xdxdff] (3,-0.1) node[below] {$3$};
\draw [fill=xdxdff] (4,0) circle (0.6pt);
\draw[color=xdxdff] (4,-0.1) node[below] {$4$};
\draw [fill=xdxdff] (5,0) circle (0.6pt);
\draw[color=xdxdff] (5,-0.1) node[below] {$5$};
\draw [fill=xdxdff] (6,0) circle (0.6pt);
\draw[color=xdxdff] (6,-0.1) node[below] {$6$};
\draw [fill=xdxdff] (7,0) circle (0.6pt);
\draw[color=xdxdff] (7,-0.1) node[below] {$7$};
\draw [fill=xdxdff] (8,0) circle (0.6pt);
\draw[color=xdxdff] (8,-0.1) node[below] {$8$};
\draw [fill=xdxdff] (9,0) circle (0.6pt);
\draw[color=xdxdff] (9,-0.1) node[below] {$9$};
\draw [fill=xdxdff] (10,0) circle (0.6pt);
\draw[color=xdxdff] (10,-0.1) node[below] {$10$};
\draw [fill=xdxdff] (11,0) circle (0.6pt);
\draw[color=xdxdff] (11,-0.1) node[below] {$11$};	
\draw [fill=xdxdff] (12,0) circle (0.6pt);
\draw[color=xdxdff] (12,-0.1) node[below] {$12$};
\draw [fill=xdxdff] (13,0) circle (0.6pt);
\draw[color=xdxdff] (13,-0.1) node[below] {$13$};
\draw [fill=xdxdff] (14,0) circle (0.6pt);
\draw[color=xdxdff] (14,-0.1) node[below] {$14$};
\draw [fill=xdxdff] (15,0) circle (0.6pt);
\draw[color=xdxdff] (15,-0.1) node[below] {$15$};
\draw [fill=xdxdff] (16,0) circle (0.6pt);
\draw[color=xdxdff] (16,-0.1) node[below] {$16$};
\draw [fill=xdxdff] (17,0) circle (0.6pt);
\draw[color=xdxdff] (17,-0.1) node[below] {$17$};
\draw [line width=0.6pt] (1,4)-- (8,4);
\node  at (3bp,80bp) {\normalsize $v_1$};
\draw [line width=0.6pt] (1,3)-- (4,3);
\draw [line width=0.6pt] (9,3)-- (12,3);
\node  at (3bp,60bp) {\normalsize $v_2$};
\draw [line width=0.6pt] (1,2)-- (3,2);
\draw [line width=0.6pt] (13,2)-- (15,2);
\draw [fill=xdxdff] (5,2) circle (0.6pt);
\draw [fill=xdxdff] (9,2) circle (0.6pt);
\node  at (3bp,40bp) {\normalsize $v_3$};
\draw [line width=0.6pt] (1,1)-- (2,1);
\draw [line width=0.6pt] (1,1)-- (2,1);
\draw [line width=0.6pt] (10,1)-- (11,1);
\draw [line width=0.6pt] (13,1)-- (14,1);
\draw [line width=0.6pt] (16,1)-- (17,1);
\node  at (3bp,20bp) {\normalsize $v_4$};
\node (7) at (170bp,-11.0bp) [label=270:{{{\normalsize $V_{8}$}}}] {};

\end{scriptsize}
\end{tikzpicture}\vspace{-2cm}

\end{figure}

\begin{figure}[h!]
\centering
\definecolor{xdxdff}{rgb}{0.49019607843137253,0.49019607843137253,1}
\begin{tikzpicture}[line cap=round,line join=round,>=triangle 45,x=0.7cm,y=0.7cm, scale=0.9]

\clip(-0.9,-4) rectangle (20,4.3);
\draw [line width=0.6pt] (1,0)-- (19,0);
\begin{scriptsize}
\draw [fill=xdxdff] (1,0) circle (0.6pt);
\draw[color=xdxdff] (1,-0.1) node[below] {$1$};
\draw [fill=xdxdff] (2,0) circle (0.6pt);
\draw[color=xdxdff] (2,-0.1) node[below] {$2$};
\draw [fill=xdxdff] (3,0) circle (0.6pt);
\draw[color=xdxdff] (3,-0.1) node[below] {$3$};
\draw [fill=xdxdff] (4,0) circle (0.6pt);
\draw[color=xdxdff] (4,-0.1) node[below] {$4$};
\draw [fill=xdxdff] (5,0) circle (0.6pt);
\draw[color=xdxdff] (5,-0.1) node[below] {$5$};
\draw [fill=xdxdff] (6,0) circle (0.6pt);
\draw[color=xdxdff] (6,-0.1) node[below] {$6$};
\draw [fill=xdxdff] (7,0) circle (0.6pt);
\draw[color=xdxdff] (7,-0.1) node[below] {$7$};
\draw [fill=xdxdff] (8,0) circle (0.6pt);
\draw[color=xdxdff] (8,-0.1) node[below] {$8$};
\draw [fill=xdxdff] (9,0) circle (0.6pt);
\draw[color=xdxdff] (9,-0.1) node[below] {$9$};
\draw [fill=xdxdff] (10,0) circle (0.6pt);
\draw[color=xdxdff] (10,-0.1) node[below] {$10$};
\draw [fill=xdxdff] (11,0) circle (0.6pt);
\draw[color=xdxdff] (11,-0.1) node[below] {$11$};	
\draw [fill=xdxdff] (12,0) circle (0.6pt);
\draw[color=xdxdff] (12,-0.1) node[below] {$12$};
\draw [fill=xdxdff] (13,0) circle (0.6pt);
\draw[color=xdxdff] (13,-0.1) node[below] {$13$};
\draw [fill=xdxdff] (14,0) circle (0.6pt);
\draw[color=xdxdff] (14,-0.1) node[below] {$14$};
\draw [fill=xdxdff] (15,0) circle (0.6pt);
\draw[color=xdxdff] (15,-0.1) node[below] {$15$};
\draw [fill=xdxdff] (16,0) circle (0.6pt);
\draw[color=xdxdff] (16,-0.1) node[below] {$16$};
\draw [fill=xdxdff] (17,0) circle (0.6pt);
\draw[color=xdxdff] (17,-0.1) node[below] {$17$};
\draw [fill=xdxdff] (18,0) circle (0.6pt);
\draw[color=xdxdff] (18,-0.1) node[below] {$18$};
\draw [fill=xdxdff] (19,0) circle (0.6pt);
\draw[color=xdxdff] (19,-0.1) node[below] {$19$};

\draw [line width=0.6pt] (1,4)-- (8,4);
\node  at (3bp,80bp) {\normalsize $v_1$};
\draw [line width=0.6pt] (1,3)-- (4,3);
\draw [line width=0.6pt] (9,3)-- (16,3);
\node  at (3bp,60bp) {\normalsize $v_2$};
\draw [line width=0.6pt] (5,2)-- (7,2);
\draw [line width=0.6pt] (9,2)-- (11,2);
\draw [fill=xdxdff] (1,2) circle (0.6pt);
\draw [fill=xdxdff] (17,2) circle (0.6pt);
\node  at (3bp,40bp) {\normalsize $v_3$};
\draw [line width=0.6pt] (5,1)-- (6,1);
\draw [line width=0.6pt] (9,1)-- (10,1);
\draw [line width=0.6pt] (12,1)-- (13,1);
\draw [line width=0.6pt] (18,1)-- (19,1);
\node  at (3bp,20bp) {\normalsize $v_4$};
\node (7) at (190bp,-11.0bp) [label=270:{{{\normalsize $V_{9}$}}}] {};

\end{scriptsize}
\end{tikzpicture}\vspace{-2cm}

\end{figure}

\begin{figure}[h!]
\centering
\definecolor{xdxdff}{rgb}{0.49019607843137253,0.49019607843137253,1}
\begin{tikzpicture}[line cap=round,line join=round,>=triangle 45,x=0.7cm,y=0.7cm, scale=0.9]

\clip(-0.9,-4) rectangle (20,4.3);
\draw [line width=0.6pt] (1,0)-- (19,0);
\begin{scriptsize}
\draw [fill=xdxdff] (1,0) circle (0.6pt);
\draw[color=xdxdff] (1,-0.1) node[below] {$1$};
\draw [fill=xdxdff] (2,0) circle (0.6pt);
\draw[color=xdxdff] (2,-0.1) node[below] {$2$};
\draw [fill=xdxdff] (3,0) circle (0.6pt);
\draw[color=xdxdff] (3,-0.1) node[below] {$3$};
\draw [fill=xdxdff] (4,0) circle (0.6pt);
\draw[color=xdxdff] (4,-0.1) node[below] {$4$};
\draw [fill=xdxdff] (5,0) circle (0.6pt);
\draw[color=xdxdff] (5,-0.1) node[below] {$5$};
\draw [fill=xdxdff] (6,0) circle (0.6pt);
\draw[color=xdxdff] (6,-0.1) node[below] {$6$};
\draw [fill=xdxdff] (7,0) circle (0.6pt);
\draw[color=xdxdff] (7,-0.1) node[below] {$7$};
\draw [fill=xdxdff] (8,0) circle (0.6pt);
\draw[color=xdxdff] (8,-0.1) node[below] {$8$};
\draw [fill=xdxdff] (9,0) circle (0.6pt);
\draw[color=xdxdff] (9,-0.1) node[below] {$9$};
\draw [fill=xdxdff] (10,0) circle (0.6pt);
\draw[color=xdxdff] (10,-0.1) node[below] {$10$};
\draw [fill=xdxdff] (11,0) circle (0.6pt);
\draw[color=xdxdff] (11,-0.1) node[below] {$11$};	
\draw [fill=xdxdff] (12,0) circle (0.6pt);
\draw[color=xdxdff] (12,-0.1) node[below] {$12$};
\draw [fill=xdxdff] (13,0) circle (0.6pt);
\draw[color=xdxdff] (13,-0.1) node[below] {$13$};
\draw [fill=xdxdff] (14,0) circle (0.6pt);
\draw[color=xdxdff] (14,-0.1) node[below] {$14$};
\draw [fill=xdxdff] (15,0) circle (0.6pt);
\draw[color=xdxdff] (15,-0.1) node[below] {$15$};
\draw [fill=xdxdff] (16,0) circle (0.6pt);
\draw[color=xdxdff] (16,-0.1) node[below] {$16$};
\draw [fill=xdxdff] (17,0) circle (0.6pt);
\draw[color=xdxdff] (17,-0.1) node[below] {$17$};
\draw [fill=xdxdff] (18,0) circle (0.6pt);
\draw[color=xdxdff] (18,-0.1) node[below] {$18$};
\draw [fill=xdxdff] (19,0) circle (0.6pt);
\draw[color=xdxdff] (19,-0.1) node[below] {$19$};

\draw [line width=0.6pt] (1,4)-- (8,4);
\node  at (3bp,80bp) {\normalsize $v_1$};
\draw [line width=0.6pt] (1,3)-- (2,3);
\draw [line width=0.6pt] (9,3)-- (14,3);
\node  at (3bp,60bp) {\normalsize $v_2$};
\draw [line width=0.6pt] (9,2)-- (11,2);
\draw [line width=0.6pt] (15,2)-- (17,2);
\draw [fill=xdxdff] (1,2) circle (0.6pt);
\draw [fill=xdxdff] (3,2) circle (0.6pt);
\node  at (3bp,40bp) {\normalsize $v_3$};
\draw [line width=0.6pt] (4,1)-- (5,1);
\draw [line width=0.6pt] (18,1)-- (19,1);
\node  at (3bp,20bp) {\normalsize $v_4$};
\node (7) at (180bp,-11.0bp) [label=270:{{{\normalsize $V_{10}$}}}] {};

\end{scriptsize}
\end{tikzpicture}\vspace{-2cm}

\end{figure}
\vspace{1in}
\begin{figure}[h!]
\centering
\definecolor{xdxdff}{rgb}{0.49019607843137253,0.49019607843137253,1}
\begin{tikzpicture}[line cap=round,line join=round,>=triangle 45,x=0.7cm,y=0.7cm, scale=0.9]

\clip(-0.9,-4) rectangle (19,4.3);
\draw [line width=0.6pt] (1,0)-- (17,0);
\begin{scriptsize}
\draw [fill=xdxdff] (1,0) circle (0.6pt);
\draw[color=xdxdff] (1,-0.1) node[below] {$1$};
\draw [fill=xdxdff] (2,0) circle (0.6pt);
\draw[color=xdxdff] (2,-0.1) node[below] {$2$};
\draw [fill=xdxdff] (3,0) circle (0.6pt);
\draw[color=xdxdff] (3,-0.1) node[below] {$3$};
\draw [fill=xdxdff] (4,0) circle (0.6pt);
\draw[color=xdxdff] (4,-0.1) node[below] {$4$};
\draw [fill=xdxdff] (5,0) circle (0.6pt);
\draw[color=xdxdff] (5,-0.1) node[below] {$5$};
\draw [fill=xdxdff] (6,0) circle (0.6pt);
\draw[color=xdxdff] (6,-0.1) node[below] {$6$};
\draw [fill=xdxdff] (7,0) circle (0.6pt);
\draw[color=xdxdff] (7,-0.1) node[below] {$7$};
\draw [fill=xdxdff] (8,0) circle (0.6pt);
\draw[color=xdxdff] (8,-0.1) node[below] {$8$};
\draw [fill=xdxdff] (9,0) circle (0.6pt);
\draw[color=xdxdff] (9,-0.1) node[below] {$9$};
\draw [fill=xdxdff] (10,0) circle (0.6pt);
\draw[color=xdxdff] (10,-0.1) node[below] {$10$};
\draw [fill=xdxdff] (11,0) circle (0.6pt);
\draw[color=xdxdff] (11,-0.1) node[below] {$11$};	
\draw [fill=xdxdff] (12,0) circle (0.6pt);
\draw[color=xdxdff] (12,-0.1) node[below] {$12$};
\draw [fill=xdxdff] (13,0) circle (0.6pt);
\draw[color=xdxdff] (13,-0.1) node[below] {$13$};
\draw [fill=xdxdff] (14,0) circle (0.6pt);
\draw[color=xdxdff] (14,-0.1) node[below] {$14$};
\draw [fill=xdxdff] (15,0) circle (0.6pt);
\draw[color=xdxdff] (15,-0.1) node[below] {$15$};
\draw [fill=xdxdff] (16,0) circle (0.6pt);
\draw[color=xdxdff] (16,-0.1) node[below] {$16$};
\draw [fill=xdxdff] (17,0) circle (0.6pt);
\draw[color=xdxdff] (17,-0.1) node[below] {$17$};
\draw [line width=0.6pt] (1,4)-- (8,4);
\node  at (3bp,80bp) {\normalsize $v_1$};
\draw [line width=0.6pt] (1,3)-- (4,3);
\draw [line width=0.6pt] (9,3)-- (12,3);
\node  at (3bp,60bp) {\normalsize $v_2$};
\draw [line width=0.6pt] (1,2)-- (3,2);
\draw [line width=0.6pt] (13,2)-- (15,2);
\draw [fill=xdxdff] (5,2) circle (0.6pt);
\draw [fill=xdxdff] (9,2) circle (0.6pt);
\node  at (3bp,40bp) {\normalsize $v_3$};
\draw [line width=0.6pt] (6,1)-- (7,1);
\draw [line width=0.6pt] (16,1)-- (17,1);
\node  at (3bp,20bp) {\normalsize $v_4$};
\node (7) at (170bp,-11.0bp) [label=270:{{{\normalsize $V_{11}$}}}] {};

\end{scriptsize}
\end{tikzpicture}\vspace{-2cm}

\end{figure}

\vspace{1.3in}

\begin{figure}[h!]
\centering
\definecolor{xdxdff}{rgb}{0.49019607843137253,0.49019607843137253,1}
\begin{tikzpicture}[line cap=round,line join=round,>=triangle 45,x=0.7cm,y=0.7cm, scale=0.9]

\clip(-0.9,-4) rectangle (19,4.3);
\draw [line width=0.6pt] (1,0)-- (17,0);
\begin{scriptsize}
\draw [fill=xdxdff] (1,0) circle (0.6pt);
\draw[color=xdxdff] (1,-0.1) node[below] {$1$};
\draw [fill=xdxdff] (2,0) circle (0.6pt);
\draw[color=xdxdff] (2,-0.1) node[below] {$2$};
\draw [fill=xdxdff] (3,0) circle (0.6pt);
\draw[color=xdxdff] (3,-0.1) node[below] {$3$};
\draw [fill=xdxdff] (4,0) circle (0.6pt);
\draw[color=xdxdff] (4,-0.1) node[below] {$4$};
\draw [fill=xdxdff] (5,0) circle (0.6pt);
\draw[color=xdxdff] (5,-0.1) node[below] {$5$};
\draw [fill=xdxdff] (6,0) circle (0.6pt);
\draw[color=xdxdff] (6,-0.1) node[below] {$6$};
\draw [fill=xdxdff] (7,0) circle (0.6pt);
\draw[color=xdxdff] (7,-0.1) node[below] {$7$};
\draw [fill=xdxdff] (8,0) circle (0.6pt);
\draw[color=xdxdff] (8,-0.1) node[below] {$8$};
\draw [fill=xdxdff] (9,0) circle (0.6pt);
\draw[color=xdxdff] (9,-0.1) node[below] {$9$};
\draw [fill=xdxdff] (10,0) circle (0.6pt);
\draw[color=xdxdff] (10,-0.1) node[below] {$10$};
\draw [fill=xdxdff] (11,0) circle (0.6pt);
\draw[color=xdxdff] (11,-0.1) node[below] {$11$};	
\draw [fill=xdxdff] (12,0) circle (0.6pt);
\draw[color=xdxdff] (12,-0.1) node[below] {$12$};
\draw [fill=xdxdff] (13,0) circle (0.6pt);
\draw[color=xdxdff] (13,-0.1) node[below] {$13$};
\draw [fill=xdxdff] (14,0) circle (0.6pt);
\draw[color=xdxdff] (14,-0.1) node[below] {$14$};
\draw [fill=xdxdff] (15,0) circle (0.6pt);
\draw[color=xdxdff] (15,-0.1) node[below] {$15$};
\draw [fill=xdxdff] (16,0) circle (0.6pt);
\draw[color=xdxdff] (16,-0.1) node[below] {$16$};
\draw [fill=xdxdff] (17,0) circle (0.6pt);
\draw[color=xdxdff] (17,-0.1) node[below] {$17$};
\draw [line width=0.6pt] (1,4)-- (8,4);
\node  at (3bp,80bp) {\normalsize $v_1$};
\draw [line width=0.6pt] (1,3)-- (4,3);
\draw [line width=0.6pt] (9,3)-- (12,3);
\node  at (3bp,60bp) {\normalsize $v_2$};
\draw [line width=0.6pt] (1,2)-- (3,2);
\draw [line width=0.6pt] (9,2)-- (11,2);
\draw [fill=xdxdff] (5,2) circle (0.6pt);
\draw [fill=xdxdff] (13,2) circle (0.6pt);
\node  at (3bp,40bp) {\normalsize $v_3$};
\draw [line width=0.6pt] (1,1)-- (2,1);
\draw [line width=0.6pt] (9,1)-- (10,1);
\draw [line width=0.6pt] (14,1)-- (17,1);
\node  at (3bp,20bp) {\normalsize $v_4$};
\node (7) at (170bp,-11.0bp) [label=270:{{{\normalsize $V_{12}$}}}] {};

\end{scriptsize}
\end{tikzpicture}\vspace{-2cm}

\end{figure}
\vspace{0.2in}

\begin{figure}[h!]
\centering
\definecolor{xdxdff}{rgb}{0.49019607843137253,0.49019607843137253,1}
\begin{tikzpicture}[line cap=round,line join=round,>=triangle 45,x=0.7cm,y=0.7cm, scale=0.9]

\clip(-0.9,-4) rectangle (19,4.3);
\draw [line width=0.6pt] (1,0)-- (17,0);
\begin{scriptsize}
\draw [fill=xdxdff] (1,0) circle (0.6pt);
\draw[color=xdxdff] (1,-0.1) node[below] {$1$};
\draw [fill=xdxdff] (2,0) circle (0.6pt);
\draw[color=xdxdff] (2,-0.1) node[below] {$2$};
\draw [fill=xdxdff] (3,0) circle (0.6pt);
\draw[color=xdxdff] (3,-0.1) node[below] {$3$};
\draw [fill=xdxdff] (4,0) circle (0.6pt);
\draw[color=xdxdff] (4,-0.1) node[below] {$4$};
\draw [fill=xdxdff] (5,0) circle (0.6pt);
\draw[color=xdxdff] (5,-0.1) node[below] {$5$};
\draw [fill=xdxdff] (6,0) circle (0.6pt);
\draw[color=xdxdff] (6,-0.1) node[below] {$6$};
\draw [fill=xdxdff] (7,0) circle (0.6pt);
\draw[color=xdxdff] (7,-0.1) node[below] {$7$};
\draw [fill=xdxdff] (8,0) circle (0.6pt);
\draw[color=xdxdff] (8,-0.1) node[below] {$8$};
\draw [fill=xdxdff] (9,0) circle (0.6pt);
\draw[color=xdxdff] (9,-0.1) node[below] {$9$};
\draw [fill=xdxdff] (10,0) circle (0.6pt);
\draw[color=xdxdff] (10,-0.1) node[below] {$10$};
\draw [fill=xdxdff] (11,0) circle (0.6pt);
\draw[color=xdxdff] (11,-0.1) node[below] {$11$};	
\draw [fill=xdxdff] (12,0) circle (0.6pt);
\draw[color=xdxdff] (12,-0.1) node[below] {$12$};
\draw [fill=xdxdff] (13,0) circle (0.6pt);
\draw[color=xdxdff] (13,-0.1) node[below] {$13$};
\draw [fill=xdxdff] (14,0) circle (0.6pt);
\draw[color=xdxdff] (14,-0.1) node[below] {$14$};
\draw [fill=xdxdff] (15,0) circle (0.6pt);
\draw[color=xdxdff] (15,-0.1) node[below] {$15$};
\draw [fill=xdxdff] (16,0) circle (0.6pt);
\draw[color=xdxdff] (16,-0.1) node[below] {$16$};
\draw [fill=xdxdff] (17,0) circle (0.6pt);
\draw[color=xdxdff] (17,-0.1) node[below] {$17$};
\draw [line width=0.6pt] (1,4)-- (8,4);
\node  at (3bp,80bp) {\normalsize $v_1$};
\draw [line width=0.6pt] (1,3)-- (2,3);
\draw [line width=0.6pt] (9,3)-- (10,3);
\node  at (3bp,60bp) {\normalsize $v_2$};
\draw [fill=xdxdff] (1,2) circle (0.6pt);
\draw [fill=xdxdff] (3,2) circle (0.6pt);
\draw [fill=xdxdff] (9,2) circle (0.6pt);
\draw [fill=xdxdff] (11,2) circle (0.6pt);
\node  at (3bp,40bp) {\normalsize $v_3$};
\draw [line width=0.6pt] (4,1)-- (5,1);
\draw [line width=0.6pt] (12,1)-- (17,1);
\node  at (3bp,20bp) {\normalsize $v_4$};
\node (7) at (170bp,-11.0bp) [label=270:{{{\normalsize $V_{13}$}}}] {};

\end{scriptsize}
\end{tikzpicture}\vspace{-2cm}

\end{figure}


\begin{figure}[h!]
\centering
\definecolor{xdxdff}{rgb}{0.49019607843137253,0.49019607843137253,1}
\begin{tikzpicture}[line cap=round,line join=round,>=triangle 45,x=0.7cm,y=0.7cm, scale=0.9]

\clip(-0.9,-4) rectangle (19,4.3);
\draw [line width=0.6pt] (1,0)-- (13,0);
\begin{scriptsize}
\draw [fill=xdxdff] (1,0) circle (0.6pt);
\draw[color=xdxdff] (1,-0.1) node[below] {$1$};
\draw [fill=xdxdff] (2,0) circle (0.6pt);
\draw[color=xdxdff] (2,-0.1) node[below] {$2$};
\draw [fill=xdxdff] (3,0) circle (0.6pt);
\draw[color=xdxdff] (3,-0.1) node[below] {$3$};
\draw [fill=xdxdff] (4,0) circle (0.6pt);
\draw[color=xdxdff] (4,-0.1) node[below] {$4$};
\draw [fill=xdxdff] (5,0) circle (0.6pt);
\draw[color=xdxdff] (5,-0.1) node[below] {$5$};
\draw [fill=xdxdff] (6,0) circle (0.6pt);
\draw[color=xdxdff] (6,-0.1) node[below] {$6$};
\draw [fill=xdxdff] (7,0) circle (0.6pt);
\draw[color=xdxdff] (7,-0.1) node[below] {$7$};
\draw [fill=xdxdff] (8,0) circle (0.6pt);
\draw[color=xdxdff] (8,-0.1) node[below] {$8$};
\draw [fill=xdxdff] (9,0) circle (0.6pt);
\draw[color=xdxdff] (9,-0.1) node[below] {$9$};
\draw [fill=xdxdff] (10,0) circle (0.6pt);
\draw[color=xdxdff] (10,-0.1) node[below] {$10$};
\draw [fill=xdxdff] (11,0) circle (0.6pt);
\draw[color=xdxdff] (11,-0.1) node[below] {$11$};	
\draw [fill=xdxdff] (12,0) circle (0.6pt);
\draw[color=xdxdff] (12,-0.1) node[below] {$12$};
\draw [fill=xdxdff] (13,0) circle (0.6pt);
\draw[color=xdxdff] (13,-0.1) node[below] {$13$};
\draw [line width=0.6pt] (1,4)-- (8,4);
\node  at (3bp,80bp) {\normalsize $v_1$};
\draw [line width=0.6pt] (1,3)-- (4,3);
\draw [line width=0.6pt] (9,3)-- (12,3);
\node  at (3bp,60bp) {\normalsize $v_2$};
\draw [line width=0.6pt] (1,2)-- (3,2);
\draw [line width=0.6pt] (9,2)-- (11,2);
\draw [fill=xdxdff] (5,2) circle (0.6pt);
\draw [fill=xdxdff] (13,2) circle (0.6pt);
\node  at (3bp,40bp) {\normalsize $v_3$};
\draw [line width=0.6pt] (1,1)-- (2,1);
\draw [line width=0.6pt] (9,1)-- (10,1);
\node  at (3bp,20bp) {\normalsize $v_4$};
\node (7) at (130bp,-11.0bp) [label=270:{{{\normalsize $V_{14}$}}}] {};

\end{scriptsize}
\end{tikzpicture}\vspace{-2cm}

\end{figure}

\begin{figure}[h!]
\centering
\definecolor{xdxdff}{rgb}{0.49019607843137253,0.49019607843137253,1}
\begin{tikzpicture}[line cap=round,line join=round,>=triangle 45,x=0.7cm,y=0.7cm, scale=0.9]

\clip(-0.9,-4) rectangle (19,4.3);
\draw [line width=0.6pt] (1,0)-- (17,0);
\begin{scriptsize}
\draw [fill=xdxdff] (1,0) circle (0.6pt);
\draw[color=xdxdff] (1,-0.1) node[below] {$1$};
\draw [fill=xdxdff] (2,0) circle (0.6pt);
\draw[color=xdxdff] (2,-0.1) node[below] {$2$};
\draw [fill=xdxdff] (3,0) circle (0.6pt);
\draw[color=xdxdff] (3,-0.1) node[below] {$3$};
\draw [fill=xdxdff] (4,0) circle (0.6pt);
\draw[color=xdxdff] (4,-0.1) node[below] {$4$};
\draw [fill=xdxdff] (5,0) circle (0.6pt);
\draw[color=xdxdff] (5,-0.1) node[below] {$5$};
\draw [fill=xdxdff] (6,0) circle (0.6pt);
\draw[color=xdxdff] (6,-0.1) node[below] {$6$};
\draw [fill=xdxdff] (7,0) circle (0.6pt);
\draw[color=xdxdff] (7,-0.1) node[below] {$7$};
\draw [fill=xdxdff] (8,0) circle (0.6pt);
\draw[color=xdxdff] (8,-0.1) node[below] {$8$};
\draw [fill=xdxdff] (9,0) circle (0.6pt);
\draw[color=xdxdff] (9,-0.1) node[below] {$9$};
\draw [fill=xdxdff] (10,0) circle (0.6pt);
\draw[color=xdxdff] (10,-0.1) node[below] {$10$};
\draw [fill=xdxdff] (11,0) circle (0.6pt);
\draw[color=xdxdff] (11,-0.1) node[below] {$11$};	
\draw [fill=xdxdff] (12,0) circle (0.6pt);
\draw[color=xdxdff] (12,-0.1) node[below] {$12$};
\draw [fill=xdxdff] (13,0) circle (0.6pt);
\draw[color=xdxdff] (13,-0.1) node[below] {$13$};
\draw [fill=xdxdff] (14,0) circle (0.6pt);
\draw[color=xdxdff] (14,-0.1) node[below] {$14$};
\draw [fill=xdxdff] (15,0) circle (0.6pt);
\draw[color=xdxdff] (15,-0.1) node[below] {$15$};
\draw [fill=xdxdff] (16,0) circle (0.6pt);
\draw[color=xdxdff] (16,-0.1) node[below] {$16$};
\draw [fill=xdxdff] (17,0) circle (0.6pt);
\draw[color=xdxdff] (17,-0.1) node[below] {$17$};

\draw [line width=0.6pt] (1,4)-- (12,4);
\node  at (3bp,80bp) {\normalsize $v_1$};
\draw [line width=0.6pt] (1,3)-- (4,3);
\draw [line width=0.6pt] (13,3)-- (16,3);
\node  at (3bp,60bp) {\normalsize $v_2$};
\draw [line width=0.6pt] (1,2)-- (3,2);
\draw [line width=0.6pt] (13,2)-- (15,2);
\draw [fill=xdxdff] (5,2) circle (0.6pt);
\draw [fill=xdxdff] (17,2) circle (0.6pt);
\node  at (3bp,40bp) {\normalsize $v_3$};
\draw [line width=0.6pt] (1,1)-- (2,1);
\draw [line width=0.6pt] (13,1)-- (14,1);
\node  at (3bp,20bp) {\normalsize $v_4$};
\node (7) at (170bp,-11.0bp) [label=270:{{{\normalsize $V_{15}$}}}] {};

\end{scriptsize}
\end{tikzpicture}\vspace{-2cm}

\end{figure}

\begin{figure}[!h]
\centering
\definecolor{xdxdff}{rgb}{0.49019607843137253,0.49019607843137253,1}
\begin{tikzpicture}[line cap=round,line join=round,>=triangle 45,x=0.7cm,y=0.7cm, scale=0.9]

\clip(-0.9,-4) rectangle (19,4.3);
\draw [line width=0.6pt] (1,0)-- (17,0);
\begin{scriptsize}
\draw [fill=xdxdff] (1,0) circle (0.6pt);
\draw[color=xdxdff] (1,-0.1) node[below] {$1$};
\draw [fill=xdxdff] (2,0) circle (0.6pt);
\draw[color=xdxdff] (2,-0.1) node[below] {$2$};
\draw [fill=xdxdff] (3,0) circle (0.6pt);
\draw[color=xdxdff] (3,-0.1) node[below] {$3$};
\draw [fill=xdxdff] (4,0) circle (0.6pt);
\draw[color=xdxdff] (4,-0.1) node[below] {$4$};
\draw [fill=xdxdff] (5,0) circle (0.6pt);
\draw[color=xdxdff] (5,-0.1) node[below] {$5$};
\draw [fill=xdxdff] (6,0) circle (0.6pt);
\draw[color=xdxdff] (6,-0.1) node[below] {$6$};
\draw [fill=xdxdff] (7,0) circle (0.6pt);
\draw[color=xdxdff] (7,-0.1) node[below] {$7$};
\draw [fill=xdxdff] (8,0) circle (0.6pt);
\draw[color=xdxdff] (8,-0.1) node[below] {$8$};
\draw [fill=xdxdff] (9,0) circle (0.6pt);
\draw[color=xdxdff] (9,-0.1) node[below] {$9$};
\draw [fill=xdxdff] (10,0) circle (0.6pt);
\draw[color=xdxdff] (10,-0.1) node[below] {$10$};
\draw [fill=xdxdff] (11,0) circle (0.6pt);
\draw[color=xdxdff] (11,-0.1) node[below] {$11$};	
\draw [fill=xdxdff] (12,0) circle (0.6pt);
\draw[color=xdxdff] (12,-0.1) node[below] {$12$};
\draw [fill=xdxdff] (13,0) circle (0.6pt);
\draw[color=xdxdff] (13,-0.1) node[below] {$13$};
\draw [fill=xdxdff] (14,0) circle (0.6pt);
\draw[color=xdxdff] (14,-0.1) node[below] {$14$};
\draw [fill=xdxdff] (15,0) circle (0.6pt);
\draw[color=xdxdff] (15,-0.1) node[below] {$15$};
\draw [fill=xdxdff] (16,0) circle (0.6pt);
\draw[color=xdxdff] (16,-0.1) node[below] {$16$};
\draw [fill=xdxdff] (17,0) circle (0.6pt);
\draw[color=xdxdff] (17,-0.1) node[below] {$17$};
\draw [line width=0.6pt] (1,4)-- (8,4);
\node  at (3bp,80bp) {\normalsize $v_1$};
\draw [line width=0.6pt] (1,3)-- (2,3);
\draw [line width=0.6pt] (9,3)-- (14,3);
\node  at (3bp,60bp) {\normalsize $v_2$};
\draw [line width=0.6pt] (9,2)-- (13,2);
\draw [fill=xdxdff] (1,2) circle (0.6pt);
\draw [fill=xdxdff] (3,2) circle (0.6pt);
\draw [fill=xdxdff] (15,2) circle (0.6pt);
\node  at (3bp,40bp) {\normalsize $v_3$};
\draw [line width=0.6pt] (4,1)-- (5,1);
\draw [line width=0.6pt] (16,1)-- (17,1);
\node  at (3bp,20bp) {\normalsize $v_4$};
\node (7) at (170bp,-11.0bp) [label=270:{{{\normalsize $V_{16}$}}}] {};

\end{scriptsize}
\end{tikzpicture}\vspace{-2cm}

\end{figure}

%
%


\FloatBarrier       
\clearpage          
\begingroup         
\small              
\renewcommand{\refname}{\normalsize\bfseries References}


\end{document}